\newtheorem{theorem}{Theorem}[section]
\newtheorem{lemma}[theorem]{Lemma}
\theoremstyle{definition}
\newtheorem{definition}[theorem]{Definition}
\newtheorem{example}[theorem]{Example}
\theoremstyle{remark}
\newtheorem{remark}[theorem]{Remark}
\renewcommand{\theequation}{\thesection.\arabic{theorem}}
\newtheorem{corollary}[theorem]{Corollary}
\newtheorem{examples}[theorem]{Examples}
\newtheorem{proposition}[theorem]{Proposition}
\newtheorem{question}[theorem]{Question}
\newtheorem{paragraf}[theorem]{}
\newtheorem{conjecture}[theorem]{Conjecture}
\newtheorem{notation}[theorem]{Notation}
\newtheorem{problem}[theorem]{Problem}
\newtheorem{thevarthm}[theorem]{\varthmname}
\newenvironment{varthm*}[1]{\trivlist\item[]{\bf #1.}\it}{\endtrivlist}
\renewcommand\emptyset{\varnothing}  % from amssymb
\renewcommand\ge{\geqslant}  % from amssymb
\renewcommand\le{\leqslant}  % from amssymb
\renewcommand\geq{\geqslant}  % from amssymb
\renewcommand\leq{\leqslant}  % from amssymb
\renewcommand\epsilon{\varepsilon}
\renewcommand\phi{\varphi}
\renewcommand\tilde{\widetilde}
\newcommand\be{\begin{eqnarray*}}
\newcommand\ee{\end{eqnarray*}}
\renewenvironment{cases}{\left\{\begin{array}{@{}l@{\quad}l}}{\end{array}\right.}
\newcommand\eqnref[1]{(\ref{#1})}
\newcommand\eps{\varepsilon}
\renewcommand\P{\mathbb P}
\newcommand\N{\mathbb N}
\newcommand\Q{\mathbb Q}
\newcommand\R{\mathbb R}
\newcommand\Z{\mathbb Z}
\newcommand\C{\mathbb C}
\newcommand\lra{\longrightarrow}
\newcommand\newop[2]{\def#1{\mathop{\rm #2}\nolimits}}
\newop\upper{upper}
\newop\SB{SB}
\newcommand\calo{{\mathcal O}}
\newcommand\cali{{\mathcal I}}
\newcommand\tensor{\otimes}
\newcommand\mylabel[1]{\label{#1}}
\newcommand\mult{{\rm mult}}
\newcommand\epsmov{\eps_{\rm mov}}
\def\ben{\begin{eqnarray}}
\def\een{\end{eqnarray}}
\def\binom#1#2{{#1\choose #2}}
\def\eatit#1{}
\newop\Ass{Ass}
\newop\codim{codim}
\newop\reg{reg}
\begin{document}

\title{A primer on Seshadri constants}

\author[Bauer]{Thomas Bauer}
\address{Fachbereich Mathematik und Informatik,
Philipps-Universit\"at Marburg, Hans-Meerwein-Stra\ss{}e, D-35032
Marburg, Germany}
\email{tbauer@Mathematik.Uni-Marburg.de }
\thanks{This work has been partially supported by
   the SFB/TR 45 ``Periods, moduli spaces and arithmetic of algebraic
   varieties'', in particular through supporting the workshop in Essen
   out of which this paper outgrew.}

\author[Di Rocco]{Sandra Di Rocco}
\address{Department of Mathematics, KTH, 100 44 Stockholm, Sweden}
\email{dirocco@math.kth.se}
\thanks{The second named author was partially supported by Vetenskapsr{\aa}det's grant
   NT:2006-3539.}

\author[Harbourne]{Brian Harbourne}
\address{Department of Mathematics, University of Nebraska--Lincoln, Lincoln, NE 68588-0130 USA}
\email{bharbourne1@math.unl.edu}

\author[Kapustka]{Micha\l \ Kapustka}
\address{Institute of Mathematics UJ, Reymonta 4, 30-059 Krak\'ow, Poland}
\email{michal.kapustka@im.uj.edu.pl}
\thanks{The fourth and the last two named authors were partially supported by
   a MNiSW grant N~N201 388834.}

\author[Knutsen]{Andreas Knutsen}
\address{Department of Mathematics, University of Bergen, Johannes Brunsgate 12, 5008 Bergen, Norway}
\email{Andreas.Knutsen@math.uib.no}

\author[Syzdek]{Wioletta Syzdek}
\address{Instytut Matematyki AP, ul. Podchor\c a\.zych 2, PL-30-084 Krak\'ow, Poland}
\curraddr{Mathematisches Institut, Universit\"at Duisburg-Essen, 45117 Essen, Germany}
\email{wioletta.syzdek@uni-due.de}

\author[Szemberg]{Tomasz Szemberg}
\address{Instytut Matematyki AP, ul. Podchor\c a\.zych 2, PL-30-084 Krak\'ow, Poland}
\curraddr{Instytut Matematyczny PAN, ul. \'Sniadeckich 8, PL-00-956 Warszawa, Poland}
\email{tomasz.szemberg@uni-due.de}

\subjclass{Primary 14C20; Secondary 14E25, 14J26, 14M25, 13A10}
\date{September 18, 2008}

\dedicatory{This paper is dedicated to Andrew J. Sommese.}

\keywords{Seshadri constants, linear series, symbolic powers}

\begin{abstract}
   Seshadri constants express the so called local positivity of a line
   bundle on a projective variety. They were introduced in
   \cite{Dem92} by Demailly. The original hope of using them towards
   a proof of the Fujita conjecture was too optimistic, but it soon
   became clear that they are interesting invariants quite in their own
   right. Lazarsfeld's book \cite{PAG} contains a whole chapter
   devoted to local positivity and serves as a very enjoyable
   introduction to Seshadri constants. Since this book has appeared,
   the subject witnessed quite a bit of development. It is the aim
   of these notes to give an account of recent progress as well as to
   discuss many open questions and provide some examples. The idea
   of writing these notes
   occurred during the workshop on Seshadri constants held in Essen
   12-15 February 2008.
\end{abstract}

\maketitle

\tableofcontents
\section{Definitions}

   We begin by recalling the Seshadri criterion for ampleness
   \cite[Theorem 1.7]{Har70},
   as this is where the whole story begins.
\begin{theorem}[Seshadri criterion]\mylabel{Seshcrit}
   Let $X$ be a smooth projective variety and $L$ be a line bundle
   on $X$. Then $L$ is ample if and only if there exists a positive number
   $\eps$ such that for all points $x$ on $X$ and all (irreducible) curves $C$
   passing through $x$ one has
   $$L\cdot C\geq \eps\cdot\mult_xC.$$
\end{theorem}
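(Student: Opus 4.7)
The plan is to prove the two directions separately: the forward direction follows quickly from very ampleness, while the converse uses the Nakai--Moishezon criterion together with a blow-up and nefness computation.

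For $L$ ample $\Rightarrow$ the Seshadri inequality, choose $m \geq 1$ so that $mL$ is very ample, giving an embedding $\phi \colon X \hookrightarrow \P^N$. For any irreducible curve $C \subset X$ through a point $x$, one has $mL \cdot C = \deg \phi(C)$; intersecting $\phi(C)$ with a generic hyperplane through $\phi(x)$ yields $\deg \phi(C) \geq \mult_{\phi(x)} \phi(C) = \mult_x C$. So $\eps = 1/m$ works uniformly in $x$ and $C$.

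For the converse, assume the Seshadri condition with constant $\eps > 0$. By Nakai--Moishezon it suffices to show $L^{\dim V} \cdot V > 0$ for every irreducible closed subvariety $V \subseteq X$. Fix such a $V$ of dimension $d$, pick a point $x \in V$, and let $\pi \colon \tilde X \to X$ be the blow-up at $x$ with exceptional divisor $E$ and strict transform $\tilde V$ of $V$. The Seshadri hypothesis implies that the $\R$-divisor $\pi^* L - t E$ is nef on $\tilde X$ for every $t \in [0, \eps]$: for an irreducible curve $C \subset \tilde X$ not contained in $E$, with $\gamma := \pi_* C$, one computes $(\pi^* L - tE) \cdot C = L \cdot \gamma - t \mult_x \gamma \geq 0$, while for $C \subset E$ one has $(\pi^* L - tE) \cdot C = t \deg C \geq 0$. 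Kleiman's positivity theorem for nef classes then yields $(\pi^* L - t E)^d \cdot \tilde V \geq 0$. Expanding as a polynomial in $t$, the mixed coefficients $\pi^* L^{d-k} \cdot E^k \cdot \tilde V$ with $0 < k < d$ vanish by the projection formula (since $E^k \cdot \tilde V$ is supported on $E \cap \tilde V$, which $\pi$ contracts to $x$), leaving only $L^d \cdot V + (-t)^d E^d \cdot \tilde V \geq 0$. Using the standard computation $E^d \cdot \tilde V = (-1)^{d-1} \mult_x V$, this reads $L^d \cdot V \geq t^d \mult_x V$; letting $t \to \eps$ gives $L^d \cdot V \geq \eps^d \mult_x V > 0$, completing the Nakai--Moishezon verification.

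The chief technical point is the formula $E^d \cdot \tilde V = (-1)^{d-1} \mult_x V$, which for $V$ smooth at $x$ reduces to the well-known computation $(E|_E)^{d-1} = (-1)^{d-1}$ on $E \cong \P^{n-1}$, and in general uses that $E \cap \tilde V$ is the projectivized tangent cone of $V$ at $x$, a subvariety of $E$ of degree $\mult_x V$.
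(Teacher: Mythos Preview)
Your proof is correct and follows the classical argument. Note, however, that the paper does not actually supply a proof of this theorem: it merely quotes the Seshadri criterion from \cite[Theorem~1.7]{Har70} as the starting point for defining Seshadri constants. So there is no ``paper's own proof'' to compare with; what you have written is essentially the standard proof found in Hartshorne's \emph{Ample subvarieties} (and reproduced in \cite[Theorem~1.4.13]{PAG}): the forward direction via very ampleness, and the converse via Nakai--Moishezon combined with the nefness of $\pi^*L-\eps E$ on the blow-up. Your handling of the mixed terms through the projection formula and of the top term via the projectivized tangent cone is the expected one. One cosmetic remark: since you may choose $x$ to be a \emph{smooth} point of $V$, you can sidestep the general tangent-cone computation and simply use $\mult_xV=1$; but the argument as written is fine.
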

\begin{remark}[Insufficiency of positive intersections with curves]\rm
   It is not enough to assume merely that the intersection of $L$
   with every curve is positive. In other words it is not enough to
   assume that $L$ restricts to an ample line bundle on every curve
   $C\subset X$. Counterexamples were constructed by Mumford and
   Ramanujam \cite[Examples 10.6 and 10.8]{Har70}.
\end{remark}
   It is natural to ask for optimal numbers $\eps$ in Theorem
   \ref{Seshcrit}. This leads to the following definition due to
   Demailly \cite{Dem92}.
\begin{definition}[Seshadri constant at a point]\rm\mylabel{defDem}
   Let $X$ be a smooth projective variety and $L$ a nef line bundle
   on $X$. For a fixed point $x\in X$ the real number
   $$\eps(X,L;x):=\inf\frac{L\cdot C}{\mult_xC}$$
   is the \emph{Seshadri constant of $L$ at $x$} (the
   infimum being taken over all irreducible curves $C$ passing through
   $x$).
\end{definition}
\begin{definition}[Seshadri curve]\mylabel{sescur}\rm
   We say that a curve $C$ is a \emph{Seshadri curve of $L$ at $x$}
   if $C$ computes $\eps(X,L;x)$, i.e., if
   $$\eps(X,L;x)=\frac{L\cdot C}{\mult_xC}.$$
\end{definition}
   It is not known if Seshadri curves exist in general.

   Definition \ref{defDem} extends naturally so that we can define
   Seshadri constants for an arbitrary subscheme $Z\subset
   X$. To this end let $f:Y\lra X$ be the blowup of $X$ along $Z$
   with the exceptional divisor $E$.
\begin{definition}[Seshadri constant at a subscheme]\rm\mylabel{defsubscheme}
   The \emph{Seshadri constant} of $L$ at $Z$ is the real number
\addtocounter{theorem}{1}
   \ben\label{defnef}
   \eps(X,L;Z):=\sup\left\{\lambda\,:\, f^*L-\lambda E
   \mbox{ is ample on }Y\right\}.
   \een
\end{definition}
\begin{remark}\rm
   If $Z$ is a point, then both definitions agree. The argument is
   given in \cite[Proposition 5.1.5]{PAG}.
\end{remark}
\begin{remark}[Relation to the $s$-invariant]\rm
   Note that $\eps(X,L;Z)$ is the reciprocal of the $s$-invariant $s_L(\cali_Z)$ of
   the ideal sheaf $\cali_Z$ of $Z$ with respect to $L$ as defined
   in \cite[Definition 5.4.1]{PAG}
\end{remark}
\begin{definition}[Multi-point Seshadri constant]\mylabel{defmultisesh}
   If $Z$ is a reduced subscheme
   supported at $r$ distinct points
   $x_1,\dots,x_r$ of $X$, then the number
   $\eps(X,L;x_1,\dots,x_r)$ is called the \emph{multi-point
   Seshadri constant} of $L$ at the $r$-tuple of points $x_1,\dots,x_r$.
\end{definition}
   There is yet another variant of Definition \ref{defDem} which
   instead of curves takes into account higher dimensional
   subvarieties of $X$ passing through a given point $x\in X$.
\begin{definition}[Seshadri constants via higher dimensional subvarieties]\rm\mylabel{defhighdim}
   Let $X$ be a smooth projective variety, $L$ a nef line bundle on
   $X$ and $x\in X$ a point. The real number
   $$\eps_d(X,L;x):=\inf\left(\frac{L^d\cdot V}{\mult_xV}\right)^{\frac1d}$$
   is the $d$-\emph{dimensional Seshadri constant} of $L$ at $x$
   (the infimum being taken over all subvarieties $V\subset X$ of
   dimension $d$ such that $x\in V$).
\end{definition}
\begin{remark}\rm
   Note that the above definition agrees for $d=1$ with Definition
   \ref{defDem}, so that $\eps(X,L;x)=\eps_1(X,L;x)$.
\end{remark}
   In the above definitions we suppress the variety $X$ if it is
   clear from the context where the Seshadri constant is computed,
   i.e., we write $\eps(L;x)=\eps(X,L;x)$ etc.

   There are another three interesting numbers which can be defined taking
   infimums over various spaces of parameters.
\begin{definition}[Seshadri constants of a line bundle, a point and a varie\-ty]\rm\mylabel{infs}
\mbox{}
\begin{itemize}
\item[(a)]   The number
   $$\eps(X,L):=\inf_{x\in X}\eps(X,L;x)$$
   is the \emph{Seshadri constant of the line bundle} $L$.
\item[(b)]
   The number
   $$\eps(X;x):=\inf_{L \mbox{ \scriptsize ample}}\eps(X,L;x)$$
   is the \emph{Seshadri constant of the point} $x\in X$.
\item[(c)]
   The number
   $$\eps(X):=\inf_{L \mbox{ \scriptsize ample}}\eps(X,L)=\inf_{x\in X}\eps(X;x)$$
   is the \emph{Seshadri constant of the variety} $X$.
\end{itemize}
\end{definition}
\begin{remark}[Reformulation of Seshadri criterion]\rm
   Theorem \ref{Seshcrit} asserts now simply that a line bundle $L$ is
   ample if and only if its Seshadri constant is positive: $\eps(X,L)>0$.
\end{remark}
   So far we defined Seshadri constants for ample or at least nef
   line bundles. Recently Ein, Lazarsfeld, Mustata, Nakamaye and
   Popa \cite{RV} found a meaningful way to extend the notion of Seshadri
   constants to big line bundles.

   To begin with, we recall the notion of augmented base locus. For this purpose it is
   convenient to pass to $\Q$-divisors.
\begin{definition}[Augmented base locus]\rm\mylabel{augmentedbs}
   Let $D$ be a $\Q$-divisor. The \emph{augmented base locus} of
   $D$ is
   $$B_+(D):=\bigcap_A\SB(D-A),$$
   where the intersection is taken over all sufficiently small ample
   $\Q$-divisors $A$ and $\SB(D-A)$ is the stable base locus of
   $D-A$, i.e., the common base locus of all linear series $|m(D-A)|$
   for all sufficiently divisible $m$. (In fact $B_+(D)=\SB(D-A)$
   for any sufficiently small ample $A$.)
\end{definition}
\begin{remark}[Numerical nature of augmented base loci]\rm
   Contrary to the stable base loci, the augmented base loci depend
   only on the numerical class of $D$ \cite[Proposition 1.4]{ELMNP06}.
\end{remark}
   Intuitively, the augmented base locus of a line bundle $L$ is the
   locus where $L$ has no local positivity. This is reflected by
   the following definition.
\begin{definition}[Moving Seshadri constant]\label{moving_sc}\rm
   Let $X$ be a smooth projective variety and $L=\calo_{X}(D)$ a line bundle on
   $X$. The real number
   $$\epsmov(L;x):=\left\{
     \begin{array}{cl}
        \sup_{f^*D=A+E}\eps(A;x) & \mbox{ if } x \mbox{ is not in }
        B_+(L),\\
        0 & \mbox{ otherwise}.
     \end{array}
     \right.$$
   is the \emph{moving Seshadri constant} of $L$ at $x$. The
   supremum in the definition is taken over all projective
   morphisms $f:X'\mapsto X$, with $X'$ smooth, which are
   isomorphisms over a neighborhood of $x$ and all decompositions
   $f^*(D)=A+E$ such that $E$ is an effective
   $\Q$-divisors and $A=f^*(D)-E$ is ample.
\end{definition}
   Note that if $L$ is not big, then $\epsmov(L;x)=0$ for every
   point $x\in X$, so the moving Seshadri constants are meaningful
   for big divisors only.
\begin{remark}[Consistency of definitions]\rm
   If $L$ is nef, then the above definition agrees with Definition
   \ref{defDem}. One can also state the other definitions of this section
   in the moving context. This is left to the reader.
\end{remark}
   We conclude with yet another remark relating moving Seshadri constants
   to Zariski decompositions on surfaces.
   The definition of the Zariski decomposition is provided by the following
   theorem, see \cite{Zar62} and \cite{Bau08}.
\begin{theorem}[Zariski decomposition]\mylabel{zardec}
   Let $D$ be an effective $\Q$-divisor on a~smooth projective surface $X$.
   Then there are uniquely determined effective (possibly zero) $\Q$-divisors
   $P$ and $N$ with $D\; =\; P\; +\; N$ such that:
   \begin{itemize}
      \item[(i)] $P$ is nef ;
      \item[(ii)] $N$ is zero or has negative definite intersection matrix ;
      \item[(iii)] $P\cdot C = 0$ for every irreducible component $C$ of $N$.
   \end{itemize}
\end{theorem}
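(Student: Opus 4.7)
My plan is to prove uniqueness and existence separately.

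\textbf{Uniqueness.} Suppose $D=P+N=P'+N'$ are two decompositions satisfying (i)--(iii), and set $M:=N'-N=P-P'$. By (iii), $P\cdot C=0$ for every component $C$ of $N$, and since $P'$ is nef, $M\cdot C=-P'\cdot C\le 0$ for every $C\subseteq\text{supp}(N)$. Symmetrically $M\cdot C\ge 0$ for every $C\subseteq\text{supp}(N')$. Decompose $M=M^+-M^-$ with $M^\pm$ effective and of disjoint support; then $\text{supp}(M^+)\subseteq\text{supp}(N')$, and
\[
(M^+)^2 \;=\; M^+\cdot M+M^+\cdot M^- \;\ge\; 0,
\]
the second summand because distinct irreducible components meet non-negatively, the first because $M^+$ is supported where $M\cdot C\ge 0$. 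But $\text{supp}(M^+)\subseteq\text{supp}(N')$ carries a negative-definite intersection matrix by (ii), which forces $M^+=0$. Symmetrically $M^-=0$, and hence $N=N'$, $P=P'$.

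\textbf{Existence.} The strategy is to locate $\text{supp}(N)$ among the irreducible components of $D$. Consider the subsets $S$ of components of $D$ whose intersection matrix $A_S:=(C_i\cdot C_j)_{C_i,C_j\in S}$ is negative definite; for each such $S$, the system $\sum_j a_j(C_i\cdot C_j)=D\cdot C_i$ has a unique solution, and I call $S$ \emph{admissible} if $N_S:=\sum a_j C_j$ is effective. The empty set is admissible. I would then choose a maximal admissible $S$ and set $N:=N_S$, $P:=D-N$. Properties (ii) and (iii) hold by construction, and $N$ is effective. If $P$ fails to be nef, say $P\cdot\Gamma<0$ for some irreducible curve $\Gamma$, then $\Gamma\subseteq\text{supp}(D)$ with $\Gamma^2<0$ and $\Gamma\notin S$; the goal is then to show $S\cup\{\Gamma\}$ is admissible, contradicting maximality.

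The main obstacle is precisely this enlargement step. Writing $N_{S\cup\{\Gamma\}}=N_S+\lambda\Gamma+\Delta$ with $\Delta$ supported on $S$, the defining equations unwind to $\Delta\cdot C_i=-\lambda(\Gamma\cdot C_i)$ on $S$ together with $\lambda(\Gamma^2-b^\top A_S^{-1}b)=P\cdot\Gamma$, where $b_i:=\Gamma\cdot C_i\ge 0$. The crucial algebraic input is that, since $-A_S$ is a symmetric positive definite matrix with non-positive off-diagonal entries, it is a Stieltjes matrix, so $-A_S^{-1}$ has non-negative entries. This yields $b^\top A_S^{-1}b\le 0$, making the Schur complement $\Gamma^2-b^\top A_S^{-1}b$ strictly negative and establishing negative definiteness of $A_{S\cup\{\Gamma\}}$. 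The same Stieltjes property forces $\Delta=-\lambda A_S^{-1}b\ge 0$, while $\lambda=P\cdot\Gamma/(\Gamma^2-b^\top A_S^{-1}b)>0$ because numerator and denominator are both negative. Hence $N_{S\cup\{\Gamma\}}\ge 0$ and $S\cup\{\Gamma\}$ is admissible, the desired contradiction. Isolating the Stieltjes fact $-A_S^{-1}\ge 0$ cleanly is the delicate point; it is the algebraic shadow of the Hodge index theorem's signature restriction on the N\'eron--Severi lattice and is what ultimately confines the theorem to surfaces.
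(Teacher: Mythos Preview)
The paper does not actually prove this theorem; it merely states it as background and refers to \cite{Zar62} and \cite{Bau08}. Your argument is along classical lines and is essentially correct, with one omission worth flagging.

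You assert that if $P\cdot\Gamma<0$ then $\Gamma\subseteq\operatorname{supp}(D)$ with $\Gamma^2<0$, and the theorem also requires $P$ to be effective; neither is justified in your write-up. At the intermediate stage you only know $N_S\ge 0$, not $N_S\le D$, so $P=D-N_S$ is not obviously effective. The missing step is: for any admissible $S$, writing $D=D_S+D'$ with $D_S$ supported on $S$, the defining equations give $(D_S-N_S)\cdot C_i=-D'\cdot C_i\le 0$ for every $C_i\in S$; then the same decompose-into-positive-and-negative-parts trick you used for uniqueness (together with negative definiteness of $A_S$) forces $D_S-N_S\ge 0$, hence $P\ge 0$. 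Once that is in hand, $P\cdot\Gamma<0$ genuinely forces $\Gamma$ to be a component of the effective divisor $P\le D$, and then $\Gamma^2<0$ follows since the remaining part of $P$ meets $\Gamma$ non-negatively. With this filled in, both the enlargement step and the effectivity of $P$ in the final decomposition are secured, and your proof goes through.
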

\begin{remark}[Moving Seshadri constants and Zariski decompositions]\rm
   Let $L=\calo_X(D)$ be a big line bundle on a smooth projective surface $X$
   and let $D=P+N$ be the Zariski decomposition of $D$, then
   $$\eps_{mov}(L;x)=\eps(P;x)\;.$$
\end{remark}
\begin{proof}
   First of all recall that one has
\addtocounter{theorem}{1}
   \begin{equation}\label{zd_sections}
   H^0(mL)=H^0(mP)
   \end{equation}
   for all $m$ sufficiently divisible.
   Then \eqnref{nef_jets} relates $\eps(P;x)$ to the number
   of jets generated asymptotically by $P$ at $x$. The same
   relation holds for moving Seshadri constants by \cite[Proposition 6.6]{RV}.
   Taking \eqnref{zd_sections} into account we have
   $$\eps_{mov}(L;x)=\sup_m \frac{s(mL,x)}{m}=\sup_m \frac{s(mP,x)}{m}=\eps(P;x).$$
\end{proof}
%*****************************************************************************

\begingroup

\section{Basic properties}

\setcounter{theorem}{0}
\renewcommand{\thetheorem}{\thesubsection.\arabic{theorem}}
\renewcommand{\theequation}{\thesubsection.\arabic{theorem}}

\subsection{Upper bounds and submaximal curves}

   Since Seshadri constants are in particular defined by a nefness condition, it is
   easy to come up with an upper bound using Kleiman's
   criterion \cite[Theorem 1.4.9]{PAG}. For
   $0$-dimensional reduced subschemes we have the following result.
\begin{proposition}[Upper bounds]\mylabel{upperbound}
   Let $X$ be a smooth projective variety of dimension $n$ and $L$ a
   nef line bundle on $X$. Let $x_1,\dots,x_r$ be $r$ distinct
   points on $X$, then
   $$\eps(X,L;x_1,\dots,x_r)\leq\sqrt[n]{\frac{L^n}{r}}.$$
   In particular for a single point $x$ we always have
   $$\eps(X,L;x)\leq\sqrt[n]{L^n}.$$
\end{proposition}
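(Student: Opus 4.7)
The plan is to pass to the blow-up and apply Kleiman's ampleness criterion to the tautological nef $\Q$-divisor appearing in Definition \ref{defsubscheme}. Let $f : Y \lra X$ be the blow-up of $X$ at the reduced subscheme $Z = \{x_1, \dots, x_r\}$, with exceptional divisors $E_1, \dots, E_r$, and set $E = E_1 + \cdots + E_r$. By definition
$$\eps(X,L;x_1,\dots,x_r) = \sup\set{\lambda \with f^*L - \lambda E \text{ is ample on } Y},$$
so it suffices to show that whenever $f^*L - \lambda E$ is ample, one has $\lambda^n \le L^n/r$.

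The key computation is to expand $(f^*L - \lambda E)^n$ in the Chow ring of $Y$. The cross terms simplify dramatically because of the projection-formula-type identities for blow-ups of smooth points: for each $i$ one has $f^*L \cdot E_i = 0$ (since $E_i$ is contracted by $f$), and $E_i \cdot E_j = 0$ for $i \ne j$ (since the exceptional divisors are disjoint). Hence all mixed terms $(f^*L)^{n-k} \cdot E^k$ with $0 < k < n$ vanish, and $E^n = \sum_i E_i^n$. Using the standard fact that the exceptional divisor of the blow-up of a smooth point on an $n$-fold satisfies $E_i^n = (-1)^{n-1}$, one obtains
$$(f^*L - \lambda E)^n \; = \; L^n - \lambda^n \cdot r.$$

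To finish, I invoke Kleiman's criterion: an ample divisor has strictly positive top self-intersection. So for every $\lambda$ below the supremum in Definition \ref{defsubscheme}, we have $L^n - \lambda^n r > 0$, i.e.\ $\lambda < \sqrt[n]{L^n/r}$; taking the supremum yields the claimed bound. The single-point case is the specialization $r=1$.

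There is no real obstacle here, since everything reduces to standard intersection-theoretic identities on the blow-up; the only point requiring a little care is the sign computation $E_i^n = (-1)^{n-1}$, which comes from $\calo_Y(E_i)|_{E_i} \cong \calo_{\P^{n-1}}(-1)$. One could alternatively derive the single-point statement from the curve-based Definition \ref{defDem} by intersecting with a general complete intersection curve cut out by $|mL|$, but the blow-up argument handles both the single-point and multi-point cases uniformly.
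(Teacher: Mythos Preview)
Your argument is correct and follows exactly the approach of the paper: blow up at the $r$ points, use the definition \eqnref{defnef}, and note that $(f^*L-\eps E)^n\ge 0$ forces $\eps\le\sqrt[n]{L^n/r}$. You have simply spelled out the intersection computation $(f^*L-\lambda E)^n=L^n-\lambda^n r$ that the paper leaves implicit.
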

\begin{proof}
   Let $f:Y\lra X$ be the blowup $x_1,\dots,x_r$. Then the
   exceptional divisor $E=E_1+\dots +E_r$ is the sum of disjoint
   exceptional divisors over each of the points. By \eqnref{defnef}
   we must have $(f^*L-\eps(X,L;x_1,\dots,x_r)E)^n\geq 0$, and the claim
   follows.
\end{proof}
   The above proposition leads in a natural manner to the following
   definition.
\begin{definition}[Submaximal Seshadri constants]\rm
   We say that the Seshadri constant $\eps(X,L;x)$ is
   \emph{submaximal} if the strict inequality holds
   $$\eps(X,L;x)<\sqrt[n]{L^n}\;.$$
\end{definition}
   The above definition is paralleled by the following one.
\begin{definition}[Submaximal curves]\label{submaxcur}\rm
   Let $X$ be a smooth projective surface and $L$ an ample line
   bundle on $X$. We say that $C\subset X$ is a \emph{submaximal
   curve} (at $x\in X$ with respect to $L$) if
   $$\frac{L\cdot C}{\mult_x C}<\sqrt{L^2}\;.$$
   If only the weak inequality holds for $C$, then we call $C$ a
   \emph{weakly-submaximal} curve.\\
\end{definition}
\begin{remark}\label{sub-sur-rat}\rm
   For surfaces submaximal Seshadri constants are always computed
   by Seshadri curves, see \cite[Proposition 1.1]{BauSze08}. In particular they are rational numbers.
\end{remark}
   In general we have the following restriction on possible values
   of Seshadri constants \cite[Prop. 4]{Ste98}, which is
   a direct consequence of the Nakai-Moishezon criterion for $\R$-divisors
   \cite{Cam-Pet90}.
\begin{theorem}[Submaximal Seshadri constants are roots] \label{thm:submax}
   Let $X$ be an $n$-dimensional smooth projective variety,
   $L$ an ample line bundle on $X$ and $x$ a point of $X$.

   If $\varepsilon(L,x)$ is submaximal, that is,
   $\varepsilon(L,x) < \sqrt[n]{L^n}$, then it is a $d$-th root of a~rational number,
   for some $d$ with $1 \leq d \leq n-1$.
\end{theorem}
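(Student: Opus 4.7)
The plan is to realize $\varepsilon := \varepsilon(L,x)$ as the boundary value of the nef cone on the blowup at $x$. Let $f\colon Y \to X$ be the blowup of $X$ at $x$ with exceptional divisor $E$. By Definition \ref{defsubscheme}, the $\R$-divisor $D_\varepsilon := f^*L - \varepsilon E$ is nef (a limit of the ample classes $f^*L - \lambda E$ for $\lambda < \varepsilon$) but not ample (otherwise $\varepsilon$ would fail to be the supremum). Applying the Nakai-Moishezon criterion for $\R$-divisors \cite{Cam-Pet90} to $D_\varepsilon$ produces an irreducible subvariety $V \subseteq Y$ of some dimension $d$, $1 \leq d \leq n$, with $D_\varepsilon^d \cdot V = 0$.

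Two preliminary reductions pin down $d$ and the position of $V$. If $d = n$ then $V = Y$ and $D_\varepsilon^n = L^n - \varepsilon^n = 0$, contradicting submaximality. If $V \subseteq E \cong \P^{n-1}$, then $(f^*L)|_V$ is trivial while $E|_V$ is a negative multiple of the hyperplane class, so $D_\varepsilon^d \cdot V = (-\varepsilon)^d E^d \cdot V = \varepsilon^d \deg V \neq 0$, again a contradiction. Hence $1 \leq d \leq n-1$ and $V \not\subseteq E$.

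The key point is that most terms in the binomial expansion
$$0 \;=\; D_\varepsilon^d \cdot V \;=\; \sum_{k=0}^d \binom{d}{k}(-\varepsilon)^k (f^*L)^{d-k} E^k \cdot V$$
vanish. For $1 \leq k \leq d-1$ the class $E^k \cdot V$ is represented by a positive-dimensional cycle supported in $E$, and the projection formula gives
$$(f^*L)^{d-k} \cdot E^k \cdot V \;=\; L^{d-k} \cdot f_*(E^k \cdot V) \;=\; 0,$$
because $f_*$ collapses a positive-dimensional cycle concentrated in $E$ to zero. Only the two extreme terms survive, leaving $L^d \cdot f_*V + (-\varepsilon)^d E^d \cdot V = 0$. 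Standard blowup intersection theory evaluates $E^d \cdot V = (-1)^{d-1}\mult_x f(V) \in \Z$, while $(f^*L)^d \cdot V = L^d \cdot f(V) \in \Z$ by the projection formula; both are nonzero, since $L$ is ample and $f(V)$ must pass through $x$ (else $E \cdot V = 0$ would force $L^d \cdot f(V) = 0$). Solving exhibits $\varepsilon^d$ as a rational number, as claimed. The hard part will be justifying the vanishing of the cross terms via the projection formula together with the computation $E^d \cdot V = \pm\mult_x f(V)$; everything else is a direct consequence of the $\R$-version of Nakai-Moishezon and the very definition of the Seshadri constant.
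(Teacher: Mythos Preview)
Your argument is correct and is exactly the approach the paper has in mind: the paper does not spell out a proof but simply attributes the result to Steffens and says it is ``a direct consequence of the Nakai--Moishezon criterion for $\R$-divisors \cite{Cam-Pet90}.'' You have supplied precisely those details---passing to the blowup, using Campana--Peternell to find a subvariety $V$ with $D_\varepsilon^{\dim V}\cdot V=0$, ruling out $V=Y$ and $V\subseteq E$, and then reading off $\varepsilon^d\in\Q$ from the two surviving terms of the binomial expansion.
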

   In particular, it might happen that a Seshadri constant is computed by a higher dimensional subscheme.
   It is interesting to note that $d$-dimensional Seshadri
   constants are partially ordered \cite[Proposition 5.1.9]{PAG}.
\begin{proposition}[Relation between $d$-dimensional Seshadri
constants]\label{SCrltn}
   For a line bundle $L$ on a smooth projective variety $X$ of dimension
   $n$, a point $x\in X$ and an integer $d$ with $1\leq d\leq n$ we have
   $$\eps(L;x)\leq \eps_d(L;x).$$
\end{proposition}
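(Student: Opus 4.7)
The plan is to fix an arbitrary $d$-dimensional subvariety $V\subset X$ passing through $x$ and to establish
$$\eps(L;x)^d\cdot \mult_x V \;\le\; L^d\cdot V.$$
Dividing by $\mult_x V$, taking $d$-th roots and then the infimum over all such $V$ then yields $\eps(L;x)\le \eps_d(L;x)$.

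The approach is to work on the blowup. Let $f:Y\to X$ be the blowup of $X$ at $x$ with exceptional divisor $E\cong \P^{n-1}$, and let $\tilde V\subset Y$ denote the proper transform of $V$. For any rational $\lambda<\eps(L;x)$, Definition \ref{defsubscheme} guarantees that $f^*L-\lambda E$ is ample on $Y$, so Kleiman's criterion applied to the $d$-cycle $\tilde V$ gives
$$(f^*L-\lambda E)^d\cdot \tilde V \;>\; 0.$$
The heart of the proof is to expand this intersection number and check that it equals $L^d\cdot V-\lambda^d\mult_x V$.

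Expanding via the binomial theorem produces three types of terms. The top term $(f^*L)^d\cdot \tilde V$ equals $L^d\cdot V$ by the projection formula, since $\tilde V\to V$ is birational. For each $1\le i\le d-1$ the mixed term $(f^*L)^{d-i}\cdot E^i\cdot \tilde V$ vanishes, because $E^i\cdot \tilde V$ is supported on $E$, which $f$ contracts to a point, and the projection formula then kills the intersection with the positive power $(f^*L)^{d-i}$. Finally, $E^d\cdot \tilde V=(-1)^{d-1}\mult_x V$; this comes from identifying $\tilde V\cap E$ with the projectivized tangent cone of $V$ at $x$ -- a $(d-1)$-cycle of degree $\mult_x V$ in $E\cong\P^{n-1}$ -- and intersecting with $E|_E=\calo_{\P^{n-1}}(-1)$. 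Substituting,
$$(f^*L-\lambda E)^d\cdot \tilde V \;=\; L^d\cdot V-\lambda^d\mult_x V \;>\; 0,$$
and letting $\lambda\nearrow \eps(L;x)$ produces the desired inequality.

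The main obstacle is bookkeeping on the blowup: verifying the vanishing of every mixed term and pinning down the sign in $E^d\cdot \tilde V$. Once these standard but slightly fiddly intersection computations are in place, the result is immediate from Kleiman's criterion applied to the proper transform.
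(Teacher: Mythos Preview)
Your proof is correct and follows essentially the same approach as the one the paper cites from \cite[Proposition~5.1.9]{PAG}: pass to the blowup, use ampleness of $f^*L-\lambda E$ for $\lambda<\eps(L;x)$, expand $(f^*L-\lambda E)^d\cdot\tilde V$ via the projection formula and the identification of $E^d\cdot\tilde V$ with the multiplicity, and let $\lambda\nearrow\eps(L;x)$. The paper itself does not reproduce the argument, so there is nothing further to compare.
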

   Note that for $d$ we just recover the bound from Proposition
   \ref{upperbound} with $r=1$.

   Recently Ross and Ro\'e \cite[Remark 1.3]{RosRoe}
   have raised the interesting question if
   $$\eps_{d_1}(L;x)\leq \eps_{d_2}(L;x)$$
   for all $d_1\leq d_2$ (and the analogous version in the multi-point setting).

   \setcounter{theorem}{0}
\renewcommand{\thetheorem}{\thesubsection.\arabic{theorem}}
\renewcommand{\theequation}{\thesubsection.\arabic{theorem}}

\subsection{Lower bounds}\label{low-bou}
   Now we turn our attention to lower bounds.
   Extrapolating on Definition \ref{infs}, one could hope that yet
   another infimum can be taken: For a positive integer $n$ define
   $$\eps(n):=\inf\eps(X),$$
   where the infimum is taken this time over all smooth projective
   varieties of dimension $n$. However the numbers $\eps(n)$
   always equal zero. Miranda (see \cite[Example 5.2.1]{PAG})
   constructed a sequence of examples
   of smooth surfaces $X_n$, ample line bundles $L_n$ on $X_n$
   and points $x_n\in X_n$ such that
   $$\lim_{n\rightarrow\infty}\eps(X_n,L_n;x_n)=0.$$
   Miranda's construction was generalized to arbitrary dimension by
   Viehweg (see \cite[Example 5.2.2]{PAG}). In these examples only
   rational varieties were used but it was quickly realized in
   \cite[Proposition 3.3]{Bau99} that the same phenomenon happens
   on suitable blow ups of arbitrary varieties. Note that in the above sequence it is
   necessary to change the underlying variety all the time. It is
   natural to ask if one could realize the sequence $(L_n,x_n)$ as
   above on a single variety $X$, i.e., to raise the following problems.
\begin{question}[Existence of a lower bound on a fixed variety]\mylabel{lowerx}$\;$
   \begin{itemize}
      \item[(a)] Can it happen that $\eps(X)=0$?
      \item[(b)] If not, is it possible to compute a lower bound in
      terms of geometric invariants of $X$?
   \end{itemize}
\end{question}
   This question was asked already in the pioneering paper of
   Demailly \cite[Question 6.9]{Dem92}. Up to now, we don't know.
   However there is one obvious instance in which there is a
   negative answer to Question \ref{lowerx}(a), namely if the Picard
   number $\rho(X)$ is equal to $1$. In case of surfaces there is
   also a sharp answer to Question \ref{lowerx}(b). We come back to this in Theorem \ref{eff-bound-sur}.

   Another class of varieties, where answers to Question
   \ref{lowerx} are known,
   is constituted by abelian varieties.
   First of all, since on an abelian variety one can translate
   divisors around without changing their numerical class, it is
   clear that one has the lower bound
\addtocounter{theorem}{1}
   \ben\label{ab-bound}
      \eps(X,L)\geq 1
   \een
   for any ample line bundle $L$ on an abelian variety $X$.
   A beautiful result of Nakamaye \cite{Nak96} gives precise
   characterization of when there is equality in
   \eqnref{ab-bound}.
\begin{theorem}[Seshadri constants on abelian varieties]\mylabel{ab-thm}
   Let $(X,L)$ be a polarized abelian variety. Then
   $\eps(L)= 1$
   if and only if $X$ splits off an elliptic
   curve and the polarization splits as well, i.e.,
   $$X=X'\times E \mbox{ and } L=\pi_1^*(L')\otimes\pi_2^*(L_E),$$
   where $E$ is an elliptic curve, $X'$ an abelian variety, $L_E,
   L'$ are ample line bundles on $E$ and $X'$ respectively and
   $\pi_i$ are projections in the product.
\end{theorem}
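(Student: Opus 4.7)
The plan is to treat the two directions separately; the "only if" direction is Nakamaye's theorem proper and is the hard one. For the "if" direction, suppose $X=X'\times E$ with $L=\pi_1^*(L')\otimes\pi_2^*(L_E)$ and $L_E$ a principal polarization (degree $1$) on the elliptic curve $E$ -- this is what the splitting of the polarization amounts to in the equality case $\eps(L)=1$. For any $x\in X$, the translate through $x$ of $\{0\}\times E\subset X$ is a smooth curve $C$ with $\mult_x C=1$ and $L\cdot C=\deg L_E=1$, so $\eps(L;x)\le 1$. Combined with the universal lower bound \eqnref{ab-bound}, this gives $\eps(L)=1$.

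For the "only if" direction assume $\eps(L)=1$. By translation invariance on an abelian variety, $\eps(L;x)=1$ at every point $x$. The first step is to produce a Seshadri curve. Take a minimizing sequence of irreducible curves $C_k$ and points $x_k$ with $L\cdot C_k/\mult_{x_k}C_k\to 1$. Using the multiplication maps $n_X\colon X\to X$, under which $n_X^*L\equiv n^2L$, together with translations, one can normalize the sequence so that every $C_k$ passes through the origin and the degrees $L\cdot C_k$ stay bounded. Boundedness of the Hilbert scheme of curves of bounded $L$-degree then produces an irreducible limit curve $C\subset X$ containing $0$ with $L\cdot C\le\mult_0 C$; equality follows from \eqnref{ab-bound}.

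The heart of the proof is the next step. Let $A\subset X$ be the smallest abelian subvariety containing a translate of $C$; the claim is $\dim A=1$. One examines the polarized pair $(A,L|_A)$: since $C\subset A$ achieves $L|_A\cdot C=\mult_0 C$, one has $\eps(L|_A)=1$ as well. If $\dim A\ge 2$, the translates of $C$ sweep out $A$ by the definition of $A$, yielding enough curves through a general point of $A$ to contradict \eqnref{ab-bound} on $A$ via a dimension-induction on polarized abelian subvarieties and the rigidity of the equality case. Hence $A=E$ is an elliptic curve with $L\cdot E=1$, so $L|_E$ is a principal polarization. A polarized version of Poincar\'e reducibility then produces a complementary abelian subvariety $X'\subset X$ with an ample polarization $L'$ such that $X=X'\times E$ and $L=\pi_1^*(L')\otimes\pi_2^*(L_E)$; the key point is that the principal polarization $L|_E$ identifies $E$ with $\widehat E$ and so provides the splitting rather than merely an isogeny.

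The main obstacle is the dimension reduction $\dim A=1$. This is where Nakamaye's technical work lives: controlling the abelian subvariety generated by a Seshadri curve through the combined use of the $n_X$-multiplication maps, translates, and the rigidity of the equality case in the universal bound \eqnref{ab-bound}. The rest of the argument is a combination of compactness for the Hilbert scheme and the polarized Poincar\'e reducibility theorem.
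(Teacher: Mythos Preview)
The paper does not give a proof of this theorem; it is quoted as a result of Nakamaye \cite{Nak96} and only cited, so there is nothing in the paper to compare your proposal against.

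As to your sketch itself: the ``if'' direction is fine once you add the hypothesis $\deg L_E=1$, which (as you note) is needed but is not explicit in the statement as written in the paper. For the ``only if'' direction your outline is in the spirit of Nakamaye's argument, but the two substantive steps are not really established. First, the bounded-degree/Hilbert-scheme compactness argument as you state it does not work: pulling back by $n_X$ multiplies $L$-degree by $n^2$ and multiplicity by only a bounded factor, so it does not directly bound degrees of a minimizing sequence; Nakamaye instead works directly with a single curve realizing the infimum and uses the multiplication maps to analyze its structure rather than to produce compactness. Second, and more seriously, the reduction $\dim A=1$ is the entire content of Nakamaye's theorem, and your paragraph (``contradict \eqnref{ab-bound} on $A$ via a dimension-induction \ldots and the rigidity of the equality case'') is a restatement of what has to be proved rather than an argument; Nakamaye's actual proof uses a delicate analysis involving the Pontryagin product and the behavior of the curve under the isogenies $n_X$, none of which appears in your sketch. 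Finally, the splitting at the end is not immediate either: an elliptic curve $E\subset X$ with $L\cdot E=1$ gives via Poincar\'e reducibility an isogeny $X'\times E\to X$, and one must check that the principal polarization on $E$ forces this isogeny to be an isomorphism compatible with the polarizations.
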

   Furthermore, a lower bound for the Seshadri constant
   $\eps(X)$ of a variety $X$ can always be given, provided one has good control over
   base point freeness or very ampleness of ample line bundles on
   $X$. Specifically we have the following fact
   \cite[Example 5.1.18]{PAG}.
\begin{proposition}[Lower bound for spanned line bundles]\label{lowspan}
   Let $L$ be an ample and spanned line bundle on a smooth
   projective variety $X$, then
   $$\eps(X,L;x)\geq 1$$
   for all points $x\in X$.
\end{proposition}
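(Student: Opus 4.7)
The plan is to exploit that a spanned line bundle gives a morphism to projective space. Set $N := h^0(X,L)-1$ and let $\phi\colon X \lra \P^N$ be the morphism determined by $|L|$, so that $L \cong \phi^*\calo_{\P^N}(1)$. Fix an irreducible curve $C \subset X$ passing through $x$, and write $m := \mult_x C \geq 1$. The goal is to produce a divisor $D \in |L|$ which passes through $x$ but does not contain $C$, because then intersecting $D$ with $C$ at $x$ will give the inequality we want.

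To produce such a $D$, I would first use the ampleness hypothesis: since $L$ is ample, $L \cdot C > 0$, and because $L = \phi^*\calo(1)$ this forces $\phi(C)$ to be a curve rather than a point, i.e.\ $\phi|_C$ is non-constant. Consequently there exists a hyperplane $H \subset \P^N$ containing $\phi(x)$ but not containing $\phi(C)$ (hyperplanes through $\phi(x)$ form a $\P^{N-1}$, while those containing the positive-dimensional $\phi(C)$ form a proper linear subspace). Setting $D := \phi^{-1}(H) \in |L|$, the divisor $D$ contains $x$ and meets $C$ properly at $x$.

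With $D$ in hand, the computation is
\[
L \cdot C \;=\; D \cdot C \;\geq\; (D \cdot C)_x \;\geq\; \mult_x D \cdot \mult_x C \;\geq\; 1 \cdot m \;=\; m,
\]
where the last step uses $x \in D$ (so $\mult_x D \geq 1$) and the middle step is the standard local intersection bound $(D \cdot C)_x \geq \mult_x D \cdot \mult_x C$ for a curve and a divisor meeting properly at a smooth point $x$ of $X$. Dividing by $m$ and taking the infimum over all irreducible curves $C$ through $x$ yields $\eps(X,L;x) \geq 1$.

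The only step requiring real care is the local intersection inequality $(D \cdot C)_x \geq \mult_x D \cdot \mult_x C$; this is a classical Bezout-type bound (one can verify it by passing to the blowup $f\colon Y \to X$ at $x$ with exceptional divisor $E$, where the strict transforms satisfy $f^*D = \tilde D + (\mult_x D)E$ and $f^*C = \tilde C + (\mult_x C)E$, and using $E^2 = -1$-type computations, or by reducing to the case of plane curves via a generic surface through $x$). Once that tool is accepted, the argument above is routine; the ampleness hypothesis is what allows the choice of $D$ and could in fact be relaxed to the condition $L\cdot C>0$.
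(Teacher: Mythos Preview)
Your argument is correct and is the standard one. The paper does not actually prove this proposition in the text; it merely cites \cite[Example~5.1.18]{PAG}, where essentially the same argument appears: use global generation to produce a divisor $D\in|L|$ through $x$ not containing $C$, then bound $L\cdot C=D\cdot C$ below by $\mult_xD\cdot\mult_xC\ge\mult_xC$.
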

   This proposition generalizes easily to the case when $L$
   generates $s$-jets at a~point, i.e., when the evaluation mapping
   $$H^0(X,L)\lra H^0(X,L\otimes\calo_X/\cali_x^{s+1})$$
   is surjective. (Here $\cali_x$ denotes the ideal sheaf of a point $x\in X$.)
\begin{proposition}[Lower bound under generation of higher jets]\label{lowjet}
   Let $L$ be an ample line bundle generating $s$-jets (for $s\geq 1$)
   at a point
   $x$ of a smooth projective variety $X$. Then
   $$\eps(X,L;x)\geq s.$$
   In particular, if $L$ is very ample, then $\eps(L;x)\geq 1$ for
   all points $x\in X$.
\end{proposition}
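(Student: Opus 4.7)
My plan is to show, for each irreducible curve $C$ passing through $x$ with $\mult_xC=m$, that there exists a divisor $D\in|L|$ satisfying $\mult_xD\geq s$ and $C\not\subset D$. Once such a $D$ is produced, the standard multiplicity inequality for proper intersections gives
\[
L\cdot C \;=\; D\cdot C \;\geq\; (D\cdot C)_x \;\geq\; \mult_xD\cdot\mult_xC \;\geq\; sm,
\]
and the desired bound $\eps(X,L;x)\geq s$ follows by taking the infimum over $C$. The corner case $n:=\dim X=1$ is trivial: then $C=X$, $\mult_xC=1$, and the surjectivity of $H^0(X,L)\to L\otimes\calo_X/\cali_x^{s+1}$ forces $h^0(L)\geq s+1$ and hence $\deg L\geq s$.

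To construct $D$ when $n\geq 2$, I would first observe that the tangent cone $TC_xC\subset T_xX$ is a one-dimensional algebraic cone consisting of at most $m$ lines through the origin. I would then choose a linear form $\ell\in(T_xX)^*$ whose zero hyperplane contains none of these lines; such $\ell$ exists because the locus of bad forms is a finite union of proper linear subspaces of $(T_xX)^*$. The homogeneous polynomial $\ell^s$ then does not vanish on $TC_xC$. Fixing a local trivialization of $L$ near $x$, the $s$-jet generation hypothesis produces a section $\sigma\in H^0(X,L)$ whose Taylor expansion at $x$ agrees with $\ell^s$ modulo $\cali_x^{s+1}$, so that $\mult_x\sigma=s$ and the initial form of $\sigma$ at $x$ equals $\ell^s$.

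The main obstacle is verifying that $D:=\mathrm{div}(\sigma)$ does not contain $C$, which is delicate because $C$ may be singular or even analytically reducible at $x$. I would argue by contradiction: if $C\subset D$, then $\sigma$ vanishes identically on $C$, so along any local analytic branch $\gamma(t)$ of $C$ at $x$ with tangent direction $v$ one would obtain $t^s\ell(v)^s+O(t^{s+1})\equiv 0$, forcing $\ell(v)=0$. But the line $\C v$ is by construction a component of $TC_xC$, contradicting the choice of $\ell$. Finally, the last clause of the proposition is just the special case $s=1$, since a very ample line bundle separates points and tangent vectors and therefore generates $1$-jets at every point of $X$.
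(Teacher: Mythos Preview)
Your argument is correct and is precisely the ``easy generalization'' of the spanned case that the paper alludes to but does not write out. The paper itself does not give a proof of this proposition; it simply remarks that the statement is a special case of Demailly's characterization $\eps(L;x)=\sup_k s(kL,x)/k$ (equation~\eqnref{nef_jets}), from which the bound follows immediately by taking $k=1$. Your approach is more self-contained: rather than invoking that asymptotic formula, you produce directly, for each curve $C$ through $x$, a divisor $D\in|L|$ with $\mult_xD=s$ and $C\not\subset D$, and then use the local intersection inequality. This is the natural extension of the argument for Proposition~\ref{lowspan} (the case $s=1$) given in \cite[Example~5.1.18]{PAG}.

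One small cosmetic point: in your contradiction step you write $\sigma(\gamma(t))=t^s\ell(v)^s+O(t^{s+1})$, which tacitly assumes the analytic branch is smooth at $x$. If the branch has multiplicity $a\ge1$ (so $\gamma(t)=t^av+\text{higher order}$ in local coordinates), the expansion becomes $\sigma(\gamma(t))=t^{as}\ell(v)^s+O(t^{as+1})$, since each monomial of degree $\ge s+1$ in the remainder contributes order $\ge a(s+1)$ in $t$. The conclusion $\ell(v)=0$ is unchanged, so nothing in your argument breaks.
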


   The above proposition is a special case of the following
   characterization of Seshadri constants via generation of jets.
   Denote for $k\ge 1$
   by $s(kL,x)$ the maximal integer $s$
   such that the linear series $|kL|$ generates $s$-jets at $x$.
   Then one has for $L$ nef,
\addtocounter{theorem}{1}
   \begin{equation}\label{nef_jets}
      \eps(L;x)=\sup \frac{s(kL,x)}k
   \end{equation}
   (see \cite[6.3]{Dem92}). If $L$ is ample, then the supremum is in fact a limit:
   $$
      \eps(L;x)=\lim_{k\to\infty} \frac{s(kL,x)}k.
   $$

   Whereas Question \ref{lowerx} has remained unanswered for several
   years, one can raise a seemingly easier problem concerning the Seshadri
   constant at a fixed point $x\in X$.
\begin{question}[Existence of a lower bound at a fixed point]
   Can it happen that $\eps(X,x)=0$?
\end{question}
   As of this writing we don't know the answer, even for surfaces.

\begin{paragraf}[Seshadri function]\rm\label{sesfun}
   Definition \ref{moving_sc} generalizes easily to $\R$-divisors
   and it is clear that it depends only on the numerical class
   of $D$. So, we can consider Seshadri constants for
   elements of the N\'eron-Severi space $N^1(X)_{\R}$. It is then
   reasonable to ask about regularity properties of the mapping
   $$\epsmov(X,\cdot;\cdot): N^1(X)_{\R}\times X\ni (L,x)\mapsto
   \epsmov(X,L;x)\in\R.$$
   It turns out that this mapping is continuous with respect to the
   first variable \cite[Theorem 6.2]{RV} and lower semi-continuous
   with respect to the second variable (in the topology for which closed
   sets are countable unions of Zariski closed sets)
   \cite[Example 5.1.11]{PAG}.
\end{paragraf}

\endgroup

%*****************************************************************************

\section{Projective spaces}\label{rec-est}
   The case of $\mathbb{P}^2$ polarized by $\mathcal{O}_{\mathbb{P}^2}
   (1)$ attracts most of the attention devoted to multiple point
   Seshadri constants. Thanks to a good interpretation in terms of
   polynomials the problem of estimating Seshadri constants is well
   tractable by computer calculations. This, together with the
   motivation to handle the still open Nagata conjecture, has caused
   a lot of effort to find lower estimates
   for general multiple point Seshadri constants on $\mathbb{P}^2$
   which are as precise as possible. In
   many cases analogous methods can also be applied in higher
   dimensions.

   For now the best estimates are obtained by M. Dumnicki using a
   combination of two methods contained in \cite{HR03} and
   \cite{Dum07}. Both methods appear in a different context and
   complement each other. The first gives us a relatively small family of
   all possible divisor classes that might contain curves which compute
   the Seshadri constants, whereas the second enables us to check if a
   linear system is empty.

   We need the following generalization of Definition
   \ref{submaxcur}.
\begin{definition}[Multi-point weakly-submaximal curve]\mylabel{submaxcurve}\rm
   Let $X$ be a smooth projective variety of dimension
   $n$ and $L$ an ample line bundle on $X$. Let $x_1,\dots,x_r \in X$
   be $r$ arbitrary distinct points. We say that a curve $C$ is
   \emph{weakly-submaximal} for $L$ with respect to these points if
   $$\frac{L\cdot C}{\sum\mult_{x_i}(C)}\leq \sqrt[n]{\frac{L^n}{r}}\;.$$
\end{definition}
   In the view of Proposition \ref{upperbound} weakly-submaximal curves are
   important because they contribute substantially to the infimum
   in Definition \ref{defDem}. It is not known in general if weakly-submaximal
   curves exist. In any case if there are no weakly-submaximal curves
   for $L$ with respect to the given points, then the Seshadri constant
   computed in these points equals $\sqrt[n]{L^n/r}$.

   The following theorem \cite{HR03} restricts the set of candidates
   for divisor classes of weakly-submaximal curves in $\P^2$ under the assumption
   that the points $x_1,\dots,x_r$ are in general position.
\begin{theorem}[Restrictions on weakly-submaximal curves]
   Let $X$ be obtained by blowing up
   $r\geq 10$ general points $p_1,\dots,p_r\in \P^2$
   and let $L$ be the pull-back of the hyperplane bundle on $\P^2$. If $H$ is
   the class of a proper transform to $X$ of a~weakly-submaximal curve, then
   there exist integers $t$,$m>0$ and $k$ such that:
   \begin{itemize}
      \item[(a)] $H=tL-m(E_1+ \dots +E_r)-kE_i\;$;
      \item[(b)] $-m<k$ and $k^2< \frac{r}{r-1} \min\{m,m+k\}\;$;
      \item[(c)] $\begin{cases}\rm
        m^2r+2mk+\max \{k^2-m,0)\}\leq t^2\leq m^2r+2mk+\frac{k^2}{r},  \text{ when } k> 0;
        \\ m^2r-m\leq t^2 < m^2r,  \text{ when }  k=0;\\
        m^2r+2mk+\max \{k^2-(m+k),0)\}\leq t^2\leq m^2r+2mk+\frac{k^2}{r},\\
        \hfill \text{ when } k< 0;
        \end{cases}$
     \item[(d)] $t^2-(m+k)^2-(r-1)m^2-3t+mr+k \geq -2$.
   \end{itemize}
\end{theorem}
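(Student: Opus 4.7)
The idea is to turn the geometric hypothesis---that $H$ is the class of the proper transform of an irreducible weakly-submaximal curve---into four numerical restrictions by combining Definition \ref{submaxcurve} with intersection theory on $X$ and with the action of the symmetric group $S_r$ on $N^1(X)$ afforded by the genericity of the points.

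I begin by writing $H=tL-\sum_{j} m_j E_j$, so that the weakly-submaximal inequality reads $t\sqrt{r}\le\sum_j m_j$, and then I exploit the generality of $p_1,\dots,p_r$: for any $\sigma\in S_r$, the permuted class $\sigma\cdot H$ is realised by an irreducible weakly-submaximal curve as well. By iteratively swapping pairs of multiplicities that differ by more than one (which preserves the weakly-submaximality condition and, thanks to the genericity of the points together with $r\ge 10$, also the effectivity and irreducibility of the representing curve), one can reduce to the situation in which at most one multiplicity differs from a common value $m$; labelling that exceptional index $i$ and writing the differing multiplicity as $m+k$ gives~(a).

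With (a) in hand, set $\sum_j m_j=mr+k$. Squaring the weakly-submaximal inequality immediately gives the upper bound $t^{2}\le m^{2}r+2mk+k^{2}/r$ in~(c), while the arithmetic-genus inequality $p_a(H)=\tfrac12(H^{2}+K_X\cdot H)+1\ge 0$, applied with $K_X=-3L+\sum_j E_j$, rearranges directly to~(d). For the lower bound in~(c), I intersect $H$ with its $S_r$-conjugate $H'$ obtained by moving the atypical multiplicity $m+k$ to another exceptional index: nonnegativity of $H\cdot H'$ yields the base term $m^{2}r+2mk$, and the self-intersection bound $H^{2}\ge-\min\{m,m+k\}$ (forced by the multiplicity structure of an irreducible proper transform along~$E_i$) accounts for the $\max\{k^{2}-m,0\}$ or $\max\{k^{2}-(m+k),0\}$ correction, depending on the sign of~$k$.

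Finally,~(b) splits into the trivial positivity $-m<k$ (the multiplicity $m+k$ must be positive) and the Diophantine restriction $k^{2}<\tfrac{r}{r-1}\min\{m,m+k\}$, obtained by combining the upper and lower bounds in~(c) and simplifying. The step I expect to be the main obstacle is the reduction~(a): a naive symmetrisation of multiplicities need not preserve either effectivity or irreducibility, so the swapping argument has to be carried out one permutation at a time, and it is precisely here that the threshold $r\ge 10$ (past which no $(-1)$-curve can dominate the submaximality computation through general points) becomes essential.
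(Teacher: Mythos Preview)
First, a clarification: the paper does not supply its own proof of this theorem---it simply quotes the result from \cite{HR03}. So there is no ``paper's proof'' to compare against, and your proposal has to be judged on its own merits and against what one can reconstruct of the Harbourne--Ro\'e argument.

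Several pieces of your outline are sound. Squaring the weakly-submaximal inequality $t\sqrt r\le mr+k$ does give the upper bound in~(c); the adjunction computation with $K_X=-3L+\sum E_j$ does give~(d); and intersecting $H$ with its transpositional conjugate $H'$ (special index moved from $i$ to some $j\ne i$) does give $t^2\ge m^2r+2mk$, which is the ``$\max\{\cdot,0\}=0$'' half of the lower bound in~(c). Deriving the inequality in~(b) by confronting the two sides of~(c) is also the right idea.

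There are, however, two genuine gaps. The first and more serious is your argument for~(a). You describe ``iteratively swapping pairs of multiplicities that differ by more than one'' as a reduction, but a swap under the $S_r$-action produces a \emph{different} curve in a \emph{different} numerical class; it does not alter $H$. The theorem asserts that the original class $H$ already has $r-1$ equal multiplicities, so what must be shown is that any other multiplicity pattern leads to a contradiction. The mechanism is indeed intersection with permuted classes, but the logic is ``if $H$ were unbalanced, then $H\cdot\sigma(H)<0$ for an appropriate $\sigma$, contradicting irreducibility'', not ``swap until balanced''. In particular, the observation that a weakly-submaximal class satisfies $H^2\le 0$ (via Cauchy--Schwarz applied to $t^2r\le(\sum m_j)^2\le r\sum m_j^2$) is the entry point: once $H^2<0$, the inequality $H\cdot\sigma(H)\ge 0$ for every nontrivial $\sigma$ forces $(m_i-m_j)^2\ge -H^2\ge \sum_l(m_l-\bar m)^2$ for every pair with $m_i\ne m_j$, and it is an elementary (but not entirely trivial) combinatorial step from there to ``all but one equal'' when $r\ge 10$. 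Your sketch does not contain this step.

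The second gap is the self-intersection bound $H^2\ge -\min\{m,m+k\}$ that you invoke for the sharper half of the lower bound in~(c). Nothing about ``the multiplicity structure of an irreducible proper transform along $E_i$'' forces such a bound; irreducible curves on blowups of $\P^2$ can in principle have arbitrarily negative self-intersection, and for general points the statement $C^2\ge -1$ is the (open) SHGH-type conjecture. In \cite{HR03} this inequality is obtained not from a bare self-intersection estimate but by a further specialization/intersection argument (roughly: move one of the ordinary-multiplicity points onto the curve and intersect again), and that extra input is missing from your plan.

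Finally, a small point on~(b): $-m<k$ is not simply ``$m+k$ must be positive''. A weakly-submaximal curve for $r$ points may well miss one of them, so $m+k=0$ is not excluded a priori; ruling it out is part of the argument, not an assumption.
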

   A potential curve $C$ from the linear system on $\mathbb{P}^2$
   corresponding to numbers $t,m,k$ would give the ratio
   $\frac{L\cdot C}{\sum_{i=1}^r \mult_{x_i}(C)}\leq \frac{t}{mr+k}$.
   Thus there is an infinite list of linear systems on $\P^2$, which might
   contain among their elements weakly-submaximal curves. In order to give a lower
   estimate $\alpha$ for the multi-point Seshadri constant in
   $r$ general points, we need to prove that these linear systems
   connected with the numbers $(t,m,k)$ are empty for
   $\frac{t}{mr+k}<\alpha $. Observe that for each
   $\alpha<\sqrt{\frac{1}{r}}$ there is only a finite set of systems to
   check.

   The emptiness of the above systems is proved applying methods of
   \cite{Dum07}. More precisely one uses the algorithm called
   {\sc NSsplit}, which has proved up to date to be the most efficient for checking
   non-speciality (in particular emptiness) of linear systems defined on
   $\P^2$ by vanishing with given multiplicities at a
   number of points in very general position. As this is not directly
   connected with the study of Seshadri constants we omit details and
   refer to the original paper for a precise description of the
   algorithm.

   Recall that for all $r$ which are squares, the Nagata conjecture holds
   and thus gives the exact value of the Seshadri constant. For integers $r$ with $10\leq r\leq
   32$ which are not squares, using the above method M. Dumnicki
   obtained the following table of estimates:
$$
\begin{array}{ccccc}\hline
r  & \text{lower} & \text{approximate} & \text{non-checked } &\text{conjectured} \\[0.2ex]
   & \text{estimate} & \text{value} & \text{system} &\text{approximate value} \\[0.8ex]
   \hline
10 & \frac{313}{990} & \simeq 0.3161616162    & L(313;99^{10})   &   \simeq 0.3162277660 \\[0.8ex]
11 & \frac{242}{803} & \simeq 0.3013698630    & L(242;73^{11})   &  \simeq 0.3015113446 \\[0.8ex]
12 & \frac{277}{960} & \simeq 0.2885416667    & L(277;80^{12})   &  \simeq 0.2886751346 \\[0.8ex]
13 & \frac{602}{2171} & \simeq 0.2772915707   & L(602;167^{13})  & \simeq 0.2773500981 \\[0.8ex]
14 & \frac{389}{1456} & \simeq 0.2671703297   & L(389;104^{14}) &  \simeq 0.2672612419 \\[0.8ex]
15 & \frac{484}{1875} & \simeq 0.2581333333   & L(484;125^{15})  &  \simeq 0.2581988897 \\[0.8ex]
17 & \frac{305}{1258} & \simeq 0.2424483307   & L(305;74^{17})   &  \simeq 0.2425356250 \\[0.8ex]
18 & \frac{369}{1566} & \simeq 0.2356321839   & L(369;87^{18})  &  \simeq 0.2357022604 \\[0.8ex]
19 & \frac{741}{3230} & \simeq 0.2294117647   & L(741;170^{19}) &  \simeq 0.2294157339 \\[0.8ex]
20 & \frac{796}{3560} & \simeq 0.2235955056   & L(796;178^{20}) &  \simeq 0.2236067977 \\[0.8ex]
21 & \frac{1865}{8547} & \simeq 0.2182052182  & L(1865;407^{21}) &  \simeq 0.2182178902 \\[0.8ex]
22 & \frac{924}{4334} & \simeq 0.2131979695   & L(924;197^{22})  &  \simeq 0.2132007164 \\[0.8ex]
23 & \frac{585}{2806} & \simeq 0.2084818247   & L(585;122^{23})  &  \simeq 0.2085144141 \\[0.8ex]
24 & \frac{965}{4728} & \simeq 0.2041032149   & L(965;197^{24})  &  \simeq 0.2041241452 \\[0.8ex]
26 & \frac{622}{3172} & \simeq 0.1960907945   & L(622;122^{26})  &  \simeq 0.1961161351 \\[0.8ex]
27 & \frac{956}{4968} & \simeq 0.1924315620   & L(956;184^{27}) &  \simeq 0.1924500897 \\[0.8ex]
28 & \frac{2434}{12880} & \simeq 0.1889751553 & L(2434;460^{28}) &  \simeq 0.1889822365 \\[0.8ex]
29 & \frac{2364}{12731} & \simeq 0.1856884769 & L(2364;439^{29}) &  \simeq 0.1856953382 \\[0.8ex]
30 & \frac{2388}{13080} & \simeq 0.1825688073 & L(2388;436^{30}) &  \simeq 0.1825741858 \\[0.8ex]
31 & \frac{10729}{59737} & \simeq 0.1796039306& L(10729;1927^{31})&  \simeq 0.1796053020 \\[0.8ex]
32 & \frac{1137}{6432} & \simeq 0.1767723881  & L(1137;201^{32}) &  \simeq 0.1767766953 \\[0.8ex]
\hline
\end{array}
$$

   In the fourth column there is included the list of systems
   not yet proven to be empty. The notation $L(d,m^r)$ stands
   for the system of curves of degree $d$ passing with multiplicity $m$
through each
   of $r$ general points.

%*****************************************************************************
\begingroup
\setcounter{theorem}{0}
\renewcommand{\thetheorem}{\thesubsection.\arabic{theorem}}
\renewcommand{\theequation}{\thesubsection.\arabic{theorem}}

\section{Toric varieties} % Sandra
   Toric varieties carry strong local constraints, due to the torus action.
   The behavior of Seshadri constants at a given number of points
   is bounded by  the maximal generation of jets at that number of points.
   Equivalently, the Seshadri criterion of ampleness, Theorem \ref{Seshcrit},
   generalizes to a criterion on the generation of multiple higher order jets.
   Moreover, estimates on  local positivity can be explained by properties of an associated convex integral polytope.

   Some of the results reported in this section are  contained in \cite{DR99}
   to which we refer for more details regarding  proofs.
   Some background on toric geometry will be explained, but we refer to \cite{Fu} for more.

\setcounter{theorem}{0}
\renewcommand{\thetheorem}{\thesubsection.\arabic{theorem}}
\renewcommand{\theequation}{\thesubsection.\arabic{theorem}}

\subsection{Toric Varieties and polytopes}
   Let $X$ be a non-singular  toric variety of dimension $n$ and $L$ be an ample line bundle on $X.$
   We identify the torus $T$, acting on $X$, with $N\tensor\C$, for an $n$-dimensional lattice $N\cong \Z^n$.
   The geometry of $X$ is completely described by a fan $\Delta\subset N$.
   In particular the $n$-dimensional cones in the fan, $\sigma_1,...,\sigma_l$, define affine patches:
   $$X=\bigcup_{i=1}^l U_{\sigma_i}.$$
   Since $X$ is non-singular, every cone $\sigma\in\Delta$
   is given by  $\sigma_j=\sum_{i=1}^n \R_+ n_i,$ where the $\{n_i\}$ form
   a lattice basis for $N$.
   Let $\Delta(s)$ denote the set of cones of $\Delta$ of dimension $s$.
   Every $n_i\in \Delta(1)$ is associated to a divisor  $D_i$.

   The Picard group of $X$ has finite rank and it is generated by the divisors $D_i$:
   $$Pic(X)=\bigoplus_{i=1}^d \Z< D_i>.$$
   Hence we can write $L=\sum_{i=1}^d a_i D_i$.

   The pair $(X,L)$ defines a convex, $n$-dimensional, integral polytope in the lattice $M$ dual to $N$:
   $$P=P_{(X,L)}=\{v\in M\, |\, <v,n_i>\geq a_i\}.$$
   We will denote by $P(s)$ the set of faces of $P$ of dimension $s$.
   In particular $P(0)$ is the set of vertices and
   $P(n-1)$ is the set of facets. We denote by $|F|$  the number of lattice points on the face $F$.
   There is the following one-to-one correspondence:
 \[\begin{array}{ccccc}
 \sigma\in\Delta(n)&\Leftrightarrow&v(\sigma)\in P(0)&\Leftrightarrow& x(\sigma)\text{ fixed point }\\
n_i\in\Delta(1)&\Leftrightarrow&F_i\in P(n-1)&\Leftrightarrow& D_i\text{ invariant divisors }\\
 \rho\in\Delta(n-1)&\Leftrightarrow&e_{\rho}\in P(1)&\Leftrightarrow& C_{\rho}\text{ invariant curve }
 \end{array}\]

   Moreover $C_{\rho}\cong \P^1$ for every $ \rho\in\Delta(n-1)$.

   Recall also that the toric variety $X$ being non-singular
   is equivalent to the polytope $P$ being Delzant, i.e., satisfying the following two properties:
\begin{itemize}
   \item there are exactly $n$ edges originating from each vertex;
   \item for each vertex, the first integer points on the edges form a lattice basis.
\end{itemize}

   By the length of an edge $e_\rho$ we mean $|e_\rho|-1$.

   Recall that $H^0(X,L)\cong \oplus_1^{|P\cap M|} \C$.
   By $\{s(m)\}_{m\in P\cap M}$ we denote a  basis for $H^0(X,L)$.

   \setcounter{theorem}{0}
\renewcommand{\thetheorem}{\thesubsection.\arabic{theorem}}
\renewcommand{\theequation}{\thesubsection.\arabic{theorem}}

\subsection{Torus action and Seshadri constants}
   Seshadri constants on non-singular toric varieties are particularly easy to estimate because of
   an explicit  criterion for the generation of $k$-jets.

   Proposition \ref{lowjet} tells us that as soon as we are able to estimate the highest degree of jets generated by
   all multiples of $L$ we can compute the Seshadri constant of $L$ at any point $x\in X$.

   We begin by showing that the generation of jets at the fixed points is detected by the size of the associated polytope.

\begin{lemma}[Generation of jets on toric varieties]\label{fixed}
   Let $x(\sigma)$ be a point fixed by the torus action.
   A line bundle $L$ generates $k$-jets and not $(k+1)$-jets  at $x(\sigma)$,
   if and only if all the edges of $P$ originating from $x(\sigma)$ have length at least $k$,
   and there is at least one edge of length $k$.
\end{lemma}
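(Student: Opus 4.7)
The plan is to translate the jet-generation statement into a combinatorial condition on the lattice points of $P$ and then close by a short convexity argument. The essential structural input is that because $P$ is Delzant, the $n$ edges at $v(\sigma)$ lie along primitive vectors $u_1,\dots,u_n\in M$ that form a $\Z$-basis of $M$. These vectors generate the dual cone $\sigma^\vee$, and the associated characters $t_i=\chi^{u_i}$ furnish affine coordinates on $U_\sigma\cong\C^n$, placing $x(\sigma)$ at the origin. For any $m\in P\cap M$, the point $m-v(\sigma)$ lies in $\sigma^\vee\cap M$ and has a unique expression $m-v(\sigma)=a_1u_1+\cdots+a_nu_n$ with $a_i\in\Z_{\geq 0}$; accordingly, after a standard trivialization of $L$ on $U_\sigma$, the section $s(m)$ restricts to the monomial $t_1^{a_1}\cdots t_n^{a_n}$.

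With this dictionary in place, the jet space $H^0(L\otimes\calo_X/\cali_{x(\sigma)}^{k+1})$ has $\{t^a:|a|\leq k\}$ as a basis, so the evaluation map is surjective exactly when every such monomial arises from some $s(m)$. Equivalently, setting
\[ T_k=\{v(\sigma)+a_1u_1+\cdots+a_nu_n\,:\,a_i\in\Z_{\geq 0},\;a_1+\cdots+a_n\leq k\}, \]
the statement that $L$ generates $k$-jets at $x(\sigma)$ becomes $T_k\subset P$. The lemma therefore reduces to proving that $T_k\subset P$ if and only if every edge of $P$ emanating from $v(\sigma)$ has length at least $k$.

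The ``if'' direction is immediate from convexity of $P$: if each of the $n$ points $v(\sigma)+ku_i$ lies in $P$, then so does the convex hull of these points with $v(\sigma)$, and this hull contains $T_k$. The ``only if'' direction is where I expect the main subtlety; one must rule out the ray from $v(\sigma)$ in direction $u_i$ re-entering $P$ after leaving. This follows from convexity again: the edge along $u_i$ terminates at the adjacent vertex $v(\sigma)+\ell_i u_i$, where $\ell_i$ is its length, so no point $v(\sigma)+s u_i$ with $s>\ell_i$ lies in $P$. Hence if some $\ell_i<k$, then $v(\sigma)+(\ell_i+1)u_i\notin P$ even though $\ell_i+1\leq k$, and the monomial $t_i^{\ell_i+1}$ is unattainable. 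Combining both implications yields that $L$ generates $k$-jets iff every edge at $v(\sigma)$ has length at least $k$, and fails to generate $(k+1)$-jets iff some edge has length exactly $k$. The real conceptual obstacle is the initial setup — choosing the right toric chart and identifying which monomial each section restricts to — after which convex geometry finishes the argument.
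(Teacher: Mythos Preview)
Your proposal is correct and follows essentially the same approach as the paper: choose local toric coordinates at $x(\sigma)$ using the edge directions as a lattice basis, identify the sections $s(m)$ with monomials, reduce surjectivity of the jet map to the containment $T_k\subset P$, and conclude by convexity. If anything, you are more explicit than the paper, which compresses both directions of the convexity step into a single sentence; your separate treatment of the ``only if'' direction (ruling out re-entry of the ray beyond the adjacent vertex) makes the argument cleaner.
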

\begin{proof}
   Let $x(\sigma)$ be a fixed point. We can choose local coordinates $(x_1,...,x_n)$,
   in the affine patch $U_{\sigma}\cong\C^n$Ê such that $x(\sigma)=0$.
   After choosing the lattice basis $(m_1,...,m_n)$, given by the first lattice points on the edges from $x(\sigma)$
   the map
   $$
   \phi_{x(\sigma)}:H^0(X,L)\to H^0(L\tensor\calo_X/m_{x(\sigma)}^{k+1})
   $$
   is defined by
   $$
   s(m=\sum b_i m_i)\mapsto (\Pi x_i^{b_i}|_{x=0}, \ldots, \frac{\partial \Pi x_i^{b_i}}{\partial x_i}|_{x=0}, \ldots, \frac{\partial^k \Pi x_i^{b_i}}{\partial ^{k_1}x_{i_1}\ldots\partial^{k_j} x_{i_j} }|_{x=0}, \ldots).
   $$
   This map is indeed surjective if and only if,
   in the given basis,  $(b_1,...,b_n)\in P\cap M$ for $\sum b_i=k$.
   By convexity this is equivalent to the length of  the edges of $P$ originating from $x(\sigma)$ being at least $k$.
\end{proof}
   Observing that $P_{tL}=tP$, the above criterion gives the exact value of Seshadri constants  at the fixed points. Let
   $$
   s(P,\sigma)=\min_{v(\sigma)\in e_\rho}\{|e_\rho|-1\}.
   $$
\begin{corollary}[Seshadri constants at torus fixed points]
   $$\epsilon(L, x(\sigma))=s(P,\sigma).$$
\end{corollary}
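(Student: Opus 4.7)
The plan is to bracket $\epsilon(L, x(\sigma))$ between matching upper and lower bounds, both coming straight from the toric dictionary and the preceding Lemma.

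For the upper bound, I would exhibit a Seshadri curve through $x(\sigma)$. Pick an edge $e_\rho$ of $P$ with $v(\sigma)\in e_\rho$ realizing the minimum $|e_\rho|-1 = s(P,\sigma)$. Under the dictionary recalled at the start of the section, this edge corresponds to a $T$-invariant curve $C_\rho\cong \P^1$ passing through the fixed point $x(\sigma)$. The standard toric computation gives $L\cdot C_\rho = |e_\rho|-1 = s(P,\sigma)$, and since $C_\rho$ is smooth through $x(\sigma)$, $\mult_{x(\sigma)} C_\rho = 1$. Hence by Definition \ref{defDem},
$$\epsilon(L, x(\sigma)) \;\leq\; \frac{L\cdot C_\rho}{\mult_{x(\sigma)} C_\rho} \;=\; s(P,\sigma).$$

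For the lower bound, I would pass to multiples and invoke the previous Lemma. Since $P_{tL}=tP$, every edge of $P_{tL}$ emanating from $v(\sigma)$ has length exactly $t$ times the corresponding length in $P$, so
$$s(tP,\sigma) \;=\; t\cdot s(P,\sigma).$$
The Lemma then says that $tL$ generates $t\cdot s(P,\sigma)$-jets at $x(\sigma)$. Applying Proposition \ref{lowjet} to $tL$ gives $\epsilon(tL, x(\sigma)) \geq t\cdot s(P,\sigma)$, and homogeneity of Seshadri constants yields
$$\epsilon(L, x(\sigma)) \;\geq\; s(P,\sigma).$$
(Equivalently, formula \eqref{nef_jets} reads $\epsilon(L, x(\sigma)) = \sup_t s(tL,x(\sigma))/t$, and every term in the sup equals $s(P,\sigma)$.)

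The two bounds combine to give the claimed equality. No step is really hard: the main thing to get right is the bookkeeping between the edge length in the lattice polytope, the degree of the invariant curve against $L$, and the scaling $P_{tL}=tP$ — all of which are standard facts from \cite{Fu}. The only mildly delicate point is making sure the upper bound uses the \emph{shortest} edge and the lower bound uses the \emph{longest} $k$ for which all edges have length at least $k$; by the definition of $s(P,\sigma)$ as the minimum, these two values agree, which is precisely why the Seshadri constant is pinned down exactly.
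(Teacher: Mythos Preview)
Your proof is correct. The lower bound via scaling and Proposition~\ref{lowjet} is exactly what the paper does; your parenthetical alternative via \eqref{nef_jets} is in fact the paper's entire argument, since Lemma~\ref{fixed} says $kL$ generates \emph{exactly} $ks(P,\sigma)$-jets at $x(\sigma)$, so every term $s(kL,x(\sigma))/k$ in the supremum equals $s(P,\sigma)$ and both inequalities fall out at once.

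The one genuine addition in your write-up is the explicit upper bound via the invariant curve $C_\rho$. The paper does not do this here (it extracts the upper bound from the ``not $(ks(P,\sigma)+1)$-jets'' half of the Lemma through \eqref{nef_jets}), though the same curve computation $L\cdot C_\rho = |e_\rho|-1$, $\mult_{x(\sigma)}C_\rho=1$ appears two results later in the proof of the higher-order Seshadri criterion. Your route has the advantage of exhibiting an actual Seshadri curve at $x(\sigma)$; the paper's route is slightly more economical since it uses only the Lemma already in hand.
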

\begin{proof}
   Theorem \ref{fixed} gives that $kL$ generates exactly $ks(P,\sigma)$-jets at $x(\sigma)$.
   Proposition \ref{lowjet} gives then that $\epsilon(L, x(\sigma))=s(P,\sigma)$.
\end{proof}

   Using this criterion we cannot  give an exact estimate at every point in $X$,
   but we can conclude that toric varieties admit a converse of Proposition \ref{lowjet},
   which can be interpreted combinatorially via the associated polytope.

\begin{theorem}[A jet generation criterion]
   A line bundle $L$ generates $k$-jets at every point $x\in X$
   if and only if all the edges of $P$ originating from $v(\sigma)$ have length at least $k$,
   for all vertices $v(\sigma)\in P(0).$
\end{theorem}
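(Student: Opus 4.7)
The plan is to prove both directions separately, with the forward implication following directly from Lemma~\ref{fixed} and the converse requiring a global argument via the torus action.

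For ($\Rightarrow$), if $L$ generates $k$-jets at every point of $X$, then in particular at every torus fixed point $x(\sigma)$, so Lemma~\ref{fixed} immediately translates this into the condition that every edge of $P$ meeting $v(\sigma)$ has length at least $k$.

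For ($\Leftarrow$), my strategy is to show that the failure locus
$$
Z_k = \{x \in X \,:\, L \text{ does not generate } k\text{-jets at } x\}
$$
is empty. First, $Z_k$ is Zariski closed, since the cokernel of the evaluation map $H^0(X,L) \to H^0(L \otimes \calo_X/\cali_x^{k+1})$ has upper semi-continuous dimension in $x$. Second, $Z_k$ is invariant under the torus $T$: fixing a $T$-linearisation of $L$ (which exists because $L$ is represented by a torus-invariant divisor on the toric variety $X$), the $T$-action on $X$ lifts to an action on $H^0(X,L)$ that makes the evaluation map equivariant, and $T$ carries $\cali_x^{k+1}$ onto $\cali_{t\cdot x}^{k+1}$. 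Third, I would invoke the standard fact that any non-empty closed $T$-invariant subset of a complete toric variety contains a torus fixed point (the minimal orbits inside such a subset are zero-dimensional, corresponding to maximal cones of the fan). Combined with the hypothesis and Lemma~\ref{fixed}, which guarantees $k$-jet generation at every fixed point $x(\sigma)$, this gives $Z_k \cap X^T = \emptyset$, hence $Z_k = \emptyset$.

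The hard part will be verifying the $T$-equivariance of the jet evaluation map carefully: one must choose a $T$-linearisation on $L$ so that translation by $t \in T$ intertwines the spaces $H^0(L \otimes \calo_X/\cali_x^{k+1})$ and $H^0(L \otimes \calo_X/\cali_{t\cdot x}^{k+1})$ in a way compatible with the action on $H^0(X,L)$. Once this equivariance is in place, the orbit-closure principle for complete toric varieties reduces the global generation question to the local statement of Lemma~\ref{fixed} at the fixed points, which holds by hypothesis.
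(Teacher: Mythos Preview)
Your proposal is correct and follows essentially the same strategy as the paper: both arguments show that the failure locus for $k$-jet generation is closed and $T$-invariant, then conclude that if nonempty it must contain a torus fixed point, contradicting Lemma~\ref{fixed}. The only cosmetic difference is that the paper invokes the Borel fixed point theorem to obtain a fixed point in the closed invariant set, whereas you use the more elementary toric fact that minimal $T$-orbits in a complete toric variety are zero-dimensional; both yield the same conclusion.
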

\begin{proof}
   Since the map $\phi_{x}:H^0(X,L)\to H^0(L/m_{x}^{k+1})$ is equivariant, the subset
   $$C=\{x\in X\, |\, Coker(\phi_x)\neq\emptyset\}$$
   is an invariant closed subset of $X$, hence it is proper.

   A line bundle $L$ fails to generate $k$-jets on $X$
   if and only if there is an $x\in X$ such that $Coker(\phi_x)\neq\emptyset$.
   In this case $C\neq\emptyset$ and thus, by the Borel fixed point theorem
   $C^T\neq\emptyset$, where $C^T$ denotes the set of fixed points in $C$.
   We conclude that $L$ fails to generate $k$-jets on $X$ if and only if
   $L$ fails to generate $k$-jets at some fixed point  $x(\sigma)\in X$.
   Lemma  \ref{fixed} implies the assertion.\end{proof}

\begin{corollary}[Higher order Seshadri criterion]
   The Seshadri constant satisfies $\epsilon(L)\geq s$
   if and only if all the edges of $P$ originating from $v(\sigma)$ have length at least $s$,
   for all vertices $v(\sigma)\in P(0)$.
\end{corollary}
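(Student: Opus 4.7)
The plan is to derive this corollary as an immediate consequence of the two results proved just above: the jet generation criterion that translates $s$-jet generation into edge lengths of $P$, and the corollary computing $\epsilon(L;x(\sigma)) = s(P,\sigma)$ at every torus-fixed point. The biconditional splits cleanly into two implications, one using the combinatorial criterion globally and the other specializing to fixed points.

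For the implication that the edge-length hypothesis forces $\epsilon(L) \geq s$, I would first invoke the jet generation criterion: the assumption that every edge of $P$ emanating from every vertex has length at least $s$ is exactly the criterion for $L$ to generate $s$-jets at every point of $X$. Proposition~\ref{lowjet} then gives $\epsilon(L;x) \geq s$ for every $x \in X$, and taking the infimum over $X$ yields $\epsilon(L) \geq s$.

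For the converse, I would specialize to torus-fixed points. If $\epsilon(L) \geq s$, then in particular $\epsilon(L;x(\sigma)) \geq s$ for every maximal cone $\sigma \in \Delta(n)$. By the corollary on Seshadri constants at torus-fixed points, this quantity equals $s(P,\sigma) = \min_{v(\sigma) \in e_\rho}(|e_\rho| - 1)$, so that minimum is at least $s$, which is precisely the assertion that every edge from $v(\sigma)$ has length at least $s$.

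I do not expect a substantive obstacle: the result is a genuine packaging of the two earlier theorems, with the only mild subtlety being the interaction between the integer-valued edge length $s(P,\sigma)$ and the a priori real-valued Seshadri constant $\epsilon(L)$. Since $s$ is taken to be a positive integer and $s(P,\sigma)$ is an integer, the inequality transfers without any rounding issue, and both directions follow directly from the already-established machinery.
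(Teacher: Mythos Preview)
Your proof is correct. The forward implication matches the paper's exactly: invoke the jet generation criterion to see $L$ generates $s$-jets everywhere, then apply Proposition~\ref{lowjet}.

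For the converse you take a slightly different (and cleaner) route than the paper. You use the already-proved corollary $\epsilon(L;x(\sigma))=s(P,\sigma)$ directly, so $\epsilon(L)\geq s$ forces $s(P,\sigma)\geq s$ at every vertex and you are done. The paper instead unpacks the Seshadri inequality at the fixed points via the invariant curves $C_\rho$: from $\epsilon(L;x(\sigma))\geq s$ and $\mult_{x(\sigma)}C_\rho=1$ it deduces $L\cdot C_\rho\geq s$, and then cites \cite[3.5]{DR99} for the fact that $L\cdot C_\rho$ equals the length of the corresponding edge. Your approach avoids this external citation by packaging that same intersection-number computation inside the earlier corollary you quote; the paper's approach makes the geometric mechanism (the invariant curves realize the edge lengths as intersection numbers) more explicit. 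Both are valid and essentially equivalent in content.
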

\begin{proof}
   If the edges of $P$ originating from $v(\sigma)$ have length at least $k$,
   for all vertices $v(\sigma)\in P(0)$, then the line bundle $kL$ is $ks$-jet ample
   for all $s\geq 1$, at all points $x\in X$. Proposition \ref{lowjet} gives then  $\epsilon(L)\geq s$.

   If $\epsilon(L)\geq s$, then $\epsilon(L, x(\sigma))\geq s$,
   for each fixed point $x(\sigma)$.
   It follows that, for all $(n-1)$ dimensional cones $\rho$  in $\sigma$,
   $$L\cdot C_\rho \geq s\cdot m(C_\rho)\geq  s,$$
   because $m_{x(\sigma)}(C_\rho)=1.$

   The property $L\cdot C_\rho \geq s$ for every $\rho\subset \sigma$
   and for all $\sigma\in\Delta(n)$ is equivalent to
   the edges of $P$ originating from $v(\sigma)$ having length at least $s$,
   for all vertices $v(\sigma)\in P(0),$ see \cite[3.5]{DR99}.
\end{proof}

   We easily conclude that:

\begin{corollary}[Global Seshadri constants are integers]
   $$\epsilon(L)=\min_{\sigma\in \Delta}s(P,\sigma).$$ In particular $\epsilon(L)$ is always an integer.
\end{corollary}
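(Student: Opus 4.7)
The plan is to chain together the two preceding corollaries of the subsection. By the very definition of $\epsilon(L)=\inf_{x\in X}\epsilon(L;x)$, the global Seshadri constant is bounded above by its value at any particular point, so specializing to the torus-fixed points $x(\sigma)$ and using the corollary on Seshadri constants at torus fixed points gives
$$\epsilon(L)\;\leq\;\min_{\sigma\in\Delta(n)}\epsilon(L;x(\sigma))\;=\;\min_{\sigma\in\Delta(n)}s(P,\sigma).$$

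For the reverse inequality, set $s:=\min_{\sigma\in\Delta(n)}s(P,\sigma)$. Because each $s(P,\sigma)$ is by construction the minimum of non-negative integers $|e_\rho|-1$, the number $s$ is itself a non-negative integer; this already delivers the ``integer'' half of the statement as soon as the equality is established. By the defining property of $s$, every edge of $P$ originating from every vertex $v(\sigma)\in P(0)$ has length at least $s$. Feeding this into the Higher order Seshadri criterion yields $\epsilon(L)\geq s$.

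Combining the two bounds gives $\epsilon(L)=s=\min_\sigma s(P,\sigma)$, and the integrality assertion follows. The proof is essentially a repackaging of the two previous corollaries, so there is no genuine obstacle; the only conceptual point worth flagging is why passing to the infimum over all $x\in X$ cannot produce anything smaller than the infimum over fixed points — and this is precisely what the Higher order Seshadri criterion (proved via the Borel fixed point theorem) guarantees.
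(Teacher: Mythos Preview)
Your proof is correct and matches the paper's intended argument: the paper does not give an explicit proof here, writing only ``We easily conclude that,'' and your chaining of the corollary on Seshadri constants at torus fixed points (for the upper bound) with the Higher order Seshadri criterion (for the lower bound) is precisely the deduction being signaled.
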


  \begin{example}\rm
   The polarized variety associated to the polytope
  \begin{center}
{}{}\scalebox{0.80}{\includegraphics{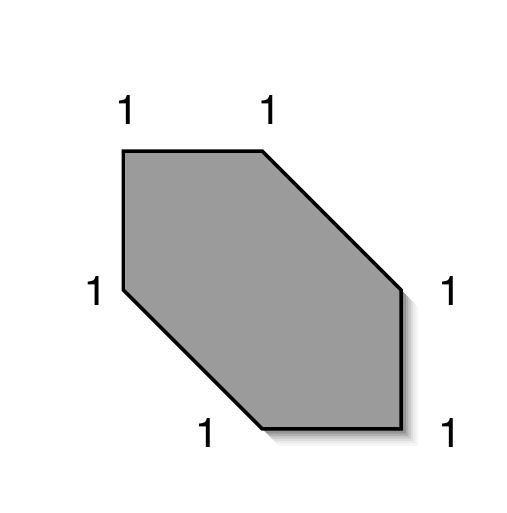}}[Fig. 1]\label{polytope}
 \end{center}
   is $(X,L)=(Bl_{P_1,P_2,P_3}(\P^2), \pi^*(\calo_{\P^2}(3)-E_1-E_2-E_3))$,
   where $\pi$ is the blow up of $\P^2$ at the three points fixed by the torus action
   and $E_i$ are the corresponding exceptional divisors.
   We see that  $s(P,\sigma)=1$ at all vertices, which shows that $\epsilon(L, x(\sigma))=1$ at the six fixed points.
   A local calculation shows that
  \[\begin{array}{cc}
  \epsilon(L, x)=2&\text{ for all }x\in X\setminus \cup_{\rho\in\Delta(1)} C_\rho\;;\\
  \epsilon(L, x)=1&\text{ for all }x\in \cup_{\rho\in\Delta(1)} C_\rho\;;\\
  \epsilon(L)=1\;.
 \end{array}\]
\end{example}

\endgroup
%*****************************************************************************
\begingroup

\section{Slope stability and Seshadri constants} % Thomas Bauer

\renewcommand\O{{\cal O}}
\newcommand\parag[1]{\par\medskip\noindent\textbf{#1}}
\let\tilde=\widetilde

   In \cite{RosTho07}, Ross and Thomas studied various notions of
   stability for polarized varieties, each of which leads to a
   concept of slope for varieties and subschemes. Our purpose in
   this section is to briefly touch upon this circle of ideas,
   and to see how Seshadri constants enter the picture. In order
   to be as specific as possible, we restrict attention to the
   concept of \emph{slope stability}; by way of example we
   present a~result on exceptional divisors of high genus from
   \cite{PanRos}.

   \parag{Slope of a polarized variety.}
   Let $X$ be a smooth projective variety and let $L$ be an ample
   line bundle on $X$. We consider the Hilbert polynomial
   $$
      P(k)=\chi(kL)=a_0k^n+a_1k^{n-1}+O(k^{n-2})
   $$
   and define the \textit{slope} of $(X,L)$ to be the rational
   number
   $$
      \mu(X,L)=\frac{a_1}{a_0}\,.
   $$
   In terms of intersection numbers, we have by Riemann-Roch
   $a_0=\frac1{n!}L^n$
   and $a_1=-\frac1{2(n-1)!}K_X\cdot L^{n-1}$,
   and therefore
\addtocounter{theorem}{1}
   \begin{equation}\label{slope-formula}
      \mu(X,L)=-\frac{n K_X\cdot L^{n-1}}{2L^n}\,.
   \end{equation}

   \parag{Slope of a subscheme.}
   Consider next a proper closed subscheme $Z\subset X$. On the
   blowup $f:Y\to X$ along $Z$ with the exceptional divisor $E$,
   the $\Q$-divisor $f^*L-xE$ is ample for $0< x<\eps(L,Z)$. Here
   $\eps(L,Z)$ is the Seshadri constant of $L$ along $Z$ (see
   Definition~\ref{defsubscheme}). There are polynomials
   $b_i(x)$ such that
   $$
      \chi(k(f^*L-xE))=b_0(x)k^n+b_1(x)k^{n-1}+O(k^{n-2})
      \qquad\mbox{for $k\gg0$ with $kx\in\N$.}
   $$
   One now sets $\tilde a_i(x)=a_i-b_i(x)$ and defines the
   \emph{slope of $Z$} with respect to a given real number $c$
   (and with respect to the polarization $L$) to be
   $$
      \mu_c(\calo_Z,L)=\frac{\int_0^c[\tilde a_1(x)+\frac12\frac{d}{dx}\tilde a_0(x)] \,dx}{\int_0^c \tilde a_0(x)\,dx} \,.
   $$
   When $Z$ is a divisor on a surface, then by Riemann-Roch one
   has
\addtocounter{theorem}{1}
   \begin{equation}\label{subscheme-slope-formula}
      \mu_c(\mathcal{O}_Z,L)=\frac{3(2L\cdot Z-c(K_X\cdot
      Z+Z^2))}{2c(3L\cdot Z-cZ^2)} \,.
   \end{equation}

   \parag{Slope stability.}
   One says that $(X,L)$ is \emph{slope semistable with respect
   to $Z$}, if
   $$
      \mu(X,L)\le\mu_c(\calo_Z,L) \qquad\mbox{for $0<c\le\eps(L,Z)$.}
   $$
   In the alternative case, one says that \emph{$Z$ destabilizes
   $(X,L)$}.
   (We will see below that in order to show that a
   certain subscheme is destabilizing, the crucial point is to find
   an appropriate $c$ in the range that is determined
   by the
   Seshadri constant of $Z$.)
   One checks that if the condition of semistability is
   satisfied, then it is also satisfied for $mL$ instead of $L$.
   So the notion extends to $\Q$-divisors.

\begin{remark}\rm
   The condition that a certain subscheme $Z$ destabilizes
   $(X,L)$ may be seen as a bound on the Seshadri constant
   $\eps(L,Z)$: For instance, when $X$ is a surface,
   then by \eqnref{slope-formula} and
   \eqnref{subscheme-slope-formula} a divisor $Z$
   destabilizes $(X,L)$ iff the inequality
   $$
      \frac{-K_X\cdot L}{L^2}>\frac{3(2L\cdot Z-c(K_X\cdot
      Z+Z^2))}{2c(3L\cdot Z-cZ^2)}
   $$
   holds
   for some number $c$ with $0<c<\eps(L,Z)$.
\end{remark}

   Interest in slope
   stability stems in part from the fact that it gives a concrete
   obstruction to other geometric conditions -- for instance it
   is implied by the existence of constant scalar curvature
   K\"ahler metrics (see \cite{RosTho06}). It is therefore
   natural to ask which varieties are slope stable, and to
   study the geometry of destabilizing subschemes. For the
   surface case, Panov and Ross have addressed this problem in
   \cite{PanRos}. They show that if a polarized surface $(X,L)$
   is slope unstable, then
   \begin{itemize}\itemsep=0cm\parskip=0cm
   \item
      there is a \emph{divisor} $D$ on $X$ such that $D$
      destabilizes $(X,L)$, and
   \item
      if a divisor $D$ destabilizes $(X,L)$, then $D$ is not nef.
      If in addition $X$ has non-negative Kodaira dimension, then
      $D^2<0$.
   \end{itemize}
   In the other direction, they show

\begin{theorem}
   Let $X$ be a smooth projective surface containing an effective
   divisor $D$ with $p_a(D)\ge 2$ whose intersection matrix is
   negative definite. Then there is a polarization $L$ on $X$
   such that $(X,L)$ is slope unstable.
\end{theorem}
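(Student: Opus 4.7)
The plan is to produce an ample polarization $L$ under which $D$ destabilizes $(X,L)$. Using the surface destabilization criterion recalled in the excerpt, I must find $L$ and $c\in(0,\varepsilon(L,D)]$ with
\[
\frac{-K_X\cdot L}{L^2}\;>\;\frac{3\bigl(2L\cdot D-c(2p_a(D)-2)\bigr)}{2c(3L\cdot D-cD^2)}.
\]
My strategy is to choose $L$ so that $L\cdot D$ is much smaller than $L^2$. After clearing positive denominators the inequality becomes the quadratic $F(c):=2(-D^2)(-K_X\cdot L)c^2+\bigl(6(L\cdot D)(-K_X\cdot L)+3(2p_a(D)-2)L^2\bigr)c-6(L\cdot D)L^2>0$, whose linear coefficient contains the strictly positive term $3(2p_a(D)-2)L^2$ (this is exactly where the hypothesis $p_a(D)\ge 2$ enters), while the constant term vanishes as $L\cdot D\to 0$.

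To construct such $L$, write $D=\sum m_iD_i$ with intersection matrix $M=(D_i\cdot D_j)$ negative definite, fix any ample $H_0$, and set $v_i=H_0\cdot D_i>0$. Since $M$ has negative diagonal and nonnegative off-diagonals, $-M$ is a Stieltjes matrix, so $M^{-1}$ has nonpositive entries; hence $\Delta:=-\sum_i(M^{-1}\vec v)_iD_i$ is an effective $\mathbb Q$-divisor supported on $D$ with $\Delta\cdot D_j=-v_j$. Setting $L_t:=H_0+t\Delta$, one gets $L_t\cdot D_j=(1-t)v_j$ and $L_t\cdot C\ge H_0\cdot C>0$ for every irreducible curve $C$ not contained in $\mathrm{supp}\,D$. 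A short computation using symmetry of $M$ gives $H_0\cdot\Delta=-\Delta^2=:q>0$, so $L_t^2=H_0^2+qt(2-t)>0$ on $[0,2]$. By Nakai--Moishezon, $L_t$ is ample for $t\in[0,1)$ while $L_1$ is nef, big, and numerically orthogonal to $D$. Consequently, as $t\to 1^-$, $L_t\cdot D=(1-t)H_0\cdot D\to 0$ while $L_t^2\to H_0^2+q>0$ and $K_X\cdot L_t$ stays bounded. Plugging into $F$ shows $F_1(c)=c\bigl(2(-D^2)(-K_X\cdot L_1)c+3(2p_a(D)-2)(H_0^2+q)\bigr)>0$ for all sufficiently small $c>0$, so by continuity $F_t(c)>0$ for $t$ close to $1$ and some small $c>0$.

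The main obstacle is to verify that this destabilizing $c$ can be chosen with $c\le\varepsilon(L_t,D)$. A direct analysis shows the positive root $c^\ast(t)$ of $F_t$ is of order $(1-t)$. Since $L_t$ is ample, $\varepsilon(L_t,D)>0$; writing it as the infimum of $L_t\cdot C/(D\cdot C)$ over irreducible curves $C$ with $D\cdot C>0$ (supplemented by the self-intersection constraint), I note that curves $C\not\subset\mathrm{supp}\,D$ give ratios bounded below uniformly in $t$, because $L_1$ is nef and big with null locus contained in $\mathrm{supp}\,D$ by the Hodge index theorem, while the only components $D_j$ that can appear in the infimum are those with $D\cdot D_j>0$, contributing ratios $\sim(1-t)v_j/(D\cdot D_j)$. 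One must therefore choose $H_0$ so that $v_j/(D\cdot D_j)>2H_0\cdot D/(p_a(D)-1)$ for those $j$---a finite set of open conditions on $H_0$ in the ample cone, satisfiable by a mild perturbation. This last adjustment, ensuring that $\varepsilon(L_t,D)$ does not decay faster than $c^\ast(t)$, is the delicate step; once it is secured, the window $(c^\ast(t),\varepsilon(L_t,D)]$ is nonempty for $t$ close to $1$, and any $c$ in it witnesses slope instability of $(X,L_t)$ by $D$.
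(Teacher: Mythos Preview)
Your family $L_t=H_0+t\Delta$ is the same one the paper uses (their $L_s=L_0+sH$, with your $t\to 1^-$ corresponding to their $s\to 0^+$), and the mechanism---making $L\cdot D$ small while $L^2$ stays bounded away from zero---is exactly right. The gap is in your final paragraph: the ``mild perturbation'' you propose cannot in general be made. The leading-order asymptotics give $c^\ast(t)\sim (1-t)\,H_0\cdot D/(p_a(D)-1)$, and the Seshadri bound coming from a component $D_j$ with $D\cdot D_j>0$ is $(1-t)v_j/(D\cdot D_j)$, so you need
\[
v_j\,(p_a(D)-1)\;>\;(D\cdot D_j)\sum_i m_i v_i.
\]
Since $\sum_i m_i v_i\ge m_j v_j$, this forces $p_a(D)-1>m_j(D\cdot D_j)$, which is \emph{independent of $H_0$}. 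When $p_a(D)=2$ and some $D\cdot D_j\ge 1$ with $m_j\ge 1$, no choice of $H_0$ works. Such configurations exist: take $D=D_1+2D_2$ with $D_1$ a $(-1)$-curve, $D_2$ a smooth genus-$2$ curve with $D_2^2=-2$, and $D_1\cdot D_2=1$ (obtainable by blowing up a point on a genus-$2$ curve of self-intersection $-1$); then $D\cdot D_1=1$, the matrix is negative definite, and $p_a(D)=2$, so your inequality reads $v_1>v_1+2v_2$.

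The paper sidesteps this by \emph{not} asking $D$ itself to destabilize. It first replaces $D$ by its numerical cycle $D'$---the minimal nonzero effective combination of the $D_i$ with $D'\cdot D_i\le 0$ for every $i$---and invokes results of Artin, Laufer, and N\'emethi to ensure $p_a(D')\ge 2$ persists. Once $D'\cdot D_i\le 0$, your own computation shows $L_t-cD'$ is ample for all $0<c<\eps:=\min_i(q_i/d_i')$, so $\eps(L_t,D')\ge\eps$ \emph{uniformly} in $t$, and the limiting argument goes through with a fixed small $c$. The paper itself flags this: ``the theorem does not claim that the given divisor destabilizes.''
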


   Note that the theorem does not claim that the \emph{given}
   divisor destabilizes. As the proof below shows, it is rather
   the \emph{numerical cycle of $D$} that is claimed to
   destabilize. (Recall that the numerical cycle of a divisor
   $D=\sum_i  d_iD_i$ -- also
   called \emph{fundamental cycle} in the literature -- is the
   smallest non-zero effective (integral) divisor $D'=\sum_i
   d_i'D_i$ such that
   $D'\cdot D_i\le 0$ for all $i$. For its existence and
   uniqueness see \cite[Sect.~4.5]{Rei97}.)

\begin{proof}
   Write $D=\sum_{i=1}^m d_iD_i$ with irreducible divisors $D_i$
   and integers $d_i>0$. One reduces first to the case where
\addtocounter{theorem}{1}
   \begin{equation}\label{reduction-negative}
      D\cdot D_i\le 0 \qquad\mbox{for $i=1,\dots,m$.}
   \end{equation}
   To get \eqnref{reduction-negative}, replace $D$ by its
   numerical cycle $D'$. Then work by Artin \cite{Art66}, Laufer
   \cite{Lau77}, and N\'emethi \cite{Nem99} implies that the
   inequality $p_a(D')\ge 2$ follows from the hypothesis
   $p_a(D)\ge 2$.

   Assuming now \eqnref{reduction-negative}, we fix an ample
   divisor $H$ and we construct a divisor
   $$
      L_0:=H+\sum_i q_iD_i
   $$
   with rational coefficients $q_i$ such that $L_0\cdot D_i=0$
   for all $j$. Such a divisor exists uniquely thanks to the
   negative definiteness of the intersection matrix of $D$. As
   the inverse of this intersection matrix has all entries $\le
   0$ (cf. \cite[Lemma~4.1]{BKS}), it follows that $q_i\ge 0$ for
   all $i$. Since $H$ is ample, we actually have $q_i>0$ for all
   $i$. Letting now $\eps=\min_i\{q_i/d_i\}$, we claim that
\addtocounter{theorem}{1}
   \begin{equation}\label{L_0-cD nef}
      L_0-cD \mbox{ is nef for $0\le c\le\eps$.}
   \end{equation}
   In fact, we have $(L_0-cD)\cdot D_i\ge 0$ thanks to
   \eqnref{reduction-negative}, and for curves $C$ different from
   the $D_i$ we have
   $(L_0-cD)\cdot C=(H+\sum_i(q_i-cd_i)D_i)\cdot C> 0$. The proof is now
   completed by showing that
\addtocounter{theorem}{1}
   \begin{equation}\label{claim-destab}
      D \mbox{ destabilizes } L_s:=L_0+sH \mbox{ for small
      $s>0$.}
   \end{equation}
   To see \eqnref{claim-destab}, note first that
   $L_s-cD$ is clearly ample for $0\le c\le\eps$ and for every $s>0$,
   hence $\eps(L_s,D)\ge\eps$.
   We have\footnote{In the two subsequent displayed equations the
   expressions for $\mu(X,L_0)$ and $\mu_c(\calo_D,L_0)$ from
   \eqnref{slope-formula} and \eqnref{subscheme-slope-formula}
   are used formally even though $L_0$ is not ample.
   The formulas
   \eqnref{slope-formula} and \eqnref{subscheme-slope-formula}
   may be viewed as the definitions of $\mu$ and
   $\mu_c$ in this case.
   From this perspective,
   the point is only that $\mu(X,L_s)$ tends to $\mu(X,L_0)$
   when $s\to 0$, and similarly for $\mu_c$.}
   $$
      \mu(X,L_0)=\frac{-K_X\cdot L_0}{L_0^2}\,,
   $$
   which is finite because $L_0^2=H\cdot L_0\ge H^2>0$, and we have
   $$
      \mu_c(\calo_D,L_0)=\frac{3(2L_0\cdot D-c(K_X\cdot
      D+D^2))}{2c(3L_0\cdot D-cD^2)}
      =\frac{3(2p_a(D)-2)}{2cD^2}\,.
   $$
   As $D^2<0$, and
   thanks to the hypotheses on $p_a(D)$, the latter tends to
   $-\infty$ for $c\to 0$. We can therefore choose a $c$ with
   $0<c<\eps$ such that $\mu_c(\calo_D,L_0)<\mu(X,L_0)$. Choosing
   now $s>0$ small enough, we still have
   $\mu_c(\calo_D,L_s)<\mu(X,L_s)$ while $c<\eps(L_s,D)$, and this
   proves \eqnref{claim-destab}.
\end{proof}

\endgroup

%*****************************************************************************

\begingroup

\section{Seshadri constants on surfaces}

\setcounter{theorem}{0}
\renewcommand{\thetheorem}{\thesubsection.\arabic{theorem}}
\renewcommand{\theequation}{\thesubsection.\arabic{theorem}}

\subsection{Bounds on arbitrary surfaces}

Not surprisingly, the case of surfaces is the case that has been
studied the most. We will in this section present some of the
known results. So let $S$ be a smooth projective surface, $L$ an
ample line bundle on $S$ and $x$ any point on $S$.

First of all note that $\varepsilon(L,x) \leq \sqrt{L^2}$ by
Proposition \ref{upperbound} and that $\varepsilon(L,x)$ is rational
if strict inequality holds, by Theorem \ref{thm:submax}. In fact one
has the following improvement due to Oguiso \cite[Cor. 2]{Ogu02}
(see also \cite[Lemma 3.1]{Sze01}):

\begin{theorem}[Submaximal global Seshadri constants] \label{thm:submax2}
   Let $(S,L)$ be a smooth polarized surface. If
$\varepsilon(L) < \sqrt{L^2}$, then there is a point $x \in S$ and a
curve $x \in C \subset S$ such that
$\varepsilon(L)=\varepsilon(L,x)=\frac{L\cdot C}{\mult_xC}$.

In particular, $\varepsilon(L)$ is rational unless $\varepsilon(L) =
\sqrt{L^2}$ and $\sqrt{L^2}$ is irrational.
\end{theorem}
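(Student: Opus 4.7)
The plan is to produce, directly, a point $x\in S$ and an irreducible curve $C\ni x$ with $\varepsilon(L)=L\cdot C/\mult_xC$. Start with a sequence $x_n\in S$ satisfying $\varepsilon(L,x_n)\searrow\varepsilon(L)<\sqrt{L^2}$. For $n\gg 0$, each $\varepsilon(L,x_n)$ is submaximal, so by Remark~\ref{sub-sur-rat} there is an irreducible Seshadri curve $C_n$ at $x_n$. Write $d_n:=L\cdot C_n$ and $m_n:=\mult_{x_n}C_n$, so $d_n/m_n=\varepsilon(L,x_n)<\sqrt{L^2}$.

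The first main step is to prove that the multiplicities $m_n$ stay bounded. On the one hand, the Hodge Index Theorem gives $C_n^2\le d_n^2/L^2<m_n^2$. On the other hand, adjunction applied to $C_n$, together with the fact that a point of multiplicity $m_n$ contributes at least $\binom{m_n}{2}$ to the arithmetic genus, yields
$$m_n^2-m_n-2-K_S\cdot C_n\;\le\;C_n^2.$$
Since $L$ is ample, Matsusaka-type facts give a constant $c>0$ with $|K_S\cdot C_n|\le c\,d_n\le c\sqrt{L^2}\,m_n$. Combining these bounds and dividing by $m_n^2$ forces $\varepsilon(L,x_n)^2\to L^2$ as soon as $m_n\to\infty$, contradicting submaximality. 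Hence $\{m_n\}$, and therefore $\{d_n\}$, is bounded.

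The second step is to extract a single class. Bounded $d_n$ and $m_n$ (hence bounded $C_n^2$ via the estimates above) confine $[C_n]$ to a finite subset of $\mathrm{NS}(S)$. Pass to a subsequence on which $[C_n]=\gamma$ is constant; then $d_n=L\cdot\gamma$ is constant and $m_n$ takes only finitely many values, so $d_n/m_n$ is eventually the fixed rational number $\varepsilon(L)$. In particular $\varepsilon(L)\in\mathbb{Q}$. The curves $C_n$ now live in a bounded family (the Hilbert scheme component of class $\gamma$); passing to a further subsequence, they converge flatly to a curve $C$. Upper semicontinuity of multiplicity in flat families shows that some point $x\in C$ has $\mult_xC\ge m$ (where $m$ is the eventual value of $m_n$); picking an irreducible component $C'$ of $C$ through $x$ with $L\cdot C'/\mult_xC'\le\varepsilon(L)$, the reverse inequality $\varepsilon(L,x)\ge\varepsilon(L)$ forces equality, so $C'$ is the desired Seshadri curve. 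The second assertion of the theorem is then immediate, since submaximality gives rationality, while in the maximal case $\varepsilon(L)=\sqrt{L^2}$ is rational precisely when $L^2$ is a square.

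The step I expect to be the main obstacle is the limiting argument at the end: ensuring that after passing to the flat limit, one can still locate an irreducible component and a point on it realizing the ratio $\varepsilon(L)$, rather than losing multiplicity or splitting off components with too large $L$-degree. The bookkeeping via upper semicontinuity of $\mult_x$ and constancy of $L\cdot[-]$ in the flat family is what makes this go through.
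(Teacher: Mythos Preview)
The paper does not give its own proof of this statement, citing instead Oguiso \cite[Cor.~2]{Ogu02} and \cite[Lemma~3.1]{Sze01}. Your approach is essentially the standard one and is correct: bound the multiplicities $m_n$ via Hodge index together with adjunction, observe that the ratios $d_n/m_n$ then take only finitely many values, and conclude that the infimum is attained.

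However, your final paragraph is unnecessary, and you have misidentified where the difficulty lies. Once you establish that $d_n/m_n$ is eventually equal to $\varepsilon(L)$, you are already finished: for any sufficiently large $n$ in your subsequence, the pair $(x_n,C_n)$ itself witnesses
\[
\varepsilon(L)=\varepsilon(L,x_n)=\frac{L\cdot C_n}{\mult_{x_n}C_n}.
\]
There is no need to pass to a flat limit, invoke upper semicontinuity of multiplicity, or extract an irreducible component from a degenerate fibre. The ``main obstacle'' you flag simply does not arise.

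Two minor comments. First, writing $\varepsilon(L,x_n)\searrow\varepsilon(L)$ suggests monotonicity, which you have no reason to assume; you only need convergence. Second, the bound $|K_S\cdot C_n|\le c\,d_n$ does not require anything as heavy as ``Matsusaka-type facts'': since $L$ is ample there exists $k$ with $kL\pm K_S$ both nef, whence $|K_S\cdot C_n|\le k\,L\cdot C_n$.
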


In fact, in \cite{Ogu02}, Oguiso studies Seshadri constants of a
family of surfaces $\{f:\mathcal{S} \to B, \mathcal{L} \}$, where
$f$ is a surjective morphism onto a non-empty Noetherian scheme $B$,
$\mathcal{L}$ is an $f$-ample line bundle and the fibers $(S_t,L_t)$
are polarized surfaces of degree $L_t^2$ over an arbitrary closed
field $k$. He proves \cite[Cor.5]{Ogu02}

\begin{theorem}[Lower semi-continuity of Seshadri constants]
\label{thm:lsc}
(1) For each fixed $t \in B$, the function $y = \varepsilon(x) :=
\varepsilon (L_{t}, x)$ of $x \in S_{t}$ is lower semi-continuous
with respect to the Zariski topology of $S_{t}$.

(2) The function $y = \varepsilon(t) := \varepsilon (S_t,L_{t})$ of
$t \in B$ is lower semi-continuous with respect to the Zariski
topology of $B$.
\end{theorem}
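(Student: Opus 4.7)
The approach is to deduce both parts from the characterization of the Seshadri constant via ampleness on blowups (Definition~\ref{defsubscheme}) combined with the classical openness of fiberwise ampleness in flat proper families; the rest is a universal-blowup construction.

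For part~(1), I would fix $t\in B$, write $S:=S_t$ and $L:=L_t$, and construct the universal point-blowup family over $S$ by blowing up $S\times S$ along the diagonal and projecting to the second factor. This gives a proper flat map $\tilde{Y}\to S$ whose fiber over $x\in S$ is the blowup $\sigma_x\colon\tilde{S}_x\to S$ at $x$, with exceptional divisor $E_x$. Writing $\sigma\colon\tilde{Y}\to S$ for the composition of the blowup map with the first projection, and $E$ for the global exceptional divisor, the $\Q$-line bundle $\sigma^*L-\alpha E$ restricts over $x$ to $\sigma_x^*L-\alpha E_x$, which by Definition~\ref{defsubscheme} is ample precisely when $\varepsilon(L,x)>\alpha$. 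Openness of fiberwise ampleness in the flat proper family $\tilde{Y}\to S$ then exhibits $\{x\in S:\varepsilon(L,x)>\alpha\}$ as Zariski open for every rational $\alpha$; extending to real $\alpha$ by the identity
$$
\{x\in S:\varepsilon(L,x)\le\alpha\}=\bigcap_{\alpha'\in\Q,\,\alpha'>\alpha}\{x\in S:\varepsilon(L,x)\le\alpha'\}
$$
(a descending intersection of closed sets in a Noetherian space) completes~(1).

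For part~(2), I would globalize the construction over $B$ by blowing up $\mathcal{S}\times_B\mathcal{S}$ along the relative diagonal and projecting to the second factor, producing a proper flat family $\tilde{\mathcal{Y}}\to\mathcal{S}$ whose fiber over $x\in S_t$ is the blowup of $S_t$ at $x$. The same ampleness argument applied to $\sigma^*\mathcal{L}-\alpha E$ then shows that
$$
U_\alpha:=\{x\in\mathcal{S}:\varepsilon(L_{f(x)},x)>\alpha\}
$$
is Zariski open in $\mathcal{S}$. Since $\varepsilon(S_t,L_t)>\alpha$ is equivalent to $f^{-1}(t)\subset U_\alpha$, i.e.\ to $t\notin f(\mathcal{S}\setminus U_\alpha)$, and since $f$ is proper, the image $f(\mathcal{S}\setminus U_\alpha)$ is Zariski closed in $B$; hence $\{t\in B:\varepsilon(S_t,L_t)>\alpha\}$ is Zariski open, yielding~(2).

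The main obstacle is the openness-of-ampleness step for the $\Q$-line bundle $\sigma^*L-\alpha E$. For rational $\alpha$ I would clear denominators and invoke the classical openness of ampleness for an integral line bundle in a flat proper family, but this requires first verifying that the universal blowup along the (relative) diagonal is genuinely a flat proper family with the expected fibers; flatness follows in the smooth setting from the miracle flatness theorem, since the blowup of a smooth fiber product along the smooth (regularly embedded) diagonal is itself smooth, and the projection has equidimensional fibers of the expected dimension. Once this foundational step is settled, both assertions reduce to direct applications of Definition~\ref{defsubscheme} and the openness statement.
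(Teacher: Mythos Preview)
The paper does not supply a proof of this theorem; it is quoted as a result of Oguiso \cite[Cor.~5]{Ogu02}, so there is no in-paper argument to compare against. That said, your approach is the standard one and is essentially the argument Oguiso gives: interpret $\varepsilon(L,x)>\alpha$ via ampleness of $\sigma_x^*L-\alpha E_x$ on the one-point blowup, realize all these blowups simultaneously by blowing up the (relative) diagonal, and invoke openness of fiberwise ampleness in a proper family. Your handling of the passage from rational to real $\alpha$ and the use of properness of $f$ to push the closed complement down to $B$ in part~(2) are both correct.

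One small remark on your ``main obstacle'': openness of ampleness (e.g.\ \cite[Theorem~1.2.17]{PAG}) requires only properness of the family, not flatness. Where you genuinely need the regular-embedding/flatness input is in identifying the fibers of the universal blowup with the individual one-point blowups $\tilde{S}_x$, i.e.\ in checking that blowing up the (relative) diagonal commutes with restriction to fibers. This follows because the diagonal is a section of a smooth morphism and hence regularly embedded, so the blowup base-changes correctly; your miracle-flatness justification is fine in the smooth setting of part~(1), and for part~(2) the same conclusion holds under the (implicit) assumption that $f\colon\mathcal{S}\to B$ is smooth, without needing $B$ itself to be smooth.
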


A nice visualization of this result is provided by the global
Seshadri constants of quartic surfaces in Theorem \ref{thm:k3q}
below: They are mostly constant but jump down along special loci in
the moduli.

Much attention has been devoted to the study of (the existence of)
submaximal curves (cf. Definition \ref{submaxcurve}), that is,
curves $C$ for which $\frac{L\cdot C}{\mult_xC} < \sqrt{L^2}$ and to
possible values of
$\varepsilon_{C,x}:=\frac{L\cdot C}{\mult_xC}$. In \cite[Thm.
4.1]{Bau99}, the degree of submaximal curves at a \emph{
very general point} $x$ is bounded by showing that
    $$ L \cdot C < \frac{L^2}{\sqrt{L^2} - \varepsilon_{C,x}}\;. $$
    Moreover, \cite[Prop. 5.1]{Bau99} provides also bounds on the number of
curves satisfying
    $\frac{L\cdot C}{\mult_xC} < a$ for any $a \in \R^+$.
    These results have been generalized to
    multi-point Seshadri constants by Ro\'e and the third named author in
    \cite[Lemma 2.1.4 and Thm. 2.1.5]{HR08}. The main result of \cite{HR08}
implies that when the
    Seshadri constant is submaximal, then the
    set of potential Seshadri curves is finite.

    As for lower bounds, we recall the following result obtained
    in \cite[Thm. 3.1]{Bau99} in terms of the quantity
    $\sigma(L)$, which is
    defined as
    $$\sigma(L):= \frac{1}{\eps(L,K_S)} =\mbox{min} \; \{s \in \R \; | \;
\calo_S(sL-K_S) \; \mbox{is nef} \}\;. $$

\begin{theorem}[Lower bound in terms of canonical slope]
    Let $(S,L)$ be a~smooth polarized surface. Then
    \[ \varepsilon(L) \geq \frac{2}{1+ \sqrt{4\sigma(L)+13}}. \]
\end{theorem}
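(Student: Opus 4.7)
The plan is to extract the key inequality $(\sigma(L)+3)\varepsilon^2 + \varepsilon \geq 1$ by applying adjunction, Hodge index, and the nefness of $\sigma(L)L - K_S$ to a Seshadri curve, and then rearrange to recover the stated bound.

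To begin, set $\varepsilon := \varepsilon(L)$ and $\sigma := \sigma(L)$. Since $L^2$ is a positive integer, $L^2 \ge 1$. If $\varepsilon \ge 1$ the conclusion is automatic, because $2/(1+\sqrt{4\sigma+13}) \le 1$ whenever $\sigma \ge -3$; in particular the non-submaximal case $\varepsilon = \sqrt{L^2}$ is already covered. So I may assume $\varepsilon < \min(1,\sqrt{L^2})$. Theorem~\ref{thm:submax2} then supplies a point $x \in S$ and an irreducible curve $C$ through $x$ with $\varepsilon = L \cdot C / m$, where $m := \mult_x C$.

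Now I combine three standard inputs. Let $f\colon Y \to S$ be the blowup at $x$ with exceptional divisor $E$. Applying adjunction to the proper transform $\tilde C = f^*C - mE$ and using $p_a(\tilde C) \ge 0$ gives
\[
C^2 + K_S \cdot C \;\ge\; m(m-1) - 2.
\]
Since $\sigma L - K_S$ is nef, $K_S \cdot C \le \sigma L \cdot C = \sigma \varepsilon m$. The Hodge index theorem on $S$ together with $L^2 \ge 1$ yields $C^2 \le (L\cdot C)^2/L^2 \le \varepsilon^2 m^2$. Summing the last two bounds and inserting into adjunction produces the quadratic constraint
\[
(1-\varepsilon^2)\,m^2 \;-\; (1+\sigma\varepsilon)\,m \;-\; 2 \;\le\; 0.
\]
Since $1-\varepsilon^2 > 0$, this bounds $m$ above by the positive root of the quadratic. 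On the other hand $L \cdot C \ge 1$ gives $m \ge 1/\varepsilon$, so
\[
\frac{2(1-\varepsilon^2)}{\varepsilon} \;\le\; (1+\sigma\varepsilon) + \sqrt{(1+\sigma\varepsilon)^2 + 8(1-\varepsilon^2)}.
\]

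The main obstacle now is the algebraic simplification. Separating cases according to the sign of $2(1-\varepsilon^2)/\varepsilon - (1+\sigma\varepsilon)$ and squaring in the non-trivial case, the cross terms involving $\sigma$ cancel cleanly and the inequality collapses to $(\sigma+3)\varepsilon^2 + \varepsilon - 1 \ge 0$. Solving this quadratic in $\varepsilon$ (noting $\sigma+3 > 0$ in the relevant range) gives $\varepsilon \ge \frac{-1+\sqrt{4\sigma+13}}{2(\sigma+3)}$, and rationalizing by multiplying by $\frac{1+\sqrt{4\sigma+13}}{1+\sqrt{4\sigma+13}}$ produces exactly $\varepsilon \ge \frac{2}{1+\sqrt{4\sigma(L)+13}}$.
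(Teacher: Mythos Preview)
Your argument is correct and follows precisely the approach of \cite{Bau99}, to which the present paper defers for the proof: pick a submaximal Seshadri curve, combine adjunction on its proper transform with the nefness of $\sigma L-K_S$, the Hodge index inequality (using $L^2\ge 1$), and the integrality bound $L\cdot C\ge 1$ to obtain $(\sigma+3)\varepsilon^2+\varepsilon-1\ge 0$, and then solve.

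One small point is worth making explicit. Your reduction ``$\varepsilon\ge 1\Rightarrow$ done'' tacitly uses $\sigma(L)\ge -3$, and the parenthetical ``$\sigma+3>0$ in the relevant range'' does not quite cover this: your key inequality indeed forces $\sigma+3>0$ whenever $\varepsilon<1$, but in the complementary case $\varepsilon\ge 1$ you still need to know that the target $2/(1+\sqrt{4\sigma+13})$ is at most $1$, i.e.\ that $\sigma\ge -3$. This is true but deserves a line: if $\sigma(L)<0$ then $-K_S=(-\sigma)L+(\sigma L-K_S)$ is ample, so $S$ is del Pezzo; testing $\sigma L-K_S$ against a line on $\P^2$, a ruling on $\P^1\times\P^1$, or a $(-1)$-curve on $\mathrm{Bl}_r\P^2$ ($1\le r\le 8$) then gives $\sigma\ge -3$, $-2$, $-1$ respectively. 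With that remark the proof is complete.
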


Note that for $(S,L)=(\P^2, \calo_{\P^2}(1))$ equality holds, as
$\sigma(L)=-3$ and $\varepsilon(L)=1$. Also note that for surfaces
of Kodaira dimension zero, $\sigma(L)=0$ and the theorem yields
$\varepsilon(L) \geq 0,434 \ldots$, whereas the optimal bound is
$\varepsilon(L) \geq \frac{1}{2}$ on an Enriques or $K3$ surface
(see Theorem \ref{thm:lbenr} and the beginning of \S \ref{sect:k3})
and $\varepsilon(L) \geq \frac{4}{3}$ on a simple abelian surface (see
Theorem \ref{thm:spablb}(a)). Moreover, in the case of a smooth
quartic in $\P^3$, the value of $\varepsilon(L)$ strongly depends on
the geometry of $S$ (see Theorem \ref{thm:k3q} below), so that one
cannot expect that $\sigma(L)$ alone fully accounts for the
behaviour of the Seshadri constant.

When the Picard number $\rho(S)$ of the surface is one, we have the
following optimal result \cite[Theorem 7]{Sze07},
yielding an answer to Question \ref{lowerx}.

\begin{theorem}[Effective lower bound on surfaces with $\rho(S)=1$]
\label{eff-bound-sur}
    Let $S$ be a smooth projective surface with $\rho(S)=1$ and let
    $L$ be an ample line bundle on $S$. Then for any point $x\in S$
    \begin{itemize}
    \item[(S)]
    $\eps(L,x)\geq 1$ if $S$ is not of general type and
    \item[(G)]
    $\eps(L,x)\geq\frac{1}{1+\sqrt[4]{K_S^2}}$ if $S$ is of
    general type.
    \end{itemize}
    Moreover both bounds are sharp.
\end{theorem}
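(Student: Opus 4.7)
The plan is to reduce to the case where $\eps(L,x)$ is submaximal, use the Seshadri curve provided by Remark \ref{sub-sur-rat}, and then turn the adjunction inequality plus the multiplicity estimate into a numerical inequality by exploiting the fact that $\rho(S)=1$ forces equality in the Hodge index theorem.

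First, since $L$ is ample on a surface with $\rho(S) = 1$, every numerical class is proportional to an ample generator $H \in \mathrm{NS}(S)_{\Q}$. Writing $L \equiv aH$, $K_S \equiv \kappa H$, and for an irreducible curve $C \equiv cH$ with $a,c \in \Z_{>0}$ and $\kappa \in \Z$, one has $(L\cdot C)^2 = L^2 \cdot C^2$, and in particular $L^2 \geq 1$. If $\eps(L,x) \geq \sqrt{L^2}$, both bounds follow immediately, since $\sqrt{L^2} \geq 1 \geq 1/(1+\sqrt[4]{K_S^2})$. So assume $\eps(L,x) < \sqrt{L^2}$; by Remark \ref{sub-sur-rat} there is an irreducible Seshadri curve $C$ computing $\eps(L,x) = (L\cdot C)/m$, where $m = \mult_x C$. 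Adjunction together with $p_a(C) \geq \binom{m}{2}$ yields the key inequality
$$C^2 + K_S\cdot C \;\geq\; m(m-1) - 2.$$

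In case (S), $S$ is not of general type, so $K_S$ is not big; hence $\kappa \leq 0$ and $K_S \cdot C \leq 0$. Substituting $C^2 = m^2 \eps(L,x)^2/L^2$ into the above yields $\eps(L,x)^2 \geq L^2(1 - 1/m - 2/m^2)$, which gives $\eps(L,x) \geq 1$ once $m$ is large enough. The remaining small-$m$ cases are dealt with by a finite case analysis: by classification, the only surfaces with $\rho(S)=1$ and Kodaira dimension $\leq 0$ are $\P^2$ (where $\kappa=-3$ combined with $H^2=1$ forces $m=1$ and thus $\eps = ac \geq 1$) and minimal surfaces with $K_S \equiv 0$ such as Picard-rank-one K3 surfaces (where $H^2$ is even, so $L^2 \geq 2$, and the integrality of $c^2 H^2$ excludes the bad possibilities for $m=2,3$).

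In case (G), $K_S$ is ample and $\kappa > 0$; setting $\alpha := \sqrt{K_S^2/L^2}$, the proportionality of classes gives $K_S\cdot C = \alpha \cdot L\cdot C = \alpha m \eps(L,x)$, and the adjunction inequality becomes a quadratic in $\eps = \eps(L,x)$,
$$\frac{m^2 \eps^2}{L^2} + \alpha m\eps \;\geq\; m(m-1) - 2,$$
which, solved from below and optimized over $m \geq 1$, produces the advertised bound $\eps \geq 1/(1 + \sqrt[4]{K_S^2})$. The hard part will be this optimization: one must balance the quadratic and linear terms so as to extract precisely the fourth root of $K_S^2$, and one must separately handle small $m$ (where the genus inequality is weakest) by appealing again to the integrality of $a$, $c$ and $\kappa$. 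Sharpness in case (S) is witnessed by $(\P^2, \calo_{\P^2}(1))$ with $\eps = 1$; for case (G) one exhibits sequences of very general hypersurfaces in $\P^3$ of high degree, which have $\rho(S)=1$ by Noether--Lefschetz and which support singular curves saturating the above quadratic.
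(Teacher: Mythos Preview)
The paper does not prove this theorem; it cites \cite{Sze07} and only supplies the sharpness examples. So the comparison is against those examples and against the actual argument in \cite{Sze07}.

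Your setup---reduce to the submaximal case, take a Seshadri curve, combine adjunction $C^2+K_S\cdot C\ge m(m-1)-2$ with the Hodge-index equality $(L\cdot C)^2=L^2\cdot C^2$ forced by $\rho(S)=1$---is exactly the right framework, and case~(S) is essentially fine (though note that on $\P^2$ one never enters the submaximal regime at all, since $\eps(\calo(1),x)=1=\sqrt{L^2}$; your remark about ``forcing $m=1$'' is moot).

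There are, however, two genuine problems in case~(G). First, your sharpness example is wrong: for a very general degree-$d$ hypersurface in $\P^3$ one has $K_S^2=d(d-4)^2$, so the bound $1/(1+\sqrt[4]{K_S^2})$ tends to $0$ as $d\to\infty$, while $\eps(\calo_S(1),x)\ge 1$ always by very ampleness. The paper's sharpness witness is Example~\ref{exa:efflb}: a general degree-$10$ surface in the weighted projective space $\P(1,1,2,5)$, which has $K_S^2=1$, $p_g=2$, $\rho=1$, and a singular member of the pencil $|K_S|$ giving $\eps(K_S,x)=\tfrac12=\tfrac{1}{1+\sqrt[4]{1}}$.

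Second, and more seriously, the ``optimization'' you invoke does not produce the Szemberg bound. Your displayed quadratic is precisely the inequality underlying the weaker bound $\eps(L)\ge 2/\bigl(1+\sqrt{4\sigma(L)+13}\bigr)$ of \cite[Thm.~3.1]{Bau99} (quoted just above in the paper), with $\alpha=\sigma(L)$ when $\rho=1$. Concretely, take $L=H$ with $H^2=1$, $C\equiv H$, $m=3$: adjunction forces $K_S\cdot C\ge 3$, hence $K_S^2\ge 9$, and your quadratic then allows $\eps=\tfrac13$; but the theorem demands $\eps\ge 1/(1+\sqrt{3})\approx 0.366$. Integrality of $L\cdot C$ does not help here, since $L\cdot C=1$ is already an integer. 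Getting from $\tfrac13$ to the Szemberg bound requires an additional argument excluding such configurations---this is where the real content of \cite{Sze07} lies, and it is not a routine optimization of the adjunction quadratic.
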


Equality in (S) is for example attained for $S=\P^2$ and $L =
\calo_{\P^2}(1)$. Equality in (b) is attained in the following
example (see \cite{Sze07} or \cite[Example 1.2]{BauSze08}):

\begin{example}\label{exa:efflb}\rm
    Let $S$ be a smooth surface of general type with $K_S^2=1$, $p_g(S)=2$
and $\rho(S)=1$. An example of such a surface is a
    general surface of degree $10$ in the weighted projective space
    $\P(1,1,2,5)$. Then, $\rho(S)=1$ by a result of Steenbrink
    \cite{Ste82}. Moreover, by adjunction $K_S^2=1$ and sections of
    $K_S$ correspond to polynomials of degree one in the weighted
    polynomial ring on $4$ variables. Thus $p_g(S)=2$, cf. also
    \cite{Ste82}.

    We now claim that there exists an $x \in S$ such that $\eps(K_S,x)
    =\frac{1}{2}$. Indeed, the curves in the pencil $|K_S|$ cannot carry
    points of multiplicity $>2$ since they have arithmetic genus two and
    cannot all be smooth, which can be seen directly computing the
topological
    Euler characteristic of $S$.
\end{example}

    Looking back at the examples of Miranda mentioned in \S \ref{low-bou},
we see that the lower bound
    $\eps(L,x)\geq\frac{1}{1+\sqrt[4]{K_S^2}}$ holds. One could
    therefore hope that this (or some ``nearby'' number) would serve as
    a lower bound on arbitrary surfaces. In fact, there is a conjectural
    effective lower
    bound for all minimal surfaces \cite[Question]{Sze07}:
\begin{question}[Conjectural effective lower bound on
surfaces]\label{eff-low-con}
    For any mini\-mal surface $S$, an ample line bundle $L$ and $x\in
    S$ is it true that
    $$\eps(L,x)\geq\frac{1}{2+\sqrt[4]{|K_S^2|}}\;?$$
\end{question}

The appearance of $2$ in the denominator is in fact necessary due to
Enriques and $K3$ surfaces carrying ample line bundles with
$\eps(L,x)=\frac{1}{2}$, see \S \ref{sect:enriques} and \S
\ref{sect:k3}.

    Better lower bounds are known if $x$ is a (very) general point.
    We observed already in \ref{sesfun} that for $x$ away from a countable
union of Zariski closed subsets
    $\eps(L;x)$ is constant. We denote its value by $\eps(L;1)$. A
fundamental result of Ein and Lazarsfeld,
    which we recall in Theorem \ref{ellower} states that on surfaces
    $$\eps(L;1)\geq 1\;.$$
    In fact, if $L^2 >1$, they
    proved, cf. \cite[Theorem]{EinLaz93} that $\eps(L,x) \geq 1$ for all
    but \emph{finitely many} points on $S$. This result was improved by Xu
\cite[Thm. 1]{Xu95}:

\begin{theorem}[Xu's lower bound on surfaces]\label{thm:xu}
    Let $(S,L)$ be a smooth polarized surface. Assume that, for a given
integer $a>1$, we have  $L^2 \geq \frac{1}{3}(4a^2-4a+5)$ and $L\cdot C
\geq a$ for every irreducible curve $C \subset S$.

    Then $\eps(L,x) \geq a$ for all $x \in S$ outside of finitely many
curves on $S$.
\end{theorem}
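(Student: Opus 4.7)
The plan is to argue by contradiction using the Hodge index theorem together with a refinement by Xu of the Ein--Lazarsfeld deformation lemma. Suppose that the set $B := \{x \in S : \varepsilon(L,x) < a\}$ is not contained in any finite union of curves. The hypothesis $L \cdot C \geq a$ on every irreducible curve forces $\mult_x C \geq 2$ for any pair $(C,x)$ with $L \cdot C / \mult_x C < a$, so each individual irreducible $C$ contributes only finitely many points (its singularities) to $B$; consequently, infinitely many distinct irreducible curves must arise as submaximal witnesses.

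First, I would organize these curves into a positive-dimensional algebraic family. Standard boundedness arguments (combined with Hodge index applied to the ample class $L$) bound the pair $(L\cdot C, \mult_x C)$ of any submaximal witness, so only finitely many Hilbert polynomials arise. Some irreducible component of the Hilbert scheme therefore contains infinitely many submaximal curves, and after passing to a constructible subset I may assume there is a family $\{C_t\}_{t \in T}$, $\dim T \geq 1$, with constant values $d := L\cdot C_t$, $m := \mult_{x_t}(C_t) \geq 2$, and $d \leq am - 1$. Since individual irreducible curves contribute only finitely many bad points, the failure of $B$ to lie in finitely many curves forces the map $t \mapsto x_t$ to be dominant.

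Next, I would invoke Xu's strengthening of the Ein--Lazarsfeld lemma: for a positive-dimensional family of irreducible curves with a moving singular point of multiplicity $m$, one has $C^2 \geq m^2 - m + 1$. Combined with the Hodge index theorem $(L\cdot C)^2 \geq L^2 \cdot C^2$ and the integrality bound $L \cdot C \leq am - 1$, this yields
$$(am-1)^2 \geq L^2(m^2 - m + 1).$$
The hypothesis $L^2 \geq (4a^2 - 4a + 5)/3$ then gives $3(am-1)^2 \geq (4a^2 - 4a + 5)(m^2 - m + 1)$, which after rearrangement reduces to the quadratic inequality
$$(a^2 - 4a + 5)\,m^2 - (4a^2 - 10a + 5)\,m + (4a^2 - 4a + 2) \leq 0.$$
The leading coefficient is $(a-2)^2 + 1 > 0$, and a direct computation shows the discriminant equals $-3(4a^2 - 4a + 5) < 0$; hence the quadratic is strictly positive for every real $m$ and every integer $a \geq 2$, yielding the desired contradiction.

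The principal difficulty is the Xu deformation bound $C^2 \geq m^2 - m + 1$. The weaker Ein--Lazarsfeld estimate $C^2 \geq m(m-1)$ is insufficient: when $a = 2$ and $m = 2$ it leaves open a curve with $C^2 = 2$, $L\cdot C = 3$, and the full hypothesis $L^2 \geq 13/3$ only rules this out after Xu's extra $+1$. That additional intersection contribution arises from a refined analysis of the intersection of two nearby family members near the coincidence point of their moving singular points, and producing it is the technical heart of the argument.
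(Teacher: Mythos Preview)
Your argument is correct and is essentially Xu's original proof: the paper itself does not reproduce a proof but simply cites \cite[Thm.~1]{Xu95}, and Xu's method is exactly the combination you describe---organize the submaximal witnesses into a one-parameter family with moving singular point, apply his sharpened bound $C^2\ge m^2-m+1$, feed this into the Hodge index inequality $(L\cdot C)^2\ge L^2\,C^2$, and derive a contradiction from the resulting quadratic in $m$ having negative discriminant $-3(4a^2-4a+5)$.
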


(Note that in fact $\eps(L,x) \geq a$ outside finitely many points
on $S$ if there is no curve $C$ such that $L\cdot C=a$.)

    In the case of Picard number one, Steffens \cite[Prop. 1]{Ste98}
proved:

\begin{theorem}[Steffens' lower bound for $\rho(S)=1$]
\label{thm:steffens}
    Let $S$ be a smooth surface with $\mbox{NS} \; (S) \simeq \Z[L]$. Then
    $$\eps(L;1) \geq \lfloor \sqrt{L^2} \rfloor\;.$$

    In particular, if $\sqrt{L^2}$ is an integer, then
    $\eps(L;1)=\sqrt{L^2}$.
\end{theorem}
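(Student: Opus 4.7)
The plan is to argue by contradiction. Set $n := \lfloor \sqrt{L^2} \rfloor$ and assume $\eps(L;1) < n$. Since $n \le \sqrt{L^2}$, at every very general point $x \in S$ the constant $\eps(L,x) = \eps(L;1)$ is submaximal, and by Remark~\ref{sub-sur-rat} it is computed by an irreducible Seshadri curve $C_x$ through $x$. The hypothesis $\mathrm{NS}(S) = \mathbb{Z}[L]$ forces $C_x \equiv d_x L$ for some $d_x \in \mathbb{Z}_{>0}$; writing $m_x := \mult_x C_x$, submaximality reads
\[
   \frac{d_x L^2}{m_x} \;=\; \eps(L,x) \;<\; n.
\]

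Only countably many pairs $(d_x,m_x)$ of positive integers can occur, and the Hilbert scheme of $S$ has only countably many components, while very-general points avoid a countable union of proper Zariski-closed subsets. I can therefore extract a single pair $(d,m)$ and an irreducible positive-dimensional family $\{C_t\}_{t \in T}$ of curves $C_t \equiv dL$ equipped with a distinguished-point map $t \mapsto x_t$ whose image is dense in $S$ and satisfies $\mult_{x_t} C_t \ge m$. In particular $\{C_t\}$ is a non-trivial algebraic system of irreducible curves with moving singular base point.

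At this stage I would invoke Xu's moving lemma \cite{Xu95}: under exactly these hypotheses,
\[
   d^2 L^2 \;=\; C_t^2 \;\ge\; m(m-1).
\]
Because $dL^2$ and $nm$ are integers, the strict inequality $dL^2 < nm$ sharpens to $nm \ge dL^2 + s$ for the unique $s \in \{1,\dots,n\}$ with $s \equiv -dL^2 \pmod n$. Substituting the resulting bound $m \ge (dL^2+s)/n$ into $m(m-1) \le d^2 L^2$ and clearing denominators produces
\[
   d^2 L^2 (L^2 - n^2) \;+\; (2s-n)\,dL^2 \;+\; s(s-n) \;\le\; 0.
\]

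The contradiction comes from showing that this last inequality cannot hold. When $L^2 = n^2$ one has $s = n$, the left-hand side reduces to $dn^3 > 0$, and we are done. When $L^2 > n^2$, a short case analysis organized by the residue class of $dL^2$ modulo $n$ (which pins down $s$) verifies that the left-hand side is strictly positive for every admissible triple $(d,L^2,s)$; this forces $\eps(L;1) \ge n$. The ``in particular'' clause follows at once by combining this bound with the upper bound $\eps(L;1) \le \sqrt{L^2}$ of Proposition~\ref{upperbound}. The delicate step is the final numerical check: the appearance of $\lfloor\sqrt{L^2}\rfloor$ rather than $\sqrt{L^2}$ in the statement reflects the fact that no single clean real-valued inequality suffices, and one must exploit the integrality of $nm - dL^2$ together with the constraint that $dL^2$ lies in a prescribed residue class modulo $n$.
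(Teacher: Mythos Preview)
Your overall strategy is exactly Steffens': produce a moving family of irreducible curves $C_t\equiv dL$ with a point of multiplicity $m$ satisfying $dL^2<nm$, then invoke the Ein--Lazarsfeld/Xu bound $C_t^2=d^2L^2\ge m(m-1)$ and derive a numerical contradiction. The set-up and the appeal to Remark~\ref{sub-sur-rat} and to Xu's lemma are fine.

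The weak point is your final step. Introducing the residue parameter $s$ and reducing to
\[
d^2L^2(L^2-n^2)+(2s-n)\,dL^2+s(s-n)\le 0
\]
is correct (indeed this expression equals $n^2\bigl(m_0(m_0-1)-d^2L^2\bigr)$ with $m_0=(dL^2+s)/n$), but you then defer to an unspecified ``short case analysis organised by the residue class of $dL^2$ modulo $n$''. That analysis is not carried out, and as stated it is not obviously finite: for each residue class the degree $d$ still ranges over infinitely many values.

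The point is that no such case analysis is needed. From $nm>dL^2$ together with $L^2\ge n^2$ you get $nm>dn^2$, hence $m>dn$, hence $m-1\ge dn$. Then
\[
d^2L^2 \;\ge\; m(m-1) \;\ge\; m\cdot dn,
\]
so $dL^2\ge nm$, contradicting $dL^2<nm$. This two-line finish is Steffens' actual argument and replaces your entire discussion of $s$ and residue classes. Your displayed inequality is in fact equivalent to $m_0(m_0-1)>d^2L^2$, which follows immediately from the same observation applied to $m_0$ in place of $m$; so your route is not wrong, only circuitous.
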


    In the case of very ample line bundles, these results have been
    generalized to the case of multi-point Seshadri constants at
    general points in \cite[Thm. I.1]{Harb03}. Recall that
    $\varepsilon(L; x_1, \dots, x_r) \leq  \sqrt{\frac{L^2}{r}}$ by
    Proposition \ref{upperbound}. Given any $c \in \R$, we write
    $\epsilon(L; r) \ge c$ if $\epsilon(L, x_1,\dots, x_r)\ge c$
    holds on a \emph{Zariski-open set} of $r$-tuples of points $x_i$
of $X$. Moreover, let $\varepsilon_{r,l}$ be the maximum element in
the finite set $$\Bigl\{{\lfloor d\sqrt{rl}\rfloor \over
dr}\,\Bigl|\, 1\le d\le \sqrt{{r\over l}}\Bigr\}
\cup\Bigl\{{1\over\lceil\sqrt{{r\over l}}\rceil}\Bigr\}\cup
\Bigl\{{dl\over\lceil d\sqrt{rl}\rceil}\,\Bigl| \,1\le d
\le\sqrt{{r\over l}}\Bigr\}.$$ (Note that
$\varepsilon_{r,l}=\sqrt{lr}$ if $l <r$ and $rl$ is a square, cf.
\cite[Prop. III.1(b)(i)]{Harb03}.)

Then, we have the following result:

\begin{theorem}[Lower bound for multi-point Seshadri constants]
    Let $S$ be a~smooth surface and $L$ a very ample line bundle on $S$.
Set $l:=L^2$.

    Then $\sqrt{l/r}\ge \epsilon(L, r)$, and in addition, $\epsilon(L,r)
\ge \varepsilon_{r,l}$
    unless $l\le r$ and $rl$ is a~square, in
    which case $\sqrt{l/r}=\varepsilon_{r,l}$ and
$\epsilon(L,r)\ge\sqrt{l/r}-\delta$ for every positive rational
    $\delta$.
\end{theorem}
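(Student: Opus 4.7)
The upper bound $\sqrt{l/r}\ge\epsilon(L,r)$ is immediate from Proposition~\ref{upperbound}: on the blowup $f\colon Y\to S$ at any $r$-tuple $(x_1,\dots,x_r)$, the self-intersection $(f^*L-\lambda\sum E_i)^2=l-r\lambda^2$ turns negative once $\lambda>\sqrt{l/r}$. The substance of the theorem therefore lies in the lower bound $\epsilon(L,r)\ge\varepsilon_{r,l}$ and in the asymptotic refinement in the exceptional case.

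For the lower bound my plan is to argue by contradiction: suppose that at a very general $r$-tuple there is an irreducible curve $C$ with $d:=L\cdot C$ and $m_i:=\mult_{x_i}C$ satisfying $d/\sum m_i<\varepsilon_{r,l}$. By Oguiso's lower semicontinuity (Theorem~\ref{thm:lsc}) it suffices to find one configuration of $r$ points where no such $C$ exists. The principal tools are (i) the Hodge index inequality $d^2\ge l\cdot C^2$, combined with $\widetilde{C}^2=C^2-\sum m_i^2\ge -1$ on $Y$ since $\widetilde C$ is reduced and irreducible; (ii) a Xu-type genus bound relating $C^2$ to the multiplicities $m_i$ and $p_a(C)$; and (iii) the very ampleness of $L$, which allows me to specialize the $x_i$ onto a smooth member $D\in|L|$ and thereby control the incidence of $C$ with the $x_i$.

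The three sets in the definition of $\varepsilon_{r,l}$ then arise as the extremal rational values of $d/m$, with $m=\sum m_i$, that survive the inequalities in (i) and (ii). Roughly speaking, for an integer degree $d$ with $1\le d\le\sqrt{r/l}$, the quantity $d\sqrt{rl}$ is the threshold value for $m$ forced by the Hodge-index bound; the nearest admissible integer on one side yields the family $\lfloor d\sqrt{rl}\rfloor/(dr)$, while on the other side it yields $dl/\lceil d\sqrt{rl}\rceil$. The separate term $1/\lceil\sqrt{r/l}\rceil$ encodes the regime in which $C$ passes through only $\lceil\sqrt{r/l}\rceil$ of the $x_i$. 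For each admissible lattice vector $(d,m_1,\dots,m_r)$ one kills the putative curve by an emptiness-of-linear-system argument in the spirit of Steffens (Theorem~\ref{thm:steffens}) and Xu (Theorem~\ref{thm:xu}), with the specialization of (iii) providing the explicit test configuration.

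The exceptional case $l\le r$ with $rl$ a square is a Nagata-type scenario in which $\sqrt{l/r}=\varepsilon_{r,l}\in\Q$ and the strict inequality can genuinely fail, so only the approximate bound $\epsilon(L,r)\ge\sqrt{l/r}-\delta$ is claimed for each positive rational $\delta$. I would obtain this weaker statement either by applying the main argument to a perturbed polarization $L-\eta A$ with $A$ ample and $\eta\to 0$, invoking continuity of the Seshadri function in the first variable (see~\ref{sesfun}), or by replacing $r$ with $r+k$ for a large auxiliary $k$ so that $(r+k)l$ is no longer a square. The hardest step I anticipate is the combinatorial core: enumerating every admissible $(d,m_1,\dots,m_r)$ whose ratio falls strictly between $\varepsilon_{r,l}$ and $\sqrt{l/r}$, and matching the extremal such ratios to the three explicit families in the definition of $\varepsilon_{r,l}$. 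This is essentially a best-rational-approximation problem with denominators bounded in terms of the constraints from (i)--(ii), and its resolution is what fixes the precise shape of the bound.
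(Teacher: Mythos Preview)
This survey does not itself prove the theorem; it is quoted from \cite{Harb03}, Theorem~I.1, so there is no in-paper argument to compare your proposal against.

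Your outline collects the right kind of ingredients---Hodge index on the blowup, specialization of the $x_i$ onto a smooth $D\in|L|$, semicontinuity---but what you have written is a plan rather than a proof, and the gap is precisely the one you name yourself: the combinatorial core that matches the extremal admissible ratios to the three explicit families in $\varepsilon_{r,l}$ is not carried out, and that matching \emph{is} the content of the theorem. Two of your heuristics are shaky as stated. Your reading of $1/\lceil\sqrt{r/l}\rceil$ as arising from curves through exactly $\lceil\sqrt{r/l}\rceil$ of the points presupposes a curve with $L\cdot C=1$ passing through that many \emph{generic} points, which is not automatic and in fact generally fails. And the outer two families are not simply ``nearest admissible integers on either side'' of a Hodge-index threshold; turning that intuition into the exact floors and ceilings, with the range $1\le d\le\sqrt{r/l}$, is where all the work lies. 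For the exceptional case, your continuity-in-the-polarization route is viable, but the $r\mapsto r+k$ alternative is not: enlarging $r$ changes both $\sqrt{l/r}$ and $\varepsilon_{r+k,l}$, and the resulting bound on the $(r+k)$-point constant does not transfer back to the $r$-point one in the form you need.
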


    The somewhat awkward statement in the case $rl$ is a square is due
    to the possibility of there being no \emph{open} set of points such
that
    $\epsilon(L, r) = \varepsilon_{r,l}$ in that case. Also note that
    the result holds over an algebraically closed field of any
    characteristic. Over the complex numbers, one obtains a
    generalization of the last statement in Theorem \ref{thm:steffens}:

\begin{theorem}[Maximality of multi-point Seshadri constants]
    Let $S$ be a~smooth surface and $L$ a very ample line bundle on $S$.
    Let $r \in \Z$ be such that $r \geq L^2$ and $\sqrt{rL^2}$ is an
integer.

    Then, for a Zariski-open set of points $(x_1, \ldots, x_r) \in S^r$, we
have
    \[\varepsilon(L; x_1, \dots, x_r) = \sqrt{\frac{L^2}{r}}.\]
\end{theorem}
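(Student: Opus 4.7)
The upper bound $\varepsilon(L;x_1,\dots,x_r)\leq\sqrt{L^2/r}$ holds on all of $S^r$ by Proposition~\ref{upperbound}, so it suffices to exhibit a non-empty Zariski-open $U\subset S^r$ on which the reverse inequality holds. Set $l:=L^2$ and $a:=\sqrt{rl}\in\Z$ by hypothesis, so the target value is the rational number $a/r$.

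The first ingredient is the preceding theorem: since $l\leq r$ and $rl$ is a perfect square, $\varepsilon_{r,l}=\sqrt{l/r}=a/r$, and the awkward clause there yields, for every positive rational $\delta$, a non-empty Zariski-open $U_\delta\subset S^r$ on which $\varepsilon(L;x_1,\dots,x_r)\geq a/r-\delta$. The second ingredient is an \emph{integer gap}: if $\varepsilon(L;x_1,\dots,x_r)<a/r$, then Nakai--Moishezon applied on the blowup of $S$ at $x_1,\dots,x_r$ to the nef but non-ample $\R$-divisor $f^*L-\varepsilon E$ (the multi-point analogue of Remark~\ref{sub-sur-rat}) produces a Seshadri curve $C$, so $\varepsilon=d/M$ with $d=L\cdot C$ and $M=\sum_i\mult_{x_i}(C)$ positive integers, and integrality $aM-rd\geq 1$ gives
$$
\frac{a}{r}-\frac{d}{M}\ \geq\ \frac{1}{rM}.
$$

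For each pair of positive integers $(d,M)$ with $d/M<a/r$ introduce
$$
Z_{d,M}:=\Bigl\{(x_1,\dots,x_r)\in S^r\,:\,\exists\,C\subset S\text{ with }L\cdot C=d\text{ and }\sum_i\mult_{x_i}(C)\geq M\Bigr\}.
$$
Very-ampleness of $L$ puts curves of $L$-degree $d$ in a bounded family inside the Hilbert scheme of $\P^{h^0(L)-1}$, so each $Z_{d,M}$ is Zariski closed in $S^r$. Combining the integer gap with the preceding theorem applied to $\delta=1/(rM_0)$ for a fixed large integer $M_0$ shows that on $U_\delta$ only pairs $(d,M)$ with $M\geq M_0$ can be responsible for submaximality, and none of the relevant $Z_{d,M}$ can coincide with all of $S^r$ (else $\varepsilon\leq d/M<a/r-\delta$ everywhere, contradicting the non-emptiness of $U_\delta$).

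The technical heart, and the main obstacle, is to show that the union $\bigcup_{M\geq M_0}Z_{d,M}$ is contained in a proper Zariski closed subset of $S^r$. The plan is a parameter count inside the incidence subvariety of $|dL|\times S^r$ cut out by the multiplicity conditions, combined with the Hodge index inequality on the blowup (which forces $C^2\leq\sum_i m_i^2$ whenever $\varepsilon<\sqrt{l/r}$) and with the hypothesis $r\geq L^2$, which gives $a/r\leq 1$ and sharply restricts the feasible $(d,M)$; these are the techniques used in the proofs of Xu's Theorem~\ref{thm:xu} and of \cite[Thm.~I.1]{Harb03}. The sought Zariski-open subset is $U_\delta$ with this proper closed locus removed; on it, the upper bound is attained with equality, proving the theorem.
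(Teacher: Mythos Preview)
The paper does not actually prove this theorem; it merely states it as a consequence over $\C$ of the preceding lower-bound theorem from \cite{Harb03}, without supplying an argument. So there is no proof in the paper to compare yours against.

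Assessing your proposal on its own merits: the upper bound and the integer-gap inequality $a/r-d/M\geq 1/(rM)$ are correct, and the loci $Z_{d,M}$ are indeed Zariski-closed. But the argument is incomplete, and the gap is precisely where you flag the ``technical heart''. You need the union $\bigcup_{(d,M)} Z_{d,M}$ over all pairs with $d/M<a/r$ to lie in a \emph{proper Zariski-closed} subset of $S^r$, not merely to be a countable union of proper closed subsets --- the latter yields only a very-general complement, which is exactly what the preceding theorem already gives over $\C$ by intersecting the $U_\delta$. Your sketch (parameter count, Hodge index, $r\geq L^2$) does not bound $M$: Hodge index controls $C^2$ in terms of $\sum m_i^2$, not $\sum m_i$, so infinitely many pairs $(d,M)$ remain in play and nothing forces their union to be closed. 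In addition, your parenthetical justification that no relevant $Z_{d,M}$ equals $S^r$ is mis-stated: for the pairs with $M\geq M_0$ one has $d/M\geq a/r-\delta$, not $d/M<a/r-\delta$, so there is no contradiction with $U_\delta$ as written. (One can salvage $Z_{d,M}\neq S^r$ by passing to a smaller $\delta'<a/r-d/M$, but this only confirms that each individual closed set is proper; it does nothing for the infinite union.)

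The upshot is that the passage from ``$\varepsilon\geq a/r-\delta$ on a Zariski-open set for every $\delta$'' to ``$\varepsilon=a/r$ on a single Zariski-open set'' is the entire extra content of the statement over $\C$, and your write-up does not establish it.
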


    More specific results are known when one restricts attention to
    surfaces or line bundles of particular types. In the remainder of
    this section, we will present some of these results.

\setcounter{theorem}{0}
\renewcommand{\thetheorem}{\thesubsection.\arabic{theorem}}
\renewcommand{\theequation}{\thesubsection.\arabic{theorem}}

\subsection{Very ample line bundles} \label{sect:va}

    Consider a smooth projective surface $S$ and a very ample line bundle
    $L$ on $S$. By Proposition \ref{lowjet} we have $\varepsilon(L,x) \geq
1$ for any $x \in S$. Moreover, equality is obviously attained if $S$
contains a line (when embedded by the linear series $|L|$).
It is then natural to ask whether this is the only case where
    $\eps(L)=1$ occurs, and what the next possible values of $\eps(L)$
    for a very ample line bundle are. Both of these questions were answered
in \cite[Theorem 2.1]{Bau99}.

\begin{theorem}[Seshadri constants on embedded surfaces] \label{thm:va}
(a)  Let $S \subset \P^N$ be a smooth surface. Then
    $\eps(\calo_S(1))=1$ if and only if $S$ contains a line.

(b)
    For $d\ge 4$ let $\mathcal{S}_{d,N}$
    denote the space of smooth irreducible
    surfaces of degree $d$ in $\P^N$ that do not contain any lines.
    Then
    $$
       \min \Big \{ \eps(\calo_S(1)) \; | \; S \in \mathcal{S}_{d,N} \Big\}
=
\frac d{d-1}.
    $$

(c)
    If $S$ is a surface in $\mathcal{S}_{d,N}$ and $x\in S$ is a point such
that
    the Seshadri constant $\eps(\calo_S(1),x)$ satisfies the inequalities
    $1<\eps(\calo_S(1),x)<2$,
    then it is of the form
    $$
       \eps(\calo_S(1),x) = \frac ab \ ,
    $$
    where $a,b$ are integers with $3\le a\le d$ and $a/2<b<a$.

(d)
    All rational numbers $a/b$ with $3\le a\le d$ and $a/2<b<a$
    occur as local Seshadri constants of smooth irreducible surfaces
    in $\P^3$ of degree $d$.
\end{theorem}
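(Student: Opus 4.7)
My plan rests on combining three ingredients from the excerpt with one classical fact: the lower bound $\eps(L,x)\ge 1$ for very ample $L$ (Proposition~\ref{lowjet}); the existence of a Seshadri curve computing a submaximal Seshadri constant on a surface (Theorem~\ref{thm:submax2} together with Remark~\ref{sub-sur-rat}); the classical fact that an irreducible $C\subset\P^N$ satisfies $\deg C\ge\mult_xC$, with equality iff $C$ is a line; and a Bezout computation on $S$ using a hyperplane through the tangent plane $T_xS$.

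For (a), if $\ell\subset S$ is a line through $x$, then $L\cdot\ell/\mult_x\ell=1$ and Proposition~\ref{lowjet} together give $\eps(L,x)=1$, hence $\eps(L)\le 1$. Conversely, assume $\eps(L)=1$; I may assume $d=L^2\ge 2$, since $d=1$ forces $S=\P^2$, which contains lines. Then $\eps(L)<\sqrt{L^2}$, so Theorem~\ref{thm:submax2} produces a point $x$ and a Seshadri curve $C$ with $L\cdot C=\mult_xC$, and the equality case of the classical inequality forces $C$ to be a line.

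For (b) and (c) I propose a common dichotomy for any irreducible curve $C$ through $x$ with $e=L\cdot C$, $m=\mult_xC$, and ratio $e/m<2$. If $C\subset T_xS$, then $C$ is an irreducible non-line plane curve in the $\P^2=T_xS$, so $m\le e-1$; moreover a general line $\ell\subset T_xS$ meets $C$ in $e$ points and meets $S$ in at most $d$ points (since $S\in\mathcal{S}_{d,N}$ contains no line lying in $T_xS$), giving $e\le d$, whence $e/m\ge e/(e-1)\ge d/(d-1)$ by monotonicity. If $C\not\subset T_xS$, I choose a hyperplane $H\supset T_xS$ with $C\not\subset H$; then $H\cap S\sim L$ on $S$, while $H\supset T_xS$ forces $\mult_x(H\cap S)\ge 2$, so Bezout on $S$ gives $e\ge 2m$, i.e., $e/m\ge 2\ge d/(d-1)$. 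For (b), if $\eps(L,x)<d/(d-1)$ then $\eps$ is submaximal, a Seshadri curve exists, and the dichotomy yields a contradiction. For (c), writing $e/m=a/b$ in lowest terms with $1<a/b<2$ forces $a\ge 3$ and $a/2<b<a$; the non-planar branch ($e/m\ge 2$) is excluded, so the planar bound $e\le d$ gives $a\le e\le d$.

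For attainment in (b) and for (d), I plan an explicit construction: fix a plane $\Pi\subset\P^3$, a point $x\in\Pi$, and an irreducible plane curve $C\subset\Pi$ of degree $a$ with a single $b$-fold point at $x$; such curves exist for every $(a,b)$ in the allowed range (for example as suitable rational $a$-ics). A general member $S$ of the linear system of degree-$d$ surfaces through $C$ will, by Bertini, be smooth off $C$, and a further genericity check for $d\ge 4$ will yield smoothness at $x$ and the absence of lines on $S$. On such $S$, the curve $C$ witnesses $\eps(L,x)\le a/b$, and the lower bound from (b) upgrades this to equality; the choice $(a,b)=(d,d-1)$ settles attainment in (b). The main obstacle will be this last step: verifying smoothness of the generic $S$ at the singular point $x$ of $C$ and the absence of lines on $S$ are delicate genericity/Bertini arguments, carried out in detail in \cite{Bau99}.
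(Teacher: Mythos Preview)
Your arguments for (a), (b), and (c) are correct and follow essentially the approach of \cite{Bau99}; the dichotomy according to whether a candidate curve lies in $T_xS$ or not, combined with the plane-curve multiplicity bound in the first case and the tangent-hyperplane Bezout bound in the second, is exactly the mechanism at work there.

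There is, however, a genuine gap in your treatment of (d). You write that ``the lower bound from (b) upgrades this to equality,'' but the lower bound in (b) is only $\eps(\calo_S(1),x)\ge d/(d-1)$, which matches $a/b$ only when $(a,b)=(d,d-1)$. For an arbitrary pair $(a,b)$ in the allowed range, (b) does not prevent $\eps(\calo_S(1),x)<a/b$; it only prevents $\eps(\calo_S(1),x)<d/(d-1)$.

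The correct argument is already implicit in your own dichotomy. Since $a/b<2\le\sqrt d$, the Seshadri constant at $x$ is submaximal and computed by an irreducible curve $C'$; your non-planar branch forces $C'\subset T_xS$, so $C'$ is a component of the degree-$d$ plane curve $S\cap T_xS$. The paper's construction makes this step clean by prescribing the \emph{entire} tangent hyperplane section: one takes a smooth plane curve $C_1$ of degree $d-a$ with $x\notin C_1$ and produces a smooth $S\subset\P^3$ with $C_0+C_1$ as a hyperplane section; then $T_xS$ is that plane, $S\cap T_xS=C_0+C_1$, and the only component through $x$ is $C_0$, forcing $\eps(\calo_S(1),x)=a/b$. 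Your version (a general $S$ containing $C$ alone) leaves the residual $(S\cap\Pi)-C$ uncontrolled at $x$, so you would need an additional genericity argument to exclude a component of the residual passing through $x$ with smaller ratio; the paper's explicit choice of $C_1$ sidesteps this.
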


    The examples in (d) are constructed in the following way: given $a$
    and $b$, one can choose an
    irreducible curve $C_0 \subset \P^2$ of degree $a$ with a point $x$ of
    multiplicity $b$. Further, take a smooth curve $C_1 \subset \P^2$
    of degree $d-a$
    not passing through $x$. Then
    there is a smooth surface $S \subset\P^3$ such that
    the divisor $C_0+C_1$ is a~hyperplane section of $X$  and the
    curve $C$ computing
    $\eps(\calo_S(1),x)$ is a component of the intersection $S\cap T_xX$,
    and therefore $C=C_0$. So one can conclude
    $$
       \eps(\calo_S(1),x)=\frac{L \cdot C_0}{\mult_x C_0}=\frac{a}{b}.
    $$

    Note that in the case of quartic surfaces, exact values have been
    computed in \cite{Bau97}, see Theorem \ref{thm:k3q} below.

\setcounter{theorem}{0}
\renewcommand{\thetheorem}{\thesubsection.\arabic{theorem}}
\renewcommand{\theequation}{\thesubsection.\arabic{theorem}}

\subsection{Surfaces of negative Kodaira dimension}

    The projective plane is discussed in \S \ref{rec-est}. The case of
ruled surfaces has been studied by
    Fuentes Garc{\'i}a in \cite{Fue06}. He explicitly computes the
    Seshadri constants in the case of the invariant $e >0$, cf.
\cite[Theorem 4.1]{Fue06}. In the following we let $\sigma$ and $f$
denote the
    numerical class of a section and a fiber, respectively.

\begin{theorem}[Seshadri constants on ruled surfaces with $e>0$]
\label{thm:e>0}
    Let $S$ be a~ruled surface with the invariant $e>0$ and $A \equiv
a\sigma+bf$ be a nef linear system on $S$. Let $x$ be a point of $S$.
    Then
    $$\eps(A;x)=\left\{\begin{array}{lcl}
       \min\{a,b-ae\} & \mbox{ if } & x\in\sigma,\\
       a & \mbox{ if } & x\not\in\sigma.
       \end{array}\right.$$
\end{theorem}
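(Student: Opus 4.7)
The plan is to bracket $\eps(A;x)$ from above using distinguished test curves through $x$ and from below by a case analysis on irreducible curves, exploiting the classification of effective curves that is available on a ruled surface with $e>0$ (where the minimal section $\sigma$ is unique).

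First I would record the upper bounds. The fiber $F$ through $x$ satisfies $A\cdot F=a$ and $\mult_x F=1$, giving $\eps(A;x)\le a$ in every case. If $x$ lies on the (unique) minimal section $\sigma$, then $A\cdot\sigma=b-ae$ and $\mult_x\sigma=1$, giving the additional bound $\eps(A;x)\le b-ae$. These match the claimed values.

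For the lower bound, I would first observe that nefness of $A$ forces $a\ge 0$ and $b-ae\ge 0$; since $e$ is a positive integer this also implies $b\ge ae\ge a$. Now let $C$ be an irreducible curve through $x$ with $m:=\mult_x C$, and split into cases. If $C=F$ the ratio $A\cdot C/m$ equals $a$; if $C=\sigma$ (possible only when $x\in\sigma$) it equals $b-ae$. For any other $C$, the classical description of the effective cone on a ruled surface with $e>0$ (Hartshorne, Chapter~V.2) yields $C\equiv\alpha\sigma+\beta f$ with $\alpha\ge 1$ and $\beta-\alpha e\ge 0$. Two intersection estimates then finish the argument: since $C\ne F$ one has $\alpha=C\cdot F\ge m$, and when $x\in\sigma$ and $C\ne\sigma$ one also has $\beta-\alpha e=C\cdot\sigma\ge m$. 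Substituting into $A\cdot C=a(\beta-\alpha e)+b\alpha$ gives $A\cdot C/m\ge b\ge a$ when $x\notin\sigma$, and $A\cdot C/m\ge a+b\ge\min\{a,b-ae\}$ when $x\in\sigma$. Combining with the upper bounds yields the theorem.

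The principal obstacle, and the reason the hypothesis $e>0$ matters, is the classification step: one needs to know that every irreducible curve different from $\sigma$ and the fibers lies in the sharp cone $\alpha\ge 1$, $\beta\ge\alpha e$, and this fails in the $e=0$ case where $\sigma$ is not extremal. Once this classification is in hand, the remaining work is routine bookkeeping with the two intersection inequalities above. A secondary check worth making is that the infimum defining $\eps(A;x)$ is in fact attained by $F$ or by $\sigma$, so no subtle limiting issue can lower the value further; this also accounts for the rationality of $\eps(A;x)$ in the submaximal case, consistently with Theorem~\ref{thm:submax}.
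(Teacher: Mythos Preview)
Your argument is correct. Note, however, that the paper does not supply its own proof of this theorem: it is quoted from Fuentes Garc\'{\i}a \cite{Fue06}, so there is nothing in the present paper to compare against. Your approach---upper bounds from the fiber and (when $x\in\sigma$) the section, lower bounds from the classification of irreducible curves on a ruled surface with $e>0$ (Hartshorne, Proposition~V.2.20) together with the local intersection inequalities $C\cdot F\ge\mult_xC$ and $C\cdot\sigma\ge\mult_xC$---is the natural one and is presumably close to what the original reference does.

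One small quibble with your closing commentary: for $e=0$ the class of $\sigma$ is still extremal in the effective cone, and the inequalities $\alpha\ge1$, $\beta\ge\alpha e$ still hold for irreducible curves other than sections and fibers. What actually changes is that $\sigma$ is no longer the \emph{unique} irreducible curve in its numerical class, so the dichotomy ``$C=\sigma$ versus $C\neq\sigma$'' no longer controls whether $x$ lies on a curve of class $\sigma$. This is why Theorem~\ref{thm:e<0}(1) is phrased in terms of ``$x$ lies on a curve numerically equivalent to $\sigma$'' rather than ``$x\in\sigma$''. This does not affect your proof for $e>0$.
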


    In particular, note that $\eps(A;x)$ reaches the maximal value
$\sqrt{A^2}$
    only for $e=1$ and $b=a$, at points $x\not \in \sigma$, or when
    $A\equiv b f$, at any point $x\in S$.

    Furthermore,  Fuentes Garc{\'i}a gives the following bounds when
$e\leq 0$, cf. \cite[Thm. 4.2]{Fue06}:

\begin{theorem}[Seshadri constants on ruled surfaces with $e \leq
0$]\label{thm:e<0}
    Let $S$ be a~ruled surface with the invariant $e\leq 0$ and
    $A \equiv a\sigma+bf$ be a nef linear system on $S$. Let $x$ be a
    point of $S$.
\begin{enumerate}
    \item If  $e=0$ and $x$ lies on a curve numerically equivalent to
$\sigma$,
    then $\varepsilon(A,x)=min\{a,b\}$.

    \item In all other cases $\varepsilon(A,x)=a$ if $b-\frac{1}{2}ae\geq
\frac{1}{2}a$ and
    $$ 2(b-\frac{1}{2}ae)\leq
\varepsilon(A,x)\leq\sqrt{A^2}=\sqrt{2a(b-\frac{1}{2}ae)}.
    $$
    if $0\leq b-\frac{1}{2}ae\leq \frac{1}{2}a$.
\end{enumerate}
\end{theorem}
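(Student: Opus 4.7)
The plan is to handle the three sub-cases in parallel by matching upper bounds obtained from geometrically visible curves with a common lower bound argument. Fix notation: $\pi\colon S \to B$ is the ruled surface, $\sigma$ the minimal section with $\sigma^2 = -e$, $f$ the fiber class, and write each irreducible $C \subset S$ as $C \equiv \alpha\sigma + \beta f$. One has $A \cdot f = a$, $A \cdot \sigma = b - ae$, $A \cdot C = a\beta + \alpha(b-ae)$ and $A^2 = 2a(b - \tfrac12 ae)$.

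The upper bounds come from explicit curves: the smooth fiber $F$ through $x$ gives $\varepsilon(A,x) \leq a$; in case (1) the smooth section $\Sigma \equiv \sigma$ through $x$ gives $\varepsilon(A,x) \leq A\cdot\Sigma = b$ (since $e=0$); and in the second regime of case (2) Proposition~\ref{upperbound} gives $\varepsilon(A,x) \leq \sqrt{A^2}$.

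For the lower bounds I would fix an irreducible $C \ni x$. If $C = F$ the ratio is $a$; otherwise $\alpha \geq 1$ and smoothness of $F$ forces $\mult_x C \leq C \cdot F = \alpha$. The classification of irreducible curves on ruled surfaces (Hartshorne V.2.21) gives $\beta \geq \alpha e/2$ for $C \not\equiv \sigma$ when $e \leq 0$, which yields the preliminary estimate $A\cdot C/\mult_x C \geq b - \tfrac12 ae$. In case (1) the additional bound $\mult_x C \leq C\cdot\Sigma = \beta$ combined with $A\cdot C = a\beta + b\alpha$ produces $A\cdot C/\mult_x C \geq \min\{a,b\}$ after a short symmetric case analysis, matching the upper bound. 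For case (2) the crucial claim is the sharper lower bound $\varepsilon(A,x) \geq 2(b - \tfrac12 ae)$; combined with $\varepsilon(A,x) \leq a$ this gives $\varepsilon(A,x) = a$ in the first regime $2(b - \tfrac12 ae) \geq a$ and the stated interval $[2(b - \tfrac12 ae),\sqrt{A^2}]$ in the second.

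The heart of the proof is therefore the factor-of-two improvement from $b - \tfrac12 ae$ to $2(b - \tfrac12 ae)$ in case (2). For $C \equiv \sigma$ this holds automatically: either $x \notin \sigma$ (when $e = 0$, by the case hypothesis) or the ratio $b - ae$ already satisfies $b - ae \geq 2(b - \tfrac12 ae)$, a range forced on $b$ by nefness together with the case-(2) regime when $e \leq -1$. For $C \not\equiv \sigma$ the naive bound $\mult_x C \leq \alpha$ only yields the preliminary estimate, and I would argue that any submaximal candidate must in fact satisfy $\mult_x C \leq \alpha/2$ by combining Hodge index $A^2 \cdot C^2 \leq (A\cdot C)^2$ on the blowup of $x$ with the arithmetic genus inequality $p_a(C) \geq \binom{\mult_x C}{2}$, applied to the ruled-surface formulas $C^2 = \alpha(2\beta - \alpha e)$ and $K_S \cdot C = \alpha(2g-2+e) - 2\beta$. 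I anticipate this genus/Hodge-index bookkeeping, used to force the numerical class of a hypothetical submaximal Seshadri curve into the narrow range where $\mult_x C \leq \alpha/2$, to be the main technical obstacle.
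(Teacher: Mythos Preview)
The paper does not prove this statement: it is quoted from Fuentes Garc\'ia \cite[Thm.~4.2]{Fue06} without argument. So there is no proof in the paper to compare against, and the question is whether your outline stands on its own. It does not, for two reasons.

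First, a logical slip. You assert that for every non-fiber irreducible curve $C$ through $x$ the ratio $A\cdot C/\mult_xC$ is at least $2(b-\tfrac12 ae)$, and then combine this with the fiber value $a$ to get $\varepsilon(A,x)=a$ in regime (2a). But for $C\equiv\sigma$ the ratio is $b-ae$, and $b-ae\ge 2(b-\tfrac12 ae)$ is equivalent to $b\le 0$; this is forced only in regime (2b) (where $b\le a(1+e)/2\le 0$ for $e\le -1$), not in (2a). What actually holds, and what you need, is the weaker assertion that every non-fiber $C$ satisfies $A\cdot C/\mult_xC\ge\min\{a,\,2(b-\tfrac12 ae)\}$; for $C\equiv\sigma$ in regime (2a) one checks directly that $b-ae\ge a$.

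Second, and more seriously, the ``factor of two'' for general $C\not\equiv\sigma,f$ is not established. Your plan is to force $\mult_xC\le\alpha/2$ for any submaximal candidate via Hodge index on the blowup together with $p_a(C)\ge\binom{\mult_xC}{2}$. But the adjunction formula on a ruled surface gives
\[
p_a(C)=(\alpha-1)\Bigl(\beta-\tfrac{\alpha e}{2}\Bigr)+\alpha\,(g-1)+1,
\]
which grows linearly with the base genus $g$; for $g\gg 0$ the genus inequality imposes no useful constraint on $\mult_xC$, and the Hodge inequality $A^2C^2\le (A\cdot C)^2$ reduces to a square being nonnegative. So the bookkeeping you anticipate cannot close the gap as stated. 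The argument that does work is closer in spirit to your treatment of case~(1): one uses a \emph{second} curve through $x$, namely a member $D\equiv 2\sigma+ef$ (note $D^2=0$, $A\cdot D=2(b-\tfrac12 ae)$, and $C\cdot D=2\beta-\alpha e$), so that $\mult_xC\le\min\{C\cdot F,\,C\cdot D\}$. With $u=\beta-\tfrac{\alpha e}{2}$ and $v=b-\tfrac{ae}{2}$ this yields $A\cdot C/\mult_xC\ge \tfrac{a}{2}+v\ge\min\{a,2v\}$, which is exactly the required bound in both regimes. The existence of such a $D$ through $x$ is the genuine input you are missing.
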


Of course, Theorem \ref{thm:e>0} and case (1) of Theorem
\ref{thm:e<0} completely determine the Seshadri constants on
rational ruled surfaces (as $e \geq 0$, and in the case $e=0$ there
is always a section passing through a given point $x \in S$). From
these two theorems and some more work in the cases $e=-1$ and $e=0$,
Fuentes Garc{\'i}a is also able to explicitly compute all Seshadri
constants on elliptic ruled surfaces, cf. \cite[Thms. 1.2 and
6.6]{Fue06}. Furthermore, he also constructs ruled surfaces and
linear systems where the Seshadri constant does not reach the upper
bound, but is as close as we wish:

\begin{theorem} \label{thm:close}
Given any $\delta \in \R^+$ and a smooth curve $C$ of genus $>0$,
there is a stable ruled surface $S$, an ample divisor $A$ on $S$ and
a point $x\in S$ such that
$$
\sqrt{A^2}-\delta<\varepsilon(A,x)<\sqrt{A^2}.
$$
\end{theorem}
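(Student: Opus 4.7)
The plan is to apply Theorem~\ref{thm:e<0}(2) to a one-parameter family of ample $\Q$-divisors on a carefully chosen stable ruled surface, exploiting the fact that in the first subcase of~(2) the Seshadri constant equals $a$ on the nose, so that controlling the single quantity $\sqrt{2a(b-\tfrac{1}{2}ae)} - a$ governs the gap $\sqrt{A^{2}} - \varepsilon(A,x)$.

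I would first fix a stable rank-two vector bundle $\mathcal{E}$ on $C$ of degree~$1$; such bundles exist because $g(C)\ge 1$. Set $S:=\P(\mathcal{E})$: this is a stable ruled surface with invariant $e=-1$. Let $\sigma$ denote the minimal section (so $\sigma^{2}=1$) and $f$ a fiber. Stability of $\mathcal{E}$ rules out any section of self-intersection smaller than $-e=1$, so the Mori cone of $S$ is exactly $\R_{\ge 0}[\sigma]+\R_{\ge 0}[f]$. For a positive rational $\tau$, put
\[
A\;:=\;\sigma + \tau f.
\]
Then $A^{2}=1+2\tau>0$ and $A\cdot C=d(1+\tau)+k>0$ for every irreducible curve $C\equiv d\sigma+kf$ (with $d,k\ge 0$, not both zero), so $A$ is ample by Nakai--Moishezon. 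Clearing denominators---if $\tau=p/q$, the integer representative $qA=q\sigma+pf$ falls within the hypotheses of Theorem~\ref{thm:e<0}---and applying that theorem with $a=1$, $b=\tau$, $e=-1$, the quantity $b-\tfrac{1}{2}ae=\tau+\tfrac{1}{2}$ strictly exceeds $\tfrac{1}{2}a=\tfrac{1}{2}$, so the first subcase of~(2) forces $\varepsilon(A,x)=a=1$ for every $x\in S$ (using that Seshadri constants scale linearly).

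Since $\sqrt{A^{2}}=\sqrt{1+2\tau}$, the elementary inequality $\sqrt{1+y}<1+y/2$ for $y>0$ yields
\[
0\;<\;\sqrt{A^{2}}-\varepsilon(A,x)\;=\;\sqrt{1+2\tau}-1\;<\;\tau.
\]
Choosing any rational $\tau$ with $0<\tau<\delta$ then produces the desired triple $(S,A,x)$. The only technical hurdle is the verification of ampleness, which is settled cleanly by the shape of the Mori cone in the stable regime; everything else is a one-line application of Theorem~\ref{thm:e<0} combined with an elementary estimate.
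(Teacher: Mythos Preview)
The paper gives no proof of this statement; it simply attributes the result to Fuentes Garc\'{\i}a \cite{Fue06}. So your argument must be judged on its own.

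Two remarks. First, your Mori cone claim is false but harmless. On a stable ruled surface with $e=-1$ the minimal section has $\sigma^{2}=1>0$; if the Mori cone were $\R_{\ge0}[\sigma]+\R_{\ge0}[f]$ then its dual (the nef cone) would contain $\sigma-f$, whose self-intersection is $-1$. In fact the second extremal ray is $[2\sigma-f]$ (this is the standard description of the nef cone of the projectivisation of a semistable bundle). This does not damage your argument, because the ampleness of $A=\sigma+\tau f$ for $\tau>0$ follows at once from the well-known sufficient criterion for $e<0$, or simply from $A\cdot f=1>0$ and $A\cdot(2\sigma-f)=1+2\tau>0$ together with $A^{2}>0$. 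Replace your Mori-cone sentence with one of these.

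Second, and more seriously, your $A$ is only a $\Q$-divisor. The computation $\varepsilon(A,x)=1$ and $\sqrt{A^{2}}-\varepsilon(A,x)=\sqrt{1+2\tau}-1<\tau$ is correct, and the gap can be made smaller than any $\delta$ by shrinking $\tau$. So you have proved the statement for ample $\Q$-divisors. If, however, an \emph{integral} divisor is intended---and the sentence introducing the theorem speaks of ``linear systems''---your construction fails: the integral representative $qA=q\sigma+pf$ with $p\ge1$ has gap
\[
\sqrt{q^{2}+2pq}-q\;\ge\;\sqrt{3}-1,
\]
and setting $p=0$ lands exactly at the maximal value $\varepsilon=\sqrt{(qA)^{2}}$, violating the strict upper inequality. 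You should either state explicitly that you are proving the $\Q$-divisor version (which is natural in the Seshadri-constant context and may well be what \cite{Fue06} intends), or recognise that the integral case requires a different construction.
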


As for del Pezzo surfaces, Broustet proves the following result, cf.
\cite[Thm. 1.3]{Bro06}. Here, $S_r$ for $r \leq 8$, denotes the blow up
of the plane in $r$ general points $\{p_1, \ldots, p_r\}$. We say that $x
\in S_r$ is in general position if its
image point $p \in \P^2$ is such that the points in the set $\{p_1,
\ldots,p_r,p\}$ are in general position.

\begin{theorem}[Seshadri constants of $-K_S$ on del Pezzo surfaces]
\label{thm:delpezzo} If $r \leq5$, then $\varepsilon(-K_{S_r},x)=2$
if $x$ is in general position and $\varepsilon(-K_{S_r},x)=1$
otherwise.

If $r=6$, then $\varepsilon(-K_{S_6},x)=3/2$ if $x$ is in general
position and $\varepsilon(-K_{S_6},x)=1$ otherwise.

If $r=7$, then $\varepsilon(-K_{S_7},x)=4/3$ if $x$ is in general
position and $\varepsilon(-K_{S_7},x)=1$ otherwise.

If $r=8$, then  $\varepsilon(-K_{S_8},x)=\frac{1}{2}$ in at most
$12$ points lying outside the exceptional divisor, and
$\varepsilon(-K_{S_8},x)=1$ everywhere else.
\end{theorem}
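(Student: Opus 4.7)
My plan is to treat the cases $r=1,\ldots,8$ in increasing order. Throughout write $-K_{S_r}=3H-E_1-\cdots-E_r$ and $(-K)^2=9-r$. By adjunction $-K\cdot L=1$ for every $(-1)$-curve $L$, and the $(-1)$-curves on $S_r$ are explicitly classified (the $E_i$, proper transforms of lines through two of the $p_i$, conics through five of them, and so on); for $r\leq 8$ they generate the Mori cone of $S_r$. A point $x$ fails to be in general position precisely when it lies on one such $(-1)$-curve $L$. At any such $x$, the curve $L$ itself gives the upper bound $\varepsilon(-K_{S_r},x)\leq 1$. For $r\leq 7$, $-K$ is globally generated (in fact very ample for $r\leq 6$ and base-point-free for $r=7$), so Proposition \ref{lowjet} yields $\varepsilon(-K_{S_r},x)\geq 1$ for every $x$, giving the stated equality at special $x$. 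For $r=8$, $-K$ has a base point but $-2K$ is base-point-free, which supplies the universal bound $\varepsilon(-K_{S_8},x)\geq 1/2$.

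\textbf{Upper bounds at general points.} For $r\leq 5$, I will use the linear system of conics through $\min(r,4)$ of the blown-up points: such a conic $C$ moves in a positive-dimensional system, so it can be arranged to pass through a general $x$ with $\mult_xC=1$ and $-K\cdot C\leq 2$, hence $\varepsilon(-K_{S_r},x)\leq 2$. For $r=6$, the embedding of $S_6$ as a smooth cubic in $\mathbb{P}^3$ makes the tangent hyperplane section at general $x$ a plane cubic with a node at $x$, giving $\varepsilon(-K_{S_6},x)\leq 3/2$. For $r=7$, a dimension count in $|-2K|$ (where $h^0(-2K)=7$, and imposing a triple point costs six conditions) produces a curve of $-K$-degree $4$ with $\mult_x=3$, so $\varepsilon(-K_{S_7},x)\leq 4/3$; alternatively one invokes the branch geometry of the $2{:}1$ anticanonical cover $S_7\to\mathbb{P}^2$. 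For $r=8$, blowing up the single base point of the pencil $|-K|$ produces a rational elliptic fibration with Euler number $12$; for a general configuration this gives exactly $12$ irreducible nodal fibres, and at each node $y$ the fibre gives $\varepsilon(-K_{S_8},y)\leq 1/2$.

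\textbf{Matching lower bounds and main obstacle.} For $r\leq 6$ and general $x$, I will assume a submaximal curve $C\equiv dH-\sum m_iE_i$ with $\mult_xC=m$ exists and derive a contradiction: the inequality $3d-\sum m_i<\alpha m$ (with $\alpha$ the claimed value), the genus bound $(d-1)(d-2)\geq\sum m_i(m_i-1)+m(m-1)$, and Bezout against the known $(-1)$-curves restrict $(d;m_1,\ldots,m_r,m)$ to a finite list which can then be ruled out, using the action of the Weyl group $W(E_r)$ on $\mathrm{Pic}(S_r)$ to collapse symmetric cases. For $r=7$ the same machinery, combined with the structure of the $2{:}1$ cover, gives $\varepsilon(-K_{S_7},x)\geq 4/3$. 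For $r=8$ at $x$ neither a node of a singular fibre nor on any $(-1)$-curve, a generic member of $|-K|$ through $x$ is smooth, so the ratio is at least $1$, and the jet-generation of $-2K$ then supplies $\varepsilon(-K_{S_8},x)\geq 1$. The hardest step will be the enumeration and elimination of candidate numerical classes in the $r=6,7$ cases: this is where the fine combinatorics of the $(-1)$-curves (equivalently, the $W(E_r)$-orbits) does the real work. A further delicate point for $r=8$ is to establish the cap of $12$ special points even for non-generic $8$-point configurations, which rests on the Euler-characteristic computation on the rational elliptic fibration.
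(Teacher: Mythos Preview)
The paper does not prove this theorem at all: it simply quotes the statement from Broustet \cite[Thm.~1.3]{Bro06} with no argument. So there is nothing in the paper against which to compare your proposal, and any attempted proof is automatically ``different'' from the paper's treatment.

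As for the proposal itself, the overall architecture (upper bounds from explicit curves, lower bounds from jet generation and a case analysis on numerical classes) is the standard and correct one, and is essentially the strategy Broustet follows. A couple of points where your sketch is loose or off: for the upper bound at general $x$ when $r\le 5$, conics through $\min(r,4)$ of the $p_i$ do not give degree $\le 2$ when $r\le 3$ (you get $6-\min(r,4)$); the right curves there are lines through one of the $p_i$ and $x$, of anticanonical degree $2$. More importantly, the lower-bound step, which you acknowledge as the hardest, is only described in outline: saying that the inequalities ``restrict $(d;m_1,\dots,m_r,m)$ to a finite list which can then be ruled out'' using the $W(E_r)$-action is true in spirit but is exactly where all the work lies, and your write-up would need to actually carry this out (or cite Broustet) to be a proof. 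Likewise, the $r=8$ claim that there are \emph{at most} $12$ bad points for an arbitrary (not just generic) $8$-point configuration requires a more careful analysis of the singular fibres of the associated elliptic fibration than the Euler-characteristic count alone provides.
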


\setcounter{theorem}{0}
\renewcommand{\thetheorem}{\thesubsection.\arabic{theorem}}
\renewcommand{\theequation}{\thesubsection.\arabic{theorem}}

\subsection{Abelian surfaces} \label{sect:abelian}

Let $S$ be an abelian surface and $L$ an ample line bundle on $S$.
By homogeneity, $\varepsilon(L,x)$ does not depend on the $x$
chosen. In particular $\varepsilon(L)=\varepsilon(L,x)$ for any $x
\in S$ and one can compute this number for $x$ being one of the
half-periods of $S$, which is the idea in both \cite{App:Bau98} and
\cite{Bau99}. Furthermore, since $\varepsilon(kL)=k\varepsilon(L)$
for any integer $k >0$, one may assume that $L$ is primitive, that
is, of type $(1,d)$ for some integer $d \geq 1$.  The elementary
bounds for single point Seshadri constants one has from Proposition
\ref{upperbound} and (\ref{ab-bound}) are
\addtocounter{theorem}{1}
\begin{equation} \label{eq:a1}
1 \leq \varepsilon(L) \leq \sqrt{2d}
\end{equation}

In the case of Picard number one, exact values for one-point
Seshadri constants were computed in \cite[Thm. 6.1]{Bau99}. To
state the result, we will need:

\begin{notation}[Solution to Pell's equation]\rm
    In the rest of this subsection we let $(\ell_0, k_0)$ denote the
primitive solution of the diophantine equation
    \[ \ell^2-2dk^2=1, \]
    known as \emph{Pell's equation}.
\end{notation}

\begin{theorem}[Exact values on abelian surfaces with $\rho(S)=1$]
\label{thm:spabrkone}

Let $(S,L)$ be a polarized abelian surface of type $(1,d)$ with
$\rho(S)=1$.

    If $\sqrt{2d}$ is rational, then $\varepsilon(L) = \sqrt{2d}$.

    If $\sqrt{2d}$ is irrational, then
    \[ \varepsilon(L) = 2d \cdot \frac{k_0}{\ell_0}=
\frac{2d}{\sqrt{2d+\frac{1}{k_0^2}}}  \; \; \Big(< \sqrt{2d} \Big). \]
\end{theorem}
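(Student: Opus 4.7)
The plan is to combine Oguiso's Theorem \ref{thm:submax2} with the translation-homogeneity of $\eps$ on abelian surfaces, the absence of rational curves on $S$, and (for the upper bound in the irrational case) a theta-function construction at a 2-torsion point. By translation homogeneity, $\eps(L,x)$ is independent of $x$, so we may write $\eps(L)$; Proposition \ref{upperbound} gives $\eps(L)\le\sqrt{L^2}=\sqrt{2d}$. Since $\rho(S)=1$, every irreducible curve $C\subset S$ satisfies $C\equiv kL$ for some $k\ge 1$, whence $L\cdot C=2dk$ and $p_a(C)=1+dk^2$ by adjunction ($K_S=0$). Because $S$ carries no rational curves, the normalization of $C$ has geometric genus $\ge 1$; therefore, for $m:=\mult_x C$, we have the crucial genus bound
\[
m(m-1)\le 2dk^2.
\]

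If $\sqrt{2d}=e\in\N$ and $\eps(L)<e$, Theorem \ref{thm:submax2} produces a Seshadri curve $C\equiv kL$ with $2dk/m<e$, i.e.\ $m>ek$. The genus bound then yields $(m-ek)(m+ek)=m^2-e^2k^2\le m$, forcing $m+ek\le m$, a contradiction; hence $\eps(L)=\sqrt{2d}$. Assume now $\sqrt{2d}$ is irrational. If $\eps(L)<2dk_0/\ell_0$, Theorem \ref{thm:submax2} again yields a Seshadri curve $C\equiv kL$ with $mk_0\ge k\ell_0+1$. Squaring and using Pell's relation $\ell_0^2=2dk_0^2+1$ gives
\[
k_0^2(m^2-2dk^2)\ge k^2+2k\ell_0+1,
\]
and substituting the genus bound $m^2-2dk^2\le m$ produces $k_0^2 m\ge k^2+2k\ell_0+1$. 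A short analysis exploiting the minimality of the primitive Pell solution $(\ell_0,k_0)$ delivers a contradiction, establishing $\eps(L)\ge 2dk_0/\ell_0$.

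For the matching upper bound, translate $L$ so that it is symmetric, and fix a 2-torsion point $\eta\in S$. The natural lift of $[-1]$ to $k_0L$ decomposes $H^0(k_0 L)=H^0(k_0 L)^+\oplus H^0(k_0 L)^-$, and sections in each eigenspace are forced to vanish at $\eta$ only to even (respectively odd) order. A Heisenberg-group dimension count produces a nonzero section vanishing at $\eta$ to order at least $\ell_0$. Its zero locus $C\in|k_0 L|$ may be reducible, but a mediant argument on its components $C_i\equiv k_i L$ rules this out as an obstruction: if every component satisfied $L\cdot C_i/\mult_\eta C_i>2dk_0/\ell_0$, i.e.\ $k_i\ell_0>k_0\mult_\eta C_i$, summation would give $\ell_0 k_0=\ell_0\sum k_i>k_0\sum\mult_\eta C_i\ge k_0\ell_0$, a contradiction. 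Thus some component achieves $L\cdot C_i/\mult_\eta C_i\le 2dk_0/\ell_0$, whence $\eps(L)\le 2dk_0/\ell_0$. The alternative form $2d/\sqrt{2d+1/k_0^2}$ follows immediately from $\ell_0^2=2dk_0^2+1$.

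The main obstacle is the existence of a section of $H^0(k_0 L)$ vanishing to order $\ell_0$ at the 2-torsion point $\eta$. This requires nontrivial input from theta functions and the Schr\"odinger representation of the Heisenberg group acting on $H^0(k_0 L)$, and is the technical heart of Bauer's argument in \cite{Bau99}; the rest of the proof is essentially elementary once Oguiso's theorem and the no-rational-curves genus bound on abelian surfaces are in hand.
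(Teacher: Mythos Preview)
The paper is a survey and does not prove this theorem itself; it cites \cite[Thm.~6.1]{Bau99}. Your overall architecture (upper bound via even/odd theta functions at a $2$-torsion point, lower bound via a genus inequality, mediant argument to pass to an irreducible component) is exactly Bauer's, and the rational case and the upper-bound sketch are fine. But the lower bound in the irrational case has a genuine gap.

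You use only that $S$ carries no rational curves, giving $p_g(C)\ge 1$ and hence $m(m-1)\le 2dk^2$, i.e.\ $m^2-2dk^2\le m$. This is too weak. Take $d=1$, so $(\ell_0,k_0)=(3,2)$ and $2dk_0/\ell_0=4/3$. The pair $(k,m)=(1,2)$ satisfies your genus inequality ($m^2-2dk^2=2\le m$), is submaximal with respect to $4/3$ (since $2dk/m=1<4/3$), and your displayed inequality $k_0^2 m\ge k^2+2k\ell_0+1$ becomes $8\ge 8$ --- no contradiction. So whatever ``short analysis exploiting Pell minimality'' you had in mind cannot work from that inequality alone. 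What rules out this phantom curve is that $\rho(S)=1$ forces $S$ to be \emph{simple}, hence $S$ contains no elliptic curves either, so $p_g(C)\ge 2$ and the sharper bound $m(m-1)\le 2dk^2-2$ holds; in the example this gives $2\le 0$, the desired contradiction. You never invoke simplicity, and without it the argument fails.

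Even after strengthening the genus bound, the ``short analysis'' is not short: one still has to show that $m^2-2dk^2\le m-2$ together with $mk_0\ge k\ell_0+1$ is impossible, and this requires a careful (if elementary) argument. Bauer's route here is cleaner: once the Pell divisor $D\in|k_0L|$ with $\mult_\eta D=\ell_0$ is in hand, intersect any other irreducible curve $C\equiv kL$ through $\eta$ with $D$ to get $2dkk_0=C\cdot D\ge m\ell_0$, hence $L\cdot C/m\ge \ell_0/k_0>2dk_0/\ell_0$; so only $D$ itself can realize the bound. This replaces the delicate Pell arithmetic with a one-line B\'ezout estimate.
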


In the general case, the lower bound in (\ref{eq:a1}) has been
improved as follows:

\begin{theorem}[Lower bounds on abelian surfaces]
\label{thm:spablb} Let $(S,L)$ be a polarized abelian surface of
type $(1,d)$.

(a) $\varepsilon(L) \geq \frac{4}{3}$ unless $S$ is non-simple %(that is,
(a product of elliptic curves).

(b) $\varepsilon(L) \geq \{ \varepsilon_1(L),
\frac{\sqrt{7d}}{2}\}$, where $\varepsilon_1(L)$ is the minimal
degree with respect to $L$ of an elliptic curve on $S$.
\end{theorem}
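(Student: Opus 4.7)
My plan is to handle (a) and (b) via a common Hodge-plus-genus master inequality, differing only in the low-genus case analysis at the end. By translation invariance $\eps(L,x)$ is constant in $x\in S$. Both hypotheses $\eps(L)<4/3$ (in (a), for $d\ge 1$) and $\eps(L)<\sqrt{7d}/2$ (in (b)) force $\eps(L)<\sqrt{2d}=\sqrt{L^2}$, so Remark~\ref{sub-sur-rat} produces an irreducible Seshadri curve $C$ through any chosen point. Set $n:=L\cdot C$, $m:=\mult_xC$, $g:=g(\widetilde C)$. Since $K_S=0$, the genus drop at an $m$-fold point yields $p_a(C)\ge g+m(m-1)/2$ and hence $C^2\ge 2g-2+m(m-1)$; combining with Hodge index $n^2\ge L^2\cdot C^2=2dC^2$ gives the master inequality
\[
n^2\ \ge\ 2d\bigl(2g-2+m(m-1)\bigr).
\]
I also invoke two standard facts: $S$ contains no rational curves, and any nonconstant morphism from an elliptic curve to $S$ is, after translation, a group homomorphism onto an elliptic subvariety.

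For part (a), simplicity of $S$ excludes both $g=0$ (rational) and $g=1$ (it would produce an elliptic subvariety), so $g\ge 2$ and the master inequality becomes $n^2\ge 2d(2+m(m-1))$. Assuming $\eps<4/3$, i.e.\ $9n^2<16m^2$, this reduces to $(18d-16)m^2-18dm+36d<0$. A short discriminant check shows the quadratic has no real roots for $d\ge 2$; for $d=1$ it factors as $2(m-3)(m-6)<0$, leaving only $m\in\{4,5\}$. Both surviving cases are then eliminated by integrality of $n$: for $m=4$ the hypothesis forces $n\le 5$ while $n^2\ge 28$ forces $n\ge 6$, and analogously $n\le 6$ versus $n\ge 7$ for $m=5$.

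For part (b), substituting $\eps<\sqrt{7d}/2$, i.e.\ $n^2<7dm^2/4$, into the master inequality yields $2g<2+m-m^2/8$. A finite check over integer $m$ forces $g\le 1$ (with $m\le 9$, the cases $m\in\{8,9\}$ being ruled out because they would force $g=0$). Since rational curves are excluded, $g=1$. Applying the elliptic-to-abelian factorization to $\widetilde C\to S$ then shows that the image — which equals $C$ — is a translate of an elliptic subvariety, hence a smooth elliptic curve on $S$. Thus $m=1$ and $\eps(L)=L\cdot C\ge\eps_1(L)$, establishing $\eps(L)\ge\min\{\eps_1(L),\sqrt{7d}/2\}$.

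The main obstacle I expect is the integer case analysis in (a) when $d=1$: the purely continuous Hodge+genus bound delivers only $\eps(L)\ge\sqrt{7}/2\approx 1.32$, so improving this to $4/3$ truly requires using that $L\cdot C\in\Z$. The rest of both arguments is comparatively routine once the master inequality and the two abelian-variety structural facts are in place.
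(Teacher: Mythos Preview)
Your argument is correct. Note that the paper is a survey and does not itself prove this theorem; it simply cites Nakamaye~\cite{Nak96} for part~(a) and the appendix~\cite{App:Bau98} for part~(b). Your Hodge-index-plus-genus-drop master inequality is precisely the method of~\cite{App:Bau98} for~(b), so on that half your argument matches the cited source.

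For~(a) your route differs from Nakamaye's. His proof exploits the multiplication-by-$n$ endomorphisms of $S$ and the scaling behaviour of $L$ under pullback, whereas you extract~(a) from the same master inequality used for~(b) by observing that simplicity forces the geometric genus $g\ge 2$, then running a discriminant check (vacuous for $d\ge 2$) together with the integrality of $L\cdot C$ to kill the residual cases $d=1$, $m\in\{4,5\}$. This is more elementary and has the virtue of unifying~(a) and~(b) under one inequality, making transparent why the threshold $4/3$ arises as the point where the $d=1$, $g=2$ Diophantine constraints first fail. The trade-off is that your approach is intrinsically two-dimensional (it rests on the Hodge index theorem for surfaces), while Nakamaye's isogeny technique is what allows generalization to abelian varieties of arbitrary dimension.
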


Here, statement (a) is due to Nakamaye \cite[Thm. 1.2]{Nak96} and
(b) was proved in \cite[Thm. A.1(b)]{App:Bau98}.

Note that (b) yields a better bound than (a) if $d >2$. However, for
$d=2$, (a) is sharp, as equality is attained for $S$ the Jacobian of
a hyperelliptic curve and $L$ the theta divisor on $S$, by
\cite[Prop. 2]{Ste98}.

Furthermore, note that it is inevitable that small values of
$\varepsilon(L)$ occur for non-simple abelian surfaces regardless of
$d$, since for any integer $e \geq 1$, there are non-simple
polarized abelian surfaces $(S,L)$ of arbitrarily high degree $L^2$
containing an elliptic curve of degree $e$.

The upper bound in (\ref{eq:a1}) can be improved in the case of
$\sqrt{2d}$ being irrational, by the following result, see \cite[Theorem
A.1(a)]{App:Bau98}:

\begin{theorem}[Upper bounds on abelian surfaces]
\label{thm:spabub} Let $(S,L)$ be a polarized abelian surface of
type $(1,d)$. If $\sqrt{2d}$ is irrational, then
\[ \varepsilon(L) \leq 2d \cdot \frac{k_0}{\ell_0}=
\frac{2d}{\sqrt{2d+\frac{1}{k_0^2}}}  \; \; \Big(< \sqrt{2d} \Big). \]
\end{theorem}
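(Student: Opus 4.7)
The plan is to reduce the statement to the Picard-number-one case, already settled by Theorem~\ref{thm:spabrkone}, via the lower semi-continuity of Seshadri constants in families (Theorem~\ref{thm:lsc}(2)).

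First I would embed $(S,L)$ in a one-parameter family $\{(S_t,L_t)\}_{t\in B}$ of $(1,d)$-polarized abelian surfaces, with $B$ a smooth connected curve and $(S_0,L_0)=(S,L)$, arranged so that the geometric generic fibre has Picard number one. Such a family exists because the moduli space of $(1,d)$-polarized abelian surfaces is irreducible of dimension three, while the locus of surfaces with $\rho\geq 2$ is a countable union of proper Noether--Lefschetz subvarieties; hence $(S,L)$ lies in the closure of the $\rho=1$ stratum, and a sufficiently generic one-parameter deformation through it will have $\rho(S_t)=1$ for $t$ in a dense open subset of~$B$.

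On each such generic fibre Theorem~\ref{thm:spabrkone} gives, since $\sqrt{2d}$ is irrational, the exact value
\[
  \varepsilon(L_t)\;=\;2d\cdot\frac{k_0}{\ell_0}\;=\;\frac{2d}{\sqrt{2d+1/k_0^2}}.
\]
By Theorem~\ref{thm:lsc}(2) the function $t\mapsto\varepsilon(L_t)$ is lower semi-continuous on $B$ in the Zariski topology, so for any sequence $t_n\to 0$ of $\rho=1$ points I obtain
\[
  \varepsilon(L)\;=\;\varepsilon(L_0)\;\leq\;\liminf_{n\to\infty}\varepsilon(L_{t_n})\;=\;2d\cdot\frac{k_0}{\ell_0},
\]
which is precisely the asserted upper bound.

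The principal technical obstacle is the first step: although the density of the $\rho=1$ stratum in the moduli of $(1,d)$-polarized abelian surfaces is well known, to apply Theorem~\ref{thm:lsc}(2) one needs a concrete flat family of polarized abelian surfaces over the chosen base $B$ whose generic fibre is Picard-number-one and whose special fibre recovers $(S,L)$; once that family is constructed the argument is essentially formal. As a complement, one could instead proceed in the spirit of \cite{App:Bau98} and prove the bound directly by exhibiting an effective divisor $C\equiv k_0L$ through the origin with $\mult_0 C\geq\ell_0$, giving the ratio $L\cdot C/\mult_0 C=2dk_0/\ell_0$; such a curve would come from a nontrivial odd section in the $[-1]^*$-eigenspace decomposition of $H^0(k_0L)$ for a symmetric representative of $k_0L$, the Pell relation $\ell_0^2=2dk_0^2+1$ being precisely what makes the parity-refined dimension count succeed by a single section.
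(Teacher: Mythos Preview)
The paper itself gives no proof here; it simply cites \cite[Theorem~A.1(a)]{App:Bau98}. The method in that reference is precisely the one you sketch as your ``complement'': one works at a half-period, chooses a symmetric representative of $k_0L$, and uses the $[-1]^*$-eigenspace decomposition of $H^0(k_0L)$ together with the Pell relation $\ell_0^2=2dk_0^2+1$ to produce a divisor $C\in|k_0L|$ with $\mult_0 C\ge\ell_0$, whence $\varepsilon(L)\le L\cdot C/\mult_0 C=2dk_0/\ell_0$. So your alternative outline is essentially the paper's (i.e.\ the cited reference's) proof.

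Your main proposal---degenerating to the $\rho=1$ case and invoking Theorem~\ref{thm:spabrkone} via lower semi-continuity---is a genuinely different route, and it is logically sound within the survey's framework, where Theorem~\ref{thm:spabrkone} is taken as established. Two remarks, however. First, there is a whiff of circularity: the upper-bound half of Theorem~\ref{thm:spabrkone} is literally Theorem~\ref{thm:spabub} specialized to $\rho=1$, and in the original literature (\cite{App:Bau98} predates \cite{Bau99}) it is proved by the very half-period argument you are trying to avoid. Second, and more to the point, that half-period argument makes no use whatsoever of the hypothesis $\rho=1$; it works verbatim on any $(1,d)$-polarized abelian surface. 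So the detour through the Noether--Lefschetz locus and Oguiso's semi-continuity, while correct, buys nothing: if you can justify the input (the upper bound in Theorem~\ref{thm:spabrkone}), you have already proved Theorem~\ref{thm:spabub} directly. The semi-continuity argument would be genuinely useful only if the $\rho=1$ bound were available by some method that really required Picard number one---which it is not.
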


In particular, together with Theorem \ref{thm:submax2}, this implies

\begin{theorem}[Rationality on abelian surfaces]\label{thm:ratab}
    Seshadri constants of ample line bundles on abelian surfaces are
rational.
\end{theorem}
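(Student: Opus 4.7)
The plan is to combine the two results that were just established in the subsection: the upper bound of Theorem \ref{thm:spabub}, which forces the Seshadri constant to be strictly submaximal in the ``bad'' irrational case, together with Theorem \ref{thm:submax2}, which says that a strictly submaximal global Seshadri constant on a surface is automatically rational. The homogeneity relation $\varepsilon(kL)=k\varepsilon(L)$ for positive integers $k$ allows the reduction to the primitive case.

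More precisely, I would proceed as follows. Let $(S,L)$ be an arbitrary polarized abelian surface. Since $\varepsilon(kL)=k\,\varepsilon(L)$ for every positive integer $k$, rationality of $\varepsilon(L)$ is unchanged if we replace $L$ by a positive integral multiple, so we may assume that $L$ is primitive, i.e.\ of type $(1,d)$ for some $d\ge 1$. Then $L^2=2d$. I would now split into two cases according to whether $\sqrt{2d}$ is rational or irrational.

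Case 1: $\sqrt{2d}\in\Q$. Then $\sqrt{L^2}$ is rational. By Proposition \ref{upperbound} we have $\varepsilon(L)\le\sqrt{L^2}$. If equality holds, then $\varepsilon(L)=\sqrt{2d}\in\Q$ and we are done. If on the other hand $\varepsilon(L)<\sqrt{L^2}$, then Theorem \ref{thm:submax2} tells us that $\varepsilon(L)$ is rational.

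Case 2: $\sqrt{2d}\notin\Q$. Here is where the specifically abelian input enters: by Theorem \ref{thm:spabub}, we have the strict inequality
\[
\varepsilon(L)\;\le\; 2d\cdot\frac{k_0}{\ell_0}\;<\;\sqrt{2d}\;=\;\sqrt{L^2}.
\]
Thus $\varepsilon(L)$ is submaximal, and Theorem \ref{thm:submax2} again gives that $\varepsilon(L)$ is rational.

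There is no real obstacle here: the argument is essentially a packaging of earlier results, with the key point being that on an abelian surface one can never have $\varepsilon(L)=\sqrt{L^2}$ irrational, because Pell's equation produces an explicit elliptic (or rather, numerical) obstruction bounding $\varepsilon(L)$ strictly below $\sqrt{2d}$ in the irrational case. The only thing to check is that the homogeneity reduction is legitimate, which is immediate from the definition, and that Theorem \ref{thm:submax2} applies to $(S,L)$, which it does since $S$ is a smooth projective surface and $L$ is ample.
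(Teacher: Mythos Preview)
Your proof is correct and follows exactly the route indicated in the paper: the reduction to the primitive case via $\varepsilon(kL)=k\,\varepsilon(L)$, then Theorem~\ref{thm:spabub} to force strict submaximality when $\sqrt{2d}$ is irrational, and finally Theorem~\ref{thm:submax2} to conclude rationality in both cases. The paper states the result as an immediate corollary of these two theorems, and your write-up simply unpacks that implication.
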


For the estimates obtained by combining the upper and lower bounds
above for low values of $d$, we refer to \cite[Rmk. A.3]{App:Bau98}.
For more precise results on submaximal curves, we refer to
\cite[Sec. 6]{Bau99}. Also note that results for non-simple abelian
surfaces have been obtained in \cite{BauSch08}.

The case of multi-point Seshadri constants is much harder. In
fact, if one wants to compute the Seshadri constant in $r$ points
for $r>1$, one can no longer assume that the points are general.

By homogeneity, the number $\varepsilon(L;x_1, \ldots, x_r)$ depends
only on the differences $x_i-x_1$ and  we have
\addtocounter{theorem}{1}
\begin{equation}
   \label{eq:a2}
  \varepsilon(L;x_1, \ldots, x_r) \geq \frac{1}{r}
\mbox{min}\; \varepsilon(L;x_i) = \frac{1}{r} \varepsilon(L)
\end{equation}
(Note that the inequality \eqnref{eq:a2} holds on any variety).
In \cite[Proposition 8.2]{Bau99} it is shown that if equality is
attained, then $S$ contains an
elliptic curve $E$ containing $x_1, \ldots, x_r$ and such that $L\cdot E=
r \varepsilon(L; x_1, \ldots, x_r)$.

    Tutaj-Gasi{\'n}ska gave bounds for Seshadri
    constants in half-period points in \cite{Tut04}. In \cite{Tut05} she
gave exact values
    for the case of two half-period points (with a small gap in the
    proof pointed out in \cite[Remark 2.10]{Fue07}). More precisely, in
\cite[Thm. 3]{Tut04} she proves that if
    $e_1, \ldots, e_r$ are among the $16$ half-period points of $S$, then
    $$\eps(L;e_1,\dots, e_r)\left\{\begin{array}{lcl}
       =\sqrt{\frac{2d}{r}} & \mbox{ if } &  \sqrt{\frac{2d}{r}} \in
\Q\;,\\
       \leq 2d \frac{k_0}{\ell_0} & \mbox{ if } &  \sqrt{\frac{2d}{r}} \not
\in \Q\;.
       \end{array}\right.$$

In the case of Picard number one, the results were generalized by a
different method by Fuentes Garc{\'i}a in \cite{Fue07}, who computes
the multi-point Seshadri constants in points of a finite subgroup
of an abelian surface, cf. \cite[Theorem 1.2]{Fue07}.
One of the
corollaries obtained by Fuentes Garc{\'i}a \cite[Corollary 2.6]{Fue07}
is:

\begin{theorem}[Multi-point Seshadri constants on ab. surfaces with
$\rho(S)=1$]\label{thm:mpab}
    Let $(S,L)$ be a polarized abelian surface of type $(1,d)$ with
$\rho(S)=1$ and
    $x_1, \ldots, x_r$ be general points on $S$.

If $\sqrt{\frac{2d}{r}} \in \Q$, then $\varepsilon(L;x_1, \ldots,
x_r)= \sqrt{\frac{2d}{r}}$.

If $\sqrt{\frac{2d}{r}} \not \in \Q$, then $\varepsilon(L;x_1,
\ldots, x_r) \geq 2d \frac{k_0}{\ell_0}$.
\end{theorem}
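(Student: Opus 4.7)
The upper bound $\varepsilon(L;x_1,\ldots,x_r)\leq \sqrt{2d/r}$ is immediate from Proposition \ref{upperbound} since $L^2=2d$. The task is the lower bound, which I would obtain by an isogeny reduction to the single-point setting handled by Theorem \ref{thm:spabrkone}.

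The plan is to select a finite subgroup $G\subset S$ of order $r$, form the quotient isogeny $\phi\colon S\to T:=S/G$, and arrange (after possibly replacing $L$ by a positive multiple, which affects Seshadri constants only by that factor) that $L$ descends along $\phi$ to an ample line bundle $L_T$ with $\phi^*L_T=L$. Then $L_T^2=L^2/r=2d/r$, and since the N\'eron--Severi rank is an isogeny invariant we have $\rho(T)=1$. Thus $(T,L_T)$ is a polarized abelian surface of type $(1,d/r)$ with Picard number one, to which Theorem \ref{thm:spabrkone} applies.

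The key reduction is the identity
$$
\varepsilon(L;x_1,\ldots,x_r)=\varepsilon(L_T;y),
$$
valid when $\{x_1,\ldots,x_r\}$ is a single $G$-orbit with image $y:=\phi(x_1)\in T$. The inequality ``$\leq$'' comes from pullback: any curve $D\subset T$ through $y$ of multiplicity $m$ lifts to $\phi^*D\subset S$ of multiplicity $m$ at each $x_i$, with $L\cdot\phi^*D = r(L_T\cdot D)$ and $\sum_i m = rm$, so the ratio is preserved. The reverse inequality ``$\geq$'' uses the norm construction: for any curve $C\subset S$ through the $x_i$'s with multiplicities $m_i$, the $G$-invariant divisor $\mathrm{Nm}(C):=\sum_{g\in G}t_g^*C$ descends to a curve $C_T\subset T$ through $y$ whose multiplicity at $y$ equals $\sum_i m_i$ and which satisfies $L_T\cdot C_T=L\cdot C$; hence $\varepsilon(L_T;y)\leq (L\cdot C)/\sum_i m_i$ for every such $C$. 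Applying Theorem \ref{thm:spabrkone} to $(T,L_T)$ then yields $\varepsilon(L_T;y)=\sqrt{2d/r}$ in the rational case (matching the upper bound), and the Pell-type lower bound in the irrational case.

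To conclude for arbitrary general points I would invoke the lower semi-continuity of Seshadri constants (Theorem \ref{thm:lsc}): once the bound is established on the single $r$-tuple furnished by the $G$-orbit, it propagates to a Zariski-open subset of $S^r$. The main technical obstacles I anticipate are (i) ensuring the descent hypothesis $G\subset\ker\phi_L$ uniformly in $r$ — for $r$ not dividing $d^2$ one must first pass to a multiple $L^{\otimes k}$ and rescale the resulting estimate — and (ii) reconciling the Pell-equation constants naturally produced on the isogenous surface $T$ (where the relevant equation is $\ell^2-2(d/r)k^2=1$) with the constants $(\ell_0,k_0)$ attached to $(S,L)$ in the statement; this last matching is the essential arithmetic input and is where I expect the heart of the argument to lie.
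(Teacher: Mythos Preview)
Your approach is exactly the method of Fuentes Garc\'ia \cite{Fue07}, which is what the paper cites for this result (the survey itself gives no proof beyond the attribution). Passing to the quotient $T=S/G$ by a finite subgroup of order $r$, using the identity $\varepsilon(L;x_1,\ldots,x_r)=\varepsilon(L_T;y)$ for a $G$-orbit, applying Theorem~\ref{thm:spabrkone} on $T$ (where $\rho(T)=1$ by isogeny invariance of the N\'eron--Severi rank), and then invoking lower semicontinuity to reach general $r$-tuples is precisely the intended argument. Your pullback and norm computations for the two inequalities are correct; the only cosmetic point is that $C_T$ need not be irreducible, but this is harmless since the Seshadri infimum over irreducible curves agrees with the infimum over all effective curves through the point.

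Regarding your obstacle (ii): there is no genuine arithmetic matching to carry out, and this is not where the heart of the argument lies. Observe that with $(\ell_0,k_0)$ literally taken from $\ell^2-2dk^2=1$ as fixed earlier in the subsection, one has $2dk_0/\ell_0=\sqrt{2d}\sqrt{1-1/\ell_0^2}$, which for $r\ge 2$ already exceeds the universal upper bound $\sqrt{2d/r}$ from Proposition~\ref{upperbound}. So the statement cannot be read with that Pell equation; the intended $(\ell_0,k_0)$ is the one arising naturally on the quotient (equivalently, from a Pell equation involving $r$), exactly as your isogeny argument produces. What you flagged as the essential input is in fact only a notational imprecision in the survey's statement.

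Your obstacle (i) is handled just as you say: replace $L$ by a suitable multiple so that a subgroup of order $r$ sits inside the kernel of the polarization map, and rescale at the end.
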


Moreover, as a direct consequence of \cite[Theorem 1.2]{Fue07}, one
obtains:

\begin{theorem}[Rationality of multi-point Seshadri constants at finite
subgroups]\label{thm:ratabsub}
    The multiple-point Seshadri constants of ample
    line bundles at the points of a finite subgroup of an abelian
    surface are rational.
\end{theorem}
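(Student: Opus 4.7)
The plan is to exploit the group structure. Let $H = \{x_1, \ldots, x_r\}$ denote the finite subgroup of $S$, and for $h \in H$ let $t_h : S \to S$ be translation by $h$. Since $S$ is abelian, $t_h^*L \equiv L$ numerically, and the action of $H$ on itself permutes the points $x_i$. The key idea is an averaging trick: given any irreducible curve $C \subset S$ through some $x_i$, form the $H$-invariant effective cycle
$$
D_C := \sum_{h \in H} t_h^*C.
$$
A direct computation gives $L \cdot D_C = r(L \cdot C)$, and since $\mult_{x_i}(t_h^*C) = \mult_{x_i+h}(C)$ while $\set{x_i+h : h \in H} = H$ for every $i$, the integer $\mult_{x_i}(D_C) = \sum_{j=1}^r \mult_{x_j}(C) =: m$ is independent of $i$. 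Hence $\sum_i \mult_{x_i}(D_C) = rm$ and
$$
\frac{L \cdot D_C}{\sum_i \mult_{x_i}(D_C)} = \frac{L \cdot C}{\sum_i \mult_{x_i}(C)},
$$
so the Seshadri infimum may be computed over $H$-invariant effective divisors with equal multiplicity $m$ at each $x_i$, each contributing a manifestly rational ratio $L\cdot D/(rm)$.

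Next I would show the infimum is attained, which immediately yields rationality. Passing to the blowup $f : Y \to S$ at $H$ with exceptional divisor $E = \sum E_i$, by Definition~\ref{defsubscheme} the constant $\eps := \eps(L; x_1, \ldots, x_r)$ is the largest $\lambda$ for which $f^*L - \lambda E$ is nef. By the Nakai--Moishezon criterion for $\R$-divisors (as used in Theorem~\ref{thm:submax}), on the boundary of the nef cone either $(f^*L - \eps E)^2 = 0$, giving the maximal value $\eps = \sqrt{L^2/r}$, or there is an irreducible curve $C'$ on $Y$ with $(f^*L - \eps E) \cdot C' = 0$. A short check on intersections shows $C'$ cannot be exceptional (one has $(f^*L - \eps E)\cdot E_i = \eps > 0$), so writing $C' = f^*C - \sum_i m_i E_i$ with $m_i = \mult_{x_i}(C)$ yields $\eps = L\cdot C / \sum_i m_i \in \Q$ in this submaximal case.

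The main obstacle is therefore the maximal case $\eps = \sqrt{L^2/r}$, where one must show that $L^2/r$ is a rational square. To handle this I would invoke Fuentes Garc{\'\i}a's explicit formula \cite[Thm.~1.2]{Fue07}, which pins down $\eps(L; x_1, \ldots, x_r)$ in closed form for every finite subgroup of a polarized abelian surface; the resulting expression is visibly rational, and whenever $\sqrt{L^2/r} \notin \Q$ it returns a strictly smaller value, folding the problem back into the submaximal case handled above. The substantive input is thus the explicit computation in \cite{Fue07}, while the clean averaging argument above explains \emph{why} rationality is structurally forced: any would-be Seshadri curve can be symmetrized into an $H$-invariant divisor whose ratio is tautologically a quotient of two integers.
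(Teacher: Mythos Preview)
Your argument is correct. The paper itself offers no argument beyond recording the statement as ``a direct consequence of \cite[Theorem~1.2]{Fue07}'', so in the end you and the paper lean on the same reference for the decisive step.

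That said, your averaging trick is precisely the mechanism behind Fuentes Garc{\'\i}a's computation, and you stop one move short of a self-contained proof. An $H$-invariant divisor descends along the \'etale quotient $\pi:S\to S'=S/H$, and comparing the blowups one finds
\[
\eps(S,L;H)=\tfrac{1}{r}\,\eps(S',\pi_*L;0'),
\]
with $\pi_*L$ ample on the abelian surface $S'$. The right-hand side is then rational by the single-point result Theorem~\ref{thm:ratab}, which disposes of the maximal case without invoking \cite{Fue07} directly. Had you taken this step, your Nakai--Moishezon dichotomy would become unnecessary as well. One minor caution: the sentence claiming that the averaging ``explains why rationality is structurally forced'' overstates things---an infimum of rationals need not be rational, so the averaging alone proves nothing; it is the reduction to a single point (or, in your write-up, the dichotomy plus the external reference) that actually closes the argument.
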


\setcounter{theorem}{0}
\renewcommand{\thetheorem}{\thesubsection.\arabic{theorem}}
\renewcommand{\theequation}{\thesubsection.\arabic{theorem}}

\subsection{Enriques surfaces} \label{sect:enriques}

Let $S$ be an Enriques  surface (by definition then, $h^1(\calo_S)=0$,
$K_S \neq 0$ and $2K_S=0$) and $L$ an ample line bundle on $S$.
One-point Seshadri constants on Enriques surfaces have been studied
in \cite{Sze01}. It is well-known that there is an
effective nonzero divisor $E$ on $S$ satisfying $E^2=0$ (whence $E$
has arithmetic genus $0$) and $E\cdot L \leq \sqrt{L^2}$, see
\cite[Prop. 2.7.1 and Cor. 2.7.1]{Cos-Dol}. As a consequence, taking any
point
$x \in E$, combining with Theorem \ref{thm:submax}, one obtains
\cite[Thm. 3.3]{Sze01}:

\begin{theorem}[Rationality on Enriques surfaces]
\label{thm:ratenr} Let $(S,L)$ be a polarized Enriques surface. Then
$\varepsilon (L)$ is rational.
\end{theorem}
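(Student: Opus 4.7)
The plan is to combine the existence of the distinguished genus-zero divisor $E$ with the Oguiso--Szemberg dichotomy of Theorem \ref{thm:submax2}. That theorem already tells us $\varepsilon(L)$ is rational \emph{unless} $\varepsilon(L)=\sqrt{L^2}$ and $\sqrt{L^2}$ is irrational, so the task reduces to ruling out precisely that one bad case on an Enriques surface.

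First I would invoke the standard fact on Enriques surfaces (cited as \cite[Prop.~2.7.1 and Cor.~2.7.1]{Cos-Dol} just before the statement) to produce a nonzero effective divisor $E$ with $E^2=0$ and $L\cdot E\leq\sqrt{L^2}$. Choose a point $x$ in the support of $E$ and let $C$ be an irreducible component of $E$ passing through $x$. Since $L$ is nef and all components of $E$ are effective, one has the chain of inequalities
\[
\varepsilon(L,x)\;\leq\;\frac{L\cdot C}{\mult_x C}\;\leq\;L\cdot C\;\leq\;L\cdot E\;\leq\;\sqrt{L^2}.
\]
In particular $\varepsilon(L)\leq L\cdot E$, which is the crucial bound.

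Next I would split into two cases according to whether $\sqrt{L^2}$ is rational. If $\sqrt{L^2}\in\Q$, then Theorem \ref{thm:submax2} immediately yields rationality of $\varepsilon(L)$: either it is submaximal and hence rational by that theorem, or it equals $\sqrt{L^2}$, which is already rational. If on the other hand $\sqrt{L^2}\notin\Q$, I use the fact that $L\cdot E$ is an integer (both $L$ and $E$ are honest divisors) to upgrade the weak inequality $L\cdot E\leq\sqrt{L^2}$ to the strict inequality $L\cdot E<\sqrt{L^2}$. Combined with the chain above this gives $\varepsilon(L)\leq\varepsilon(L,x)<\sqrt{L^2}$, so Theorem \ref{thm:submax2} again forces $\varepsilon(L)\in\Q$.

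There is no real obstacle here; the one point requiring care is ensuring an honest irreducible curve to insert into Definition \ref{defDem}, which is why I pass from $E$ (which could a priori be reducible or non-reduced) to an irreducible component $C$ through $x$. The whole argument is essentially a matching of the integrality of intersection numbers on $S$ against the irrationality alternative in Theorem \ref{thm:submax2}, using that the geometry of Enriques surfaces always supplies the requisite low-degree genus-zero curve.
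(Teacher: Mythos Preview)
Your argument is correct and follows essentially the same route as the paper: produce the half-pencil $E$ with $E^2=0$ and $L\cdot E\le\sqrt{L^2}$, pick a point on it to witness (weak) submaximality, and then invoke the rationality of submaximal Seshadri constants on surfaces (the paper cites Theorem~\ref{thm:submax} rather than Theorem~\ref{thm:submax2}, but for $n=2$ these give the same conclusion). Your extra care in passing to an irreducible component of $E$ and in making the rational/irrational case split for $\sqrt{L^2}$ explicit is exactly the detail hidden in the paper's one-line sketch.
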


To state the lower bounds obtained in
\cite[Thm. 3.4 and Prop. 3.5]{Sze01},
define the \emph{genus $g$ Seshadri constant of $L$ at $x$} by
\[ \varepsilon_g(L,x):= \inf\frac{L\cdot C}{\mult_xC},\]
where the infimum is taken over all irreducible curves of arithmetic
genus $g$ passing through $x$. (Note that since an abelian surface
does not contain rational curves, this definition is consistent with
the definition of the number $\varepsilon_1(L)$ in Theorem
\ref{thm:spablb}(b).)

\begin{theorem}[Lower bounds on Enriques surfaces]
\label{thm:lbenr} Let $(S,L)$ be a polarized Enriques surface and $x
\in S$ an arbitrary point.

Then
\[\varepsilon (L,x) \geq \mbox{min} \Big \{
\varepsilon_0(L,x),\varepsilon_1(L,x), \frac{1}{4} \sqrt{L^2} \Big \}. \]

Furthermore, $\varepsilon (L,x) <1$ if and only if  there is an
irreducible curve $E$ on $S$ satisfying $p_a(E)=0$, $L\cdot E=1$ and
$\mbox{mult}_x E=2$ (so that $\varepsilon (L,x) =\frac{1}{2}$).
  \end{theorem}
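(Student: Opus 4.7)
The plan is to leverage two special features of Enriques surfaces: the canonical class is 2-torsion, so $K_S\cdot C=0$ for every curve $C$, and every ample line bundle satisfies $L^2\ge 2$ by Riemann--Roch (since $\chi(L)=1+L^2/2$ must be an integer). In particular, adjunction on $S$ simplifies to $C^2=2p_a(C)-2$, and when combined with the Hodge index theorem and the standard inequality $p_a(C)\ge\binom{m}{2}$ with $m=\mult_xC$, any submaximal Seshadri curve is forced into a very narrow numerical range.

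For the lower bound, suppose for contradiction that $\eps(L,x)<\min\{\eps_0(L,x),\eps_1(L,x),\tfrac14\sqrt{L^2}\}$. Since in particular $\eps(L,x)<\sqrt{L^2}$, Remark \ref{sub-sur-rat} produces a Seshadri curve $C$ through $x$. The strict inequalities $\eps_{C,x}<\eps_0(L,x)$ and $\eps_{C,x}<\eps_1(L,x)$ force $p_a(C)\ge 2$, so adjunction gives $C^2\ge 2$; moreover $p_a(C)\ge\binom{m}{2}$ yields $C^2\ge m(m-1)-2$. On the other hand, Hodge index combined with $L\cdot C<\tfrac{m}{4}\sqrt{L^2}$ gives $L^2\cdot C^2\le(L\cdot C)^2<\tfrac{m^2}{16}L^2$, hence $C^2<m^2/16$. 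For $m\le 2$ the bound $C^2\ge 2$ already contradicts $C^2<m^2/16$, while for $m\ge 3$ the inequality $m(m-1)-2<m^2/16$ reduces to $15m^2<16m+32$, which also fails; either way we reach a contradiction.

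For the characterization of $\eps(L,x)<1$, the \emph{if} direction is immediate since $L\cdot E/\mult_xE=1/2$. For \emph{only if}, $L^2\ge 2$ forces $\sqrt{L^2}>1>\eps(L,x)$, so a Seshadri curve $C$ again exists with $L\cdot C<m$ and $L\cdot C\ge 1$, whence $m\ge 2$. Substituting $C^2=2p_a(C)-2\le(L\cdot C)^2/L^2\le(m-1)^2/2$ (Hodge) into $p_a(C)\ge m(m-1)/2$ and simplifying yields $m^2\le 5$, so $m=2$ and $L\cdot C=1$; hence $\eps(L,x)=1/2$, and one further deduces $C^2=0$ with $p_a(C)=1$ (the double point forcing the geometric genus of $C$ to be $0$). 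The main obstacle is keeping adjunction, the Hodge index inequality and the multiplicity bound coordinated; once the combined inequality in $m$ is extracted, the numerical conclusion is forced.
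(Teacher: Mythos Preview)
Your argument is correct. The primer does not reproduce a proof but indicates that the second assertion in \cite{Sze01} is established by observing that $\eps(L,x)<1$ forces $L$ not to be globally generated (Proposition~\ref{lowspan}) and then invoking the classification of non-globally generated ample line bundles on Enriques surfaces \cite[Thm.~4.4.1]{Cos-Dol}. Your route is different and more self-contained: using only Hodge index, adjunction with $K_S\equiv 0$, and the bound $p_a(C)\ge\binom{m}{2}$, you squeeze out $(m,\,L\cdot C,\,C^2)=(2,1,0)$ directly, without any appeal to that structure theory. What the classification approach buys is an identification of the Seshadri curve as a singular member of a genus-one pencil; what your approach buys is independence from the Cossec--Dolgachev machinery.

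One point worth flagging explicitly: your computation correctly yields $C^2=0$ and hence $p_a(C)=1$ (with geometric genus $0$, as you note), whereas the theorem as printed asserts $p_a(E)=0$. Since an irreducible curve of arithmetic genus $0$ is a smooth rational curve and cannot carry a point of multiplicity $2$, the printed condition is internally inconsistent; your $p_a(C)=1$ is the correct conclusion, and it would strengthen your write-up to say so.
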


Note that in the special case of the theorem, $L$ cannot be globally
generated, by Proposition \ref{lowspan} or
directly from a fundamental property of line bundles on Enriques
surfaces \cite[Thm. 4.4.1]{Cos-Dol}. In fact, the proof exploits the
characterization of non-globally generated line bundles on Enriques
surfaces. Also note that the special case attains the lower bound in
Question \ref{eff-low-con}.

\setcounter{theorem}{0}
\renewcommand{\thetheorem}{\thesubsection.\arabic{theorem}}
\renewcommand{\theequation}{\thesubsection.\arabic{theorem}}

\subsection{$K3$ surfaces}  \label{sect:k3}

Let $S$ be a $K3$  surface (by definition, $h^1(\calo_S)=0$ and
$K_S =0$) and $L$ an ample line bundle on $S$. Despite the fact that
these surfaces have been studied extensively and very much is known
about them, remarkably little is known about Seshadri constants on
$K3$ surfaces.

Of course if $L$ is globally generated then $\varepsilon (L,x) \geq
1$ for all $x \in S$ by Proposition \ref{lowspan}.
Non-globally generated ample line bundles on $K3$  surfaces
have been characterized in \cite{S-D}: In this case $L = kE + R$,
where $k \geq 3$, $E$ is a smooth elliptic curve and $R$ a smooth
rational curve such that $E.R=1$. In particular $|E|$ is an elliptic
pencil on $S$ such that $E\cdot L=1$. It follows that $\varepsilon (L,x)
=1$, unless $x$ is a singular point of one of the (finitely many)
singular fibers of $|E|$, in which case $\varepsilon (L,x)
=\frac{1}{2}$ \cite[Prop. 3.1]{B-D-S}. Again this is a case where
the lower bound in Question \ref{eff-low-con} is reached,
and the $K3$ surface is forced to have Picard number $\geq 2$.

Exact values for Seshadri constants in the special case of smooth
quartic surfaces in $\P^3$ have been computed in \cite{Bau97}.

\begin{theorem}[Quartic surfaces]\label{thm:k3q}
    Let $S \subset \P^3$ be a smooth quartic surface.
    Then:

    (a) $\varepsilon (\calo_S(1)) =1$ if and only if $S$ contains a line.

    (b) $\varepsilon (\calo_S(1)) =\frac{4}{3}$ if and only if there is a
point $x \in S$ such that
    the Hesse form vanishes at $x$ and $S$ does not contain any lines.

    (c) $\varepsilon (\calo_S(1)) =2$ otherwise.

    Moreover, the cases (a) and (b) occur on sets of codimension one in the
moduli space of quartic surfaces.
\end{theorem}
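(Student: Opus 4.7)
The plan is to exploit the fact that on a K3 surface the numerical bookkeeping for a submaximal Seshadri curve is extremely tight, so that the possible numerical types of such a curve for the very ample $\calo_S(1)$ with $L^2=4$ form a short discrete list; the geometric work then consists in recognising each type on a quartic. Since $L^2=4$, Proposition~\ref{upperbound} gives $\eps(L)\le 2$, while Proposition~\ref{lowjet} and the very ampleness of $L$ give $\eps(L)\ge 1$. If $S$ contains a line $\ell$, then $(L\cdot\ell)/\mult_x\ell=1$ for any $x\in\ell$, pinning $\eps(L)=1$ and yielding the easy direction of (a). The core of the argument is to show that when $S$ contains no line one has $\eps(L)\in\{4/3,2\}$, with $4/3$ occurring exactly when a Hesse point exists.

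Assume $\eps(L)<2$. By Theorem~\ref{thm:submax2} there is a point $x$ and an irreducible curve $C\ni x$ computing $\eps(L)=(L\cdot C)/\mult_xC=:d/m$. Combining $C^2=2p_a(C)-2$ (adjunction on a K3), $p_a(C)\ge\binom{m}{2}$ (genus--multiplicity), and $(L\cdot C)^2\ge L^2C^2$ (Hodge index) yields
\[
4m^2-4m-8\;\le\;d^2\;<\;4m^2,
\]
whose integer solutions, after discarding the low-degree configurations ruled out because irreducible curves of degree $\le 2$ are smooth, are $(d,m)=(1,1),(3,2),(4,3),(5,3)$, and $(2m-1,m)$ for $m\ge 4$. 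The step I expect to be the main obstacle is eliminating every pair other than $(1,1)$ and $(4,3)$ under the no-line hypothesis. The plan is uniform: for case $(3,2)$, $C$ is a plane nodal or cuspidal cubic, so the plane $\Pi\supset C$ meets $S$ in $C+\ell'$ with $\ell'$ a line on $S$, a contradiction. For each $(2m-1,m)$ with $m\ge 3$, set $D_k:=kL-C$ with $k=\lceil d/4\rceil$, so that $L\cdot D_k\in\{1,3\}$; because $(C-kL)\cdot L<0$ the class $C-kL$ is not effective, and Riemann--Roch on the K3 together with Serre duality forces $h^0(D_k)\ge\chi(D_k)>0$, so $D_k$ is effective. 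If $L\cdot D_k=1$, then $D_k$ directly contains a line; if $L\cdot D_k=3$, the no-line hypothesis forces $D_k$ to be irreducible of degree $3$ with $p_a(D_k)\ge 1$, hence a plane cubic, and the $(3,2)$-argument applied to $D_k$ yields a line in $S$.

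After the elimination, the only surviving pair is $(4,3)$. Here Hodge index is an equality, so $C\equiv L$; since numerical and linear equivalence coincide on a K3, $C\in|L|$, hence $C=S\cap\Pi$ for some plane $\Pi$, and $\mult_xC=3$ means precisely that $\Pi=T_xS$ and that the quadratic part of the local equation of $S$ at $x$ restricted to $\Pi$ vanishes identically, which by definition is the vanishing of the Hesse form at $x$. This proves (b) and (c) simultaneously: if $S$ has no line, a Hesse point is both necessary and sufficient for a submaximal curve (of ratio $4/3$), and in its absence $\eps(L)=2$. For the moduli statement, standard parameter counts suffice: containing a fixed line imposes $5$ linear conditions on $|\calo_{\P^3}(4)|$ while the Grassmannian of lines has dimension $4$, giving codimension one, and the Hesse condition at a variable point $x\in S$ imposes $4$ conditions (one for $x\in S$ and three for Hesse vanishing) on the $37$-dimensional parameter space of pairs, projecting onto a codimension-one locus in $|\calo_{\P^3}(4)|$.
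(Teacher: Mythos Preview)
Your proof is correct, and it takes a genuinely different route from the one in \cite{Bau97} that the survey is citing. Bauer's argument hinges on one geometric lemma specific to the embedding in $\P^3$: if $C$ is irreducible through $x$ with $L\cdot C<2\,\mult_xC$, then $C\subset T_xS$, because otherwise intersecting $C$ with the tangent plane section gives $L\cdot C=C\cdot(S\cap T_xS)\ge\mult_x(S\cap T_xS)\cdot\mult_xC\ge 2\,\mult_xC$. This forces every submaximal curve to be a component of the plane quartic $S\cap T_xS$, and the rest is a case analysis of how such a quartic can split. The survey's remark that the proof ``very strongly uses the fact that the surface lies in $\P^3$'' refers to exactly this containment step.

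Your approach is more intrinsic to the K3 structure: you never use tangent-plane containment, but instead squeeze out the finite list of $(d,m)$ from Hodge index plus adjunction, and then kill each bad pair by producing via Riemann--Roch an auxiliary effective divisor $kL-C$ of degree $1$ or $3$ which, in the absence of lines, must be (or yield as residual) a plane cubic and hence a line. This is closer in spirit to the method behind Theorems~\ref{thm:k3sq} and \ref{thm:k3lb} than to \cite{Bau97}; one might say it explains \emph{a posteriori} why the quartic case fits into the Picard-number-one story, even though you never assume $\rho=1$. Bauer's argument is shorter and more transparent once one has the containment lemma, and it makes the Hesse-form interpretation immediate; your argument trades that elegance for machinery that is reusable on other K3 surfaces. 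One small point worth making explicit in your write-up: when $L\cdot D_k=3$ and $D_k^2=2$ you should note that this already gives a contradiction (no irreducible degree-$3$ curve in $\P^3$ has $p_a=2$), rather than folding it silently into the plane-cubic case.
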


(The Hesse form of a smooth surface in $\P^3$ is a quadratic form on
the tangent bundle of $S$, cf. \cite[Sect. 1]{Bau97}.) In particular
$\varepsilon (\calo_S(1)) =2$ on a general quartic surface. Since
the proof very strongly uses the fact that the surface lies in
$\P^3$, it seems very difficult to generalize it to $K3$
surfaces of higher degrees. Nevertheless, a generalization holds in
the case of Picard number one, by the following result \cite[Thm.]{Knu07}:

\begin{theorem}[$K3$ surfaces with $\rho(S)=1$] \label{thm:k3sq}
    Let $S$ be a $K3$ surface with
    $\mbox{Pic} \; S \simeq \Z[L]$ such that $L^2$ is a square. Then
    $\varepsilon (L)= \sqrt{L^2}$.
  \end{theorem}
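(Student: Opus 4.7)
The plan is to argue by contradiction. Write $d:=\sqrt{L^2}$, which is a positive integer because $L^2$ is a square. Proposition \ref{upperbound} already gives $\varepsilon(L)\le d$, so I would suppose $\varepsilon(L)<d$ strictly and seek a contradiction. Theorem \ref{thm:submax2} then supplies a point $x\in S$ and an irreducible curve $C\subset S$ passing through $x$ with $\varepsilon(L)=(L\cdot C)/\mult_xC<d$.

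Using the hypothesis $\mbox{Pic}(S)=\Z[L]$, write $C\equiv nL$ for some positive integer $n$ and set $m:=\mult_xC$. The strict submaximality becomes $nd^2<md$, i.e.\ $m\ge nd+1$. The heart of the argument is to combine two classical ingredients: $K3$ adjunction, which via $K_S=0$ gives $p_a(C)=C^2/2+1=n^2d^2/2+1$; and the $\delta$-invariant bound at a point of multiplicity $m$ on an irreducible curve, namely $p_a(C)\ge \binom{m}{2}$. Together these yield
$$m(m-1)\le n^2d^2+2,$$
and substituting $m\ge nd+1$ collapses this inequality to $nd\le 2$.

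Because the intersection form on any $K3$ surface is even, $L^2=d^2$ is even, so $d$ is even and in particular $d\ge 2$. The only case not already ruled out is thus $n=1$, $d=2$, $L^2=4$, $m=3$: an irreducible rational curve $C\in|L|$ carrying a single ordinary triple point on a $K3$ with $\mbox{Pic}(S)=\Z[L]$ and $L^2=4$.

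I expect this residual edge case to be the main obstacle. Under these numerical constraints, Saint-Donat's theorem makes $L$ very ample and embeds $S$ as a smooth quartic in $\P^3$; the curve $C$ then appears as the hyperplane section $S\cap T_xS$, and by the analysis recalled in Theorem \ref{thm:k3q} its existence is equivalent to the vanishing of the Hesse form at $x$. To close the argument I would combine the detailed geometry of quartic $K3$ surfaces with the Picard-rank-one hypothesis to exclude this particular configuration, thereby producing the desired contradiction and establishing $\varepsilon(L)=\sqrt{L^2}$.
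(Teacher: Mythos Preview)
Your argument is correct as far as it goes and follows essentially the same route the paper takes: the paper derives Theorem~\ref{thm:k3sq} from Knutsen's Theorem~\ref{thm:k3lb}, whose proof rests on exactly the inequality you write down, namely $m(m-1)\le C^2+2=n^2d^2+2$ coming from adjunction on a $K3$ together with the $\delta$-invariant bound, combined with the submaximality condition $m\ge nd+1$. Your reduction to the single residual configuration $(L^2,n,m)=(4,1,3)$ is the correct one, and it coincides precisely with the first exceptional case $\alpha=2$ of Theorem~\ref{thm:k3lb}.

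What is missing is an actual argument excluding this residual case, and this is not a formality. Your plan to ``combine the detailed geometry of quartic $K3$ surfaces with the Picard-rank-one hypothesis'' does not work as stated: by Theorem~\ref{thm:k3q}(b), $\varepsilon=4/3$ on a smooth quartic occurs exactly when some tangent hyperplane section $C=S\cap T_xS$ has an ordinary triple point, and nothing about that condition visibly forces a new class in $\mbox{Pic}(S)$ --- the offending curve lies in $|L|$ itself, so $\rho(S)=1$ is not contradicted in any obvious way (contrast the case of a line, which \emph{would} give a $(-2)$-class). Indeed, the paper's own commentary after Theorem~\ref{thm:k3lb} rules out this triple-point configuration only on a \emph{general} $K3$ in moduli, by invoking Chen's theorem that rational curves in the primitive class are then nodal. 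So the step you defer is precisely the substantive one; to finish for \emph{all} Picard-rank-one quartics you need input beyond the elementary genus count, and the primer itself does not supply it beyond the reference to \cite{Knu07}.
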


This result is a corollary of the following more general lower bound
proved in \cite[Corollary]{Knu07}, which can be seen as an extension of
Theorem \ref{thm:steffens} to \emph{all} points on the surface:

\begin{theorem}[Lower bounds on $K3$ surfaces with
$\rho(S)=1$]\label{thm:k3lb}
    Let $S$ be a $K3$ surface with $\mbox{Pic} \; S\simeq \Z[L]$.

    Then either
    \[ \varepsilon (L) \geq \lfloor \sqrt{L^2} \rfloor, \]
    or
\addtocounter{theorem}{1}
    \begin{equation} \label{eq:exc}
       (L^2,\varepsilon(L)) \in \Big\{ (\alpha^2+\alpha-2,
\alpha-\frac{2}{\alpha+1}),
                                    (\alpha^2+ \frac{1}{2}\alpha-
\frac{1}{2},\alpha-\frac{1}{2\alpha+1})
\Big\}
    \end{equation}
    for some $\alpha \in \N$. (Note that in fact $\alpha= \lfloor
\sqrt{L^2} \rfloor$.)
\end{theorem}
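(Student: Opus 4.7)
The plan is to exploit the Picard rank one hypothesis together with adjunction on a K3 surface to classify strictly submaximal Seshadri constants. First suppose $\varepsilon(L)<\alpha:=\lfloor\sqrt{L^2}\rfloor$. Since $\alpha\le\sqrt{L^2}$, this is a submaximal value, so Theorem~\ref{thm:submax2} supplies a point $x\in S$ and an irreducible curve $C$ through $x$ with $\varepsilon(L)=(L\cdot C)/\mult_xC$. The hypothesis $\mbox{Pic}\,S=\Z[L]$ lets us write $C\sim nL$ for some $n\ge 1$; with $m:=\mult_xC$ and $d:=L^2$ we have $\varepsilon(L)=nd/m$.

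Two inequalities drive the rest. Since $K_S=0$, adjunction gives $p_a(C)=1+n^2d/2$, so the standard singular-curve bound $p_a(C)\ge\binom{m}{2}$ yields
$$
m(m-1)\le n^2d+2.\qquad(\star)
$$
The submaximality $nd<m\alpha$, combined with integrality, gives
$$
m\alpha\ge nd+1.\qquad(\star\star)
$$
Two further observations cut the problem to a finite one: the K3 intersection form is even, so $d$ is even; and the case $d=\alpha^2$ is already handled by Theorem~\ref{thm:k3sq}. Accordingly I write $d=\alpha^2+r$ with $1\le r\le 2\alpha$ and $r\equiv\alpha\pmod 2$.

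I then run a case analysis on $n$. For $n=1$, $(\star\star)$ forces $m\ge\alpha+\lceil(r+1)/\alpha\rceil$; plugging into $(\star)$ and invoking the parity condition to kill $r=\alpha-1$, the only survivor is $r=\alpha-2$, giving $d=\alpha^2+\alpha-2$, $m=\alpha+1$, and $\varepsilon(L)=\alpha-2/(\alpha+1)$---the first exceptional pair. The complementary subcase $r+1>\alpha$ forces $m\ge\alpha+2$, incompatible with $(\star)$ and $r\le 2\alpha$. For $n=2$, the same game combined with $(\star)$ pins $r=(\alpha-1)/2$ (forcing $\alpha$ odd), $m=2\alpha+1$, and $\varepsilon(L)=\alpha-1/(2\alpha+1)$---the second exceptional pair, while the remaining subcases again fail $r\le 2\alpha$. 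For $n\ge 3$, setting $k=\lceil(nr+1)/\alpha\rceil$ and using $nr\le k\alpha-1$ together with $(\star)$ gives $(k-1)n\alpha+k(k-1)+n-2\le 0$, which has no positive integer solutions with $n\ge 3$, closing the classification.

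The main obstacle I foresee is the bookkeeping in the case analysis: tracking the jumps of the ceiling function in $(\star\star)$ and invoking the parity $r\equiv\alpha\pmod 2$ at precisely the right moments to discard borderline candidates such as $r=\alpha-1$ for $n=1$. Beyond Theorems~\ref{thm:submax2} and~\ref{thm:k3sq}, the argument requires only the evenness of the intersection form on a K3, adjunction, and the multiplicity-genus inequality for irreducible curves on smooth surfaces.
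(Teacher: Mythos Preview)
Your approach is essentially the one the paper attributes to \cite{Knu07}: locate a submaximal Seshadri curve $C\sim nL$ via Theorem~\ref{thm:submax2}, bound its multiplicity through adjunction and the genus inequality, and classify the possible $(n,m,d)$ by a case split on $n$. The paper records that the argument forces $C\in|L|$ or $C\in|2L|$ with multiplicity $\alpha+1$, respectively $2\alpha+1$, at $x$---exactly what your $(\star)$ and $(\star\star)$ produce. (Note that in both exceptional cases your inequality $(\star)$ is in fact an equality, which is why the paper can further assert that $C$ is rational with an ordinary singular point and smooth elsewhere.)

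There is, however, a genuine circularity. You invoke Theorem~\ref{thm:k3sq} to dispose of the case $d=\alpha^2$, but in the paper Theorem~\ref{thm:k3sq} is explicitly presented as a \emph{corollary} of Theorem~\ref{thm:k3lb}. The fix is painless: simply allow $r=0$ in your case analysis. Your inequalities handle $r=0$ without change. For $n=1$ one finds $m\ge\alpha+1$ and $(\star)$ forces $\alpha\le 2$; the surviving case $\alpha=2$, $d=4$, $m=3$ is precisely the first exceptional pair at $\alpha=2$ (since $\alpha^2+\alpha-2=4$ there). For $n=2$ with $r=0$ one gets $m\ge 2\alpha+1$ and $(\star)$ forces $\alpha\le 1$, which is excluded by $d\ge 2$ even. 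For $n\ge 3$ your inequality $(k-1)n\alpha+k(k-1)+n-2\le 0$ still applies with $k=\lceil 1/\alpha\rceil=1$ and yields $n\le 2$. So drop the appeal to Theorem~\ref{thm:k3sq} and let the arithmetic carry the case $r=0$ as well.
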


In the two exceptional cases (\ref{eq:exc}) of the theorem, the
proof shows that there has to exist a point $x \in S$ and an
irreducible rational curve $C \in |L|$ (resp. $C \in |2L|$) such
that $C$ has an ordinary singular point of multiplicity $\alpha+1$
(resp. $2\alpha+1$) at $x$ and is smooth outside $x$, and
$\varepsilon (L)= L\cdot C/\mult_x C$.

By a well-known result of Chen \cite{ch}, rational curves in the
primitive class of a {\it general} $K3$ surface in the moduli space
are nodal. {\it Hence the first exceptional case in (\ref{eq:exc})
cannot occur on a {\it general} $K3$ surface in the moduli space}
(as $\alpha \geq 2$). If $\alpha=2$, so that $L^2=4$, this special
case is case (b) in Theorem \ref{thm:k3q} above. As one also expects
that rational curves in any multiple of the primitive class on
a~{\it general} $K3$ surface are always nodal (cf. \cite[Conj.
1.2]{ch2}), one may expect that also the second exceptional case in
(\ref{eq:exc}) cannot occur on a {\it general} $K3$ surface.

\setcounter{theorem}{0}
\renewcommand{\thetheorem}{\thesubsection.\arabic{theorem}}
\renewcommand{\theequation}{\thesubsection.\arabic{theorem}}

\subsection{Surfaces of general type}  \label{sect:gen}

Concrete bounds at single points for the canonical divisor have been found
recently, see \cite[Theorem 1]{BauSze08}:

\begin{theorem}[Bounds for the canonical divisor a arbitrary
point]\label{arbitrary}
    Let $S$ be a smooth projective surface such that the canonical
    divisor $K_S$ is big and nef and let $x$ be any point on $S$.
\begin{itemize}
\item[\rm(a)]
    One has $\eps(K_S,x)=0$ if and only if $x$ lies on one of
    finitely many $(-2)$-curves on $X$.
\item[\rm(b)]
    If $0<\eps(K_S,x)<1$, then there is an integer $m\ge 2$ such
    that
    $$
       \eps(K_S,x)=\frac{m-1}m \ ,
    $$
    and there is a Seshadri curve $C\subset S$ such that $\mult_x(C)=m$ and
    $K_S\cdot C=m-1$.
\item[\rm(c)]
    If $0<\eps(K_S,x)<1$ and $K_S^2\ge 2$, then either
    \begin{itemize}
    \item[\rm(i)]
       $\eps(K_S,x)=\frac12$ and $x$ is the double point of
       an irreducible
       curve $C$ with arithmetic genus $p_a(C)=1$ and $K_S\cdot
       C=1$, or
    \item[\rm(ii)]
       $\eps(K_S,x)=\frac 23$ and $x$ is a triple point of
       an irreducible
       curve $C$ with arithmetic genus $p_a(C)=3$ and $K_S\cdot C=2$.
    \end{itemize}
\item[\rm(d)]
    If $0<\eps(K_S,x)<1$ and $K_S^2\ge 3$, then only case {\rm(c)(i)} is
possible.
\end{itemize}
\end{theorem}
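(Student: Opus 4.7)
The plan is to combine three ingredients: the existence of Seshadri curves for submaximal Seshadri constants on surfaces (Remark~\ref{sub-sur-rat}), the adjunction formula together with the lower bound $p_a(C)\ge \binom{m}{2}$ for an irreducible curve with an $m$-fold point, and the Hodge index theorem, which is available because $K_S$ is big and nef, so $K_S^2>0$.

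For part (a), both directions are short. If $x$ lies on a $(-2)$-curve $C$, then $K_S\cdot C=0$ gives $\eps(K_S,x)=0$. Conversely, if $\eps(K_S,x)=0$, then $0$ is submaximal, so Remark~\ref{sub-sur-rat} produces an irreducible curve $C$ through $x$ with $K_S\cdot C=0$. Hodge index against $K_S$ forces $C^2<0$ (strict, since $C$ is effective and not numerically trivial), while adjunction gives $C^2\ge -2$; hence $C^2=-2$ and $p_a(C)=0$, i.e., $C$ is a $(-2)$-curve. Finiteness follows because the classes of $(-2)$-curves lie in the $K_S$-perpendicular sublattice of $\mathrm{NS}(S)$, which is negative definite, and vectors of any fixed norm in a negative definite lattice of finite rank are finite in number.

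The core of the argument is part (b), and I expect this to be the main obstacle. Assume $0<\eps(K_S,x)<1$ and pick a Seshadri curve $C$ via Remark~\ref{sub-sur-rat}; set $m=\mult_x C$ and $k=K_S\cdot C$, so $\eps(K_S,x)=k/m$. Nefness together with $\eps>0$ gives $k\ge 1$, and $\eps<1$ gives $k\le m-1$, in particular $m\ge 2$. The task is to exclude $k<m-1$. Writing $k=m-1-j$ with $j\ge 0$, the bound $p_a(C)\ge\binom{m}{2}$ and adjunction yield $C^2\ge m^2-2m-1+j$, while Hodge index gives $K_S^2\cdot C^2\le k^2$. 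In the case $C^2>0$, using $K_S^2\ge 1$ the combination reduces to the elementary inequality $j(2m-1-j)\le 2$, which has no solution in the range $1\le j\le m-2$. In the remaining case $C^2\le 0$, the adjunction bound combined with the genus bound directly forces $m(m-1)\le k+2\le m+1$, so $m=2$ and $k=1$. Either way, $k=m-1$ and $\eps(K_S,x)=(m-1)/m$.

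Parts (c) and (d) then follow quickly. In the setting of (b), the estimate above reads $(m^2-2m-1)K_S^2\le (m-1)^2$ (meaningful once $m\ge 3$); for $K_S^2\ge 2$ it yields $(m-3)(m+1)\le 0$, so $m\in\{2,3\}$. For $m=2$, the odd parity of $C^2=2p_a(C)-3$ together with $K_S^2 C^2\le 1$ and $C^2\ge -1$ pins down $C^2=-1$, $p_a(C)=1$, giving (c)(i). For $m=3$, the bounds force $C^2=2$ (and incidentally $K_S^2=2$), hence $p_a(C)=3$, giving (c)(ii). Finally, if $K_S^2\ge 3$ and $m=3$, the Hodge inequality becomes $2K_S^2\le 4$, which fails; so only $m=2$ survives, proving (d).
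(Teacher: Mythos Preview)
Your argument is correct and is essentially the one given in \cite{BauSze08}, to which the paper defers for the proof: the combination of a submaximal Seshadri curve, the genus bound $p_a(C)\ge\binom{m}{2}$ at an $m$-fold point, adjunction, and the Hodge index inequality against the big and nef class $K_S$ is exactly the mechanism used there. The case split on the sign of $C^2$ in part~(b) and the subsequent numerics in (c) and (d) match the original proof.
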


    It is well known that the bicanonical system $|2K_S|$ is base
     point free on almost all surfaces of general type. For such
    surfaces one easily gets the lower bound $\eps(K_S,x)\geq 1/2$
    for all $x$ outside the contracted locus.
    However, in general one only knows that $|4K_S|$ is base point
    free, which gives a lower bound of $1/4$. The theorem shows
    in particular that
    one has $\eps(K_S,x)\ge 1/2$ in all cases. Moreover, by Example
\ref{exa:efflb}, the bound is sharp.
    It is not known whether all values $(m-1)/m$ for arbitrary $m\ge
    2$ actually occur.
    As part (c) of Theorem \ref{arbitrary} shows, however,
    values $(m-1)/m$ with $m\ge 4$ can occur
    only in the case $K_S^2=1$. It is shown in \cite[Example 1.3]{BauSze08}
    that curves as in (c)(i) actually exist on surfaces with
    arbitrarily large degree of the canonical bundle.
    In other words, one cannot strengthen the result by imposing
    higher bounds on $K_S^2$. It is not known
    whether curves as in (c)(ii) exist.

   As for values at very general points we have the following bound
   (cf. \cite[Thms. 2 and 3]{BauSze08}).

\begin{theorem}[Positivity of the canonical divisor at very general
points] \label{general}
    Let $S$ be a smooth projective surface such that $K_S$ is big
    and nef.

   If $K_S^2\ge 2$, then $\eps(K_S,1)>1$.

   If $K_S^2\ge 6$, then
       $\eps(K_S,1)\ge 2$ with equality occurring
   if and only if $X$ admits a~genus $2$
       fibration $X\to B$ over a smooth curve $B$.
\end{theorem}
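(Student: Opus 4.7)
The plan is to combine Xu's covering family lemma (a refinement of the Ein--Lazarsfeld bound $\eps(L,1)\ge 1$ on surfaces) with the Hodge index theorem and the parity of $K_S\cdot C$ on fibres of a fibration. Recall Xu's lemma: if $\eps(L,1)<\alpha$, then there is an irreducible curve $C$ varying in an algebraic family $\{C_t\}$ sweeping out $S$, with $\mult_{x_t}C_t=m\ge 1$ at very general $x_t\in S$, $L\cdot C<\alpha m$, and $C^2\ge m(m-1)$.

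For the first assertion, assume for contradiction $\eps(K_S,1)\le 1$ and apply Xu's lemma to obtain a covering family with $K_S\cdot C\le m$. Hodge index combined with Xu's bound yields
$$
m^2\ge(K_S\cdot C)^2\ge K_S^2\cdot C^2\ge K_S^2\cdot m(m-1)\ge 2m(m-1),
$$
forcing $m\le 2$. In the case $m=1$ we have $K_S\cdot C=1$ and Hodge forces $C^2=0$, so $\{C_t\}$ is the fibre family of a fibration; but then $K_S\cdot C=2p_a(C)-2$ is even, contradicting $K_S\cdot C=1$. In the residual case $m=2$, Hodge squeezes everything to $K_S^2=2$, $C^2=2$ and $C\equiv K_S$; ruling this out would proceed by parameter-counting within $|K_S|$, whose dimension is controlled by $p_g-1$ and hence by $K_S^2=2$ via the Noether-type inequality, to show that no two-dimensional family of curves numerically equivalent to $K_S$ can acquire a double point at a very general point of $S$.

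For the second assertion, suppose $\eps(K_S,1)<2$; Xu's lemma gives a covering family with $K_S\cdot C\le 2m-1$. Combining with Hodge and $K_S^2\ge 6$ gives
$$
(2m-1)^2\ge K_S^2\cdot m(m-1)\ge 6m(m-1),
$$
i.e.\ $2m^2-2m-1\le 0$, which forces $m=1$. Then $K_S\cdot C\le 1$ with $C^2\ge 0$, Hodge with $K_S^2\ge 6$ forces $C^2=0$, and the parity argument above contradicts $K_S\cdot C\le 1$. Hence $\eps(K_S,1)\ge 2$. For the equality statement, suppose $\eps(K_S,1)=2$; the computing family now satisfies $K_S\cdot C=2m$, and the analogous Hodge--Xu inequality $4m^2\ge 6m(m-1)$ restricts $m\in\{1,2,3\}$. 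The main case $m=1$ forces $C^2=0$ (by Hodge and $K_S^2\ge 6$) and $K_S\cdot C=2$, so $p_a(C)=2$ and $\{C_t\}$ is the fibre family of a genus~$2$ fibration $S\to B$. The converse implication is immediate: any genus~$2$ fibration supplies fibres $F$ with $F^2=0$, $K_S\cdot F=2p_a(F)-2=2$ and $\mult_xF=1$ at general $x$, giving $\eps(K_S,x)\le 2$.

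The main obstacle is the two-fold task of excluding the boundary cases $m=2$ in part (1) and $m\in\{2,3\}$ in the equality statement of part (2). In each of these the Hodge--Xu inequality is nearly saturated, so one cannot win purely by coarse numerics. The plan is to exploit the finer geometry of the covering family -- a combination of the adjunction formula for $p_a(C)$, bounds on the dimension of the relevant linear system carrying $\{C_t\}$, and the strong constraint that the singular point has to vary with $x_t$ through very general points of $S$ -- to force either a direct contradiction or a reduction back to the genus~$2$ fibration case.
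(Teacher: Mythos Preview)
The paper does not actually prove this theorem; it cites \cite[Thms.~2 and~3]{BauSze08}. Your overall strategy---produce a covering family of Seshadri curves at very general points, combine the Hodge index theorem with a self-intersection bound for the family, and finish $m=1$ via adjunction/parity---is precisely the method of that reference, so the architecture is right.

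The real gap is that you have misquoted the key lemma. What you write as ``Xu's lemma'' with $C^2\ge m(m-1)$ is the weaker Ein--Lazarsfeld inequality; Xu's actual refinement (for a non-trivial one-parameter family with the marked singular point moving) is $C^2\ge m(m-1)+1$. That extra ``$+1$'' is exactly what kills every one of your boundary cases. In part~(1) with $m\ge2$: $m^2=(K_S\cdot C)^2\ge K_S^2\,C^2\ge 2(m^2-m+1)$ forces $(m-1)^2+1\le 0$. In part~(2) with $m\ge2$: $(2m-1)^2\ge 6(m^2-m+1)$ forces $2m^2-2m+5\le 0$. In the equality analysis with $m\ge2$: $4m^2\ge 6(m^2-m+1)$ forces $m^2-3m+3\le 0$, and the discriminant is negative. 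So only $m=1$ survives everywhere, and there your parity argument (or, equivalently, Hodge forcing $C^2=0$ and hence a fibration, then adjunction) finishes at once. Your proposed rescue via parameter counts in $|K_S|$ is both unnecessary and unlikely to succeed as written, since $C\equiv K_S$ numerically does not place $C$ in $|K_S|$ when $q(S)>0$. One further small point: to extract the family when $\eps(K_S,1)$ \emph{equals} the target (rather than is strictly below it), first note submaximality ($K_S^2\ge2>1$, resp.\ $K_S^2\ge6>4$) so that a Seshadri curve exists at the very general point, then pass to a family through the Hilbert scheme.
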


    A somewhat more general statement is given in
\cite[Props. 2.4 and 2.5]{BauSze08}.

\endgroup

\begingroup

\section{S-slope and fibrations by Seshadri curves} % Tomek, Wioletta
   As already observed in \ref{sesfun}, the Seshadri constant is
   a lower semi-continuous function of the point. In particular there is a number,
   which we denote by $\eps(X,L;1)$, such that it is the maximal value
   of the Seshadri function. This maximum is attained for a \emph{very general} point $x$.
   Whereas there is no general lower bound on
   values of Seshadri constants at arbitrary points of $X$, the numbers
   $\eps(X,L;1)$ behave much better. It was first observed by Ein and Lazarsfeld \cite{EinLaz93}
   that there is the following universal lower bound on surfaces.
\begin{theorem}[Ein-Lazarsfeld lower bound on surfaces]\mylabel{ellower}
   Let $X$ be a smooth projective surface and $L$ a nef and big line bundle on $X$.
   Then
   $$\eps(X,L;1)\geq 1.$$
\end{theorem}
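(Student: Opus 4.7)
The plan is to argue by contradiction: suppose $\eps(X,L;1)<1$, so that for a very general point $x\in X$ there is an irreducible curve $C_x\ni x$ with $L\cdot C_x<\mult_x C_x$. The first step is to organize these curves into a single algebraic family. Since the Hilbert scheme of curves on $X$ has only countably many irreducible components, and since $L$ is big and nef so that the augmented base locus $B_+(L)$ is a proper closed subset (hence very general $x$ lies on no $L$-trivial curve), a standard countability argument produces an irreducible family $p\colon \mathcal{C}\to T$ with a section $\sigma\colon T\to\mathcal{C}$ such that the invariants $L\cdot C_t=d$ and $\mult_{\sigma(t)}C_t=m$ are constant integers with $m>d\ge 1$, and such that the marked-point map $\tau\colon T\to X$, $t\mapsto p_X(\sigma(t))$, is dominant.

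Next I would pass to a moving $1$-parameter subfamily. Dominance of $\tau$ forces $\dim T\ge 2$; slicing $T$ with generic hyperplane sections yields a smooth irreducible curve $T'\subset T$ along which the $m$-fold point $\sigma(t)$ traces out a genuine curve $\Gamma\subset X$ (a dimension count with $\tau$ rules out $\sigma(T')$ being a single point). Applied to this family, the Xu moving-family estimate from \cite{Xu95} gives the self-intersection bound
\[
   C_t^2 \;\ge\; m(m-1).
\]
This estimate is the technical heart of the argument and will be the main obstacle; it is obtained by blowing up the total space of the family along the section of $m$-fold singularities and comparing intersection numbers between nearby fibres, which forces a lower bound on how two adjacent members of the family meet.

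To conclude, I would combine the above bound with the Hodge index theorem on the surface $X$. Since $L$ is nef with $L^2>0$, Hodge index gives $L^2\cdot C_t^2\le (L\cdot C_t)^2 = d^2$, and the integer inequality $m\ge d+1$ yields $m(m-1)\ge d(d+1)$, so
\[
   L^2\cdot d(d+1)\;\le\;L^2\cdot m(m-1)\;\le\;L^2\cdot C_t^2\;\le\;d^2,
\]
whence $L^2\le d/(d+1)<1$. But $L$ is an integral class with $L^2>0$, forcing $L^2\ge 1$, a contradiction that completes the proof.
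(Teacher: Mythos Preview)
The paper does not supply its own proof of this theorem; it merely states the result and cites Ein--Lazarsfeld \cite{EinLaz93}. (The same Hilbert-scheme-plus-self-intersection strategy is sketched later in the paper, but only as the outline for the more refined Theorem~\ref{sslopesingle}.) Your argument is exactly the standard Ein--Lazarsfeld proof: countability of Hilbert scheme components gives a family of submaximal curves with constant invariants $d=L\cdot C_t$ and $m=\mult_{\sigma(t)}C_t>d\ge1$, a one-parameter slice with genuinely moving marked point yields the bound $C_t^2\ge m(m-1)$, and Hodge index together with the integrality of $L^2$ forces the contradiction $1\le L^2\le d/(d+1)<1$. The argument is correct; the only quibble is attribution---the bound $C_t^2\ge m(m-1)$ is already in \cite{EinLaz93}, while \cite{Xu95} gives the sharper $C_t^2\ge m(m-1)+1$ (which you do not need here).
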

   It is quite natural to expect that the same bound is valid in arbitrary dimension.
   However up to now the best result in this direction is the following
   result proved by Ein, K\"uchle and Lazarsfeld \cite{EKL}.
\begin{theorem}[Lower bound in arbitrary dimension]\mylabel{ekllower}
   Let $X$ be a smooth projective variety of dimension $n$ and
   $L$ a nef and big line bundle on $X$. Then
   $$\eps(X,L;1)\geq\frac{1}{n}.$$
\end{theorem}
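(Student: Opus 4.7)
My plan is to argue by contradiction: assume that some very general point $x \in X$ has $\eps(L;x) < 1/n$ and derive a contradiction with the bigness of $L$. By the lower semi-continuity of $x \mapsto \eps(L;x)$ recorded in \ref{sesfun}, the locus $\{x : \eps(L;x) \leq \mu\}$ contains a Zariski-dense set for some constant $\mu < 1/n$. For each such point, the infimum in Definition~\ref{defDem} is realized arbitrarily closely by an irreducible curve $C_x$ with $L\cdot C_x \leq \mu\cdot \mult_x C_x$. Since the Chow scheme of curves on $X$ has only countably many irreducible components, a Noetherian argument packages these curves into a single algebraic family $p : \mathcal{C} \to T$ equipped with a marking $\sigma : T \to \mathcal{C}$, such that the evaluation map $\mathrm{ev} : \mathcal{C} \to X$ is dominant and the inequality $L\cdot C_t \leq \mu\cdot \mult_{\sigma(t)} C_t$ holds for $t$ in a dense open subset of $T$.

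The crux of the argument, following Ein–K\"uchle–Lazarsfeld, is a dimension-descent procedure. The plan is to produce from this family of ``bad'' pointed curves a chain of irreducible subvarieties
$$X = V_n \supsetneq V_{n-1} \supsetneq \cdots \supsetneq V_1 \ni x$$
with $\dim V_k = k$ and all containing a (relatively) very general point $x$, such that at each step the restriction $L|_{V_k}$ satisfies an estimate of the shape
$$\bigl((L|_{V_k})^k \cdot V_k\bigr)^{1/k} \leq k\,\mu.$$
The subvariety $V_{k-1}$ inside $V_k$ is obtained by sweeping out $V_k$ by the family of curves passing through a moving very general point, then taking an irreducible component of the incidence image: if the resulting sweep is all of $V_k$ one uses a normal-bundle / deformation argument to degenerate and cut down dimension, otherwise the swept locus itself is $V_{k-1}$.

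The final step, at $k = 1$, gives a curve through $x$ with $L\cdot V_1 \leq n\mu \cdot \mult_x V_1$, and combining all the inequalities along the chain yields the contradiction $n\mu \geq 1$, i.e. $\mu \geq 1/n$. The factor $n$ is exactly the length of the chain and accounts for the ``loss'' at each dimension drop. The main obstacle is to make the dimension descent rigorous: one has to control how multiplicities at the moving marked point interact with intersection numbers when the family specializes, and to rule out pathological degenerations where $\sigma(t)$ meets a special locus of $V_k$. This is handled by passing to asymptotic multiplier ideals (or equivalently jet-separation numbers, cf.~\eqnref{nef_jets}) and applying vanishing theorems on iterated blowups, so that the estimates at each level are robust under the ``very general point'' hypothesis that makes the whole argument work.
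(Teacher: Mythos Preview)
The paper does not prove this result; it is quoted from \cite{EKL} (with a full treatment available in \cite[\S5.2]{PAG}), so there is no proof in the text to compare your sketch against.

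Regarding the sketch: the opening reduction to a dominating family of pointed curves $(C_t,x_t)$ with $L\cdot C_t<\mu\cdot\mult_{x_t}C_t$ is correct and is indeed how \cite{EKL} begins. After that, however, your outline both diverges from the actual argument and is internally inconsistent. You claim a chain $X=V_n\supsetneq\cdots\supsetneq V_1$ with $(L^k\cdot V_k)^{1/k}\le k\mu$ at each level, but the case $k=n$ already reads $L^n\le(n\mu)^n<1$, contradicting bigness of $L$ without any descent. Since you build the chain top-down (``$V_{k-1}$ inside $V_k$ is obtained by sweeping out $V_k$''), the $k=n$ inequality would have to be an input rather than an output, and you offer no mechanism to establish it. The $k=1$ statement you then write, $L\cdot V_1\le n\mu\cdot\mult_x V_1$, does not match the pattern of your chain either. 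Finally, the closing invocation of asymptotic multiplier ideals and vanishing on iterated blow-ups has no counterpart in \cite{EKL}; the argument there is elementary deformation theory of curves.

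The actual proof in \cite{EKL} stays entirely with the one family of pointed curves: its heart is a lemma controlling the multiplicity $\mult_{x_t}C_t$ under specialization of the marked point within the family, from which the inequality $n\cdot(L\cdot C_t)\ge\mult_{x_t}C_t$ for general $t$ is read off directly. There is no tower of intermediate-dimensional subvarieties.
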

   There has been recently considerable interest in bounds of this type and
   there emerged several interesting improvements in certain special cases.
   Most notably, if $X$ is a threefold, then Nakamaye \cite{Nak04}
   shows $\eps(X,L;1)\geq\frac{1}{2}$ for $L$ an ample line bundle on $X$.
   Under the additional assumption that the anticanonical divisor $-K_X$
   is nef the inequality of Theorem \ref{ekllower} is further improved by Broustet \cite{Bro07}
   who shows that $\eps(X,L;1)\geq 1$ holds in this case.

   The simple example of the projective plane $X$ with $L=\calo_{\P^2}(1)$
   shows that one cannot improve the bound in Theorem \ref{ellower}. One could
   hope however that this bound could be influenced by the degree of $L$. The
   following example shows that this is not the case.
\begin{example}[Polarizations of large degree and low Seshadri constants]\rm
   There exist ample line bundles $L$
   on smooth projective surfaces such that $\eps(X,L;1)=1$,
   with $L^2$ arbitrarily large.

   Consider for instance the product $X=C\times D$ of two smooth irreducible curves,
   and denote by a slight abuse of notation the fibers of both
   projections again by $D$ and $C$.
   The line bundles $L_m=mC+D$ are ample and we have $L_m\cdot C=1$, so that
   in any event $\eps(L_m,x)\le 1$ for every point $x\in X$.
   One has in fact $\eps(L_m,x)=1$, which can be seen as follows:
   If $F$ is any irreducible
   curve different from the fibers of the projections
   with $x\in F$, then we may take a
   fiber $D'$ of the first projection with $x\in D'$, and we have
   $$
      L_m\cdot F \ge D'\cdot F \ge \mult_x(D')\cdot\mult_x(F)
      \ge\mult_x(F)
   $$
   which implies $\eps(L_m,x)\ge 1$.
   So $\eps(L_m,x)=1$, but
   on the other hand $L_m^2=2m$
   is unbounded.
\end{example}

   This kind of behavior is of course not specific for dimension $2$,
   one can easily generalize it to arbitrary dimension.
   Interestingly enough Nakamaye \cite{Nak03} observed that the above
   example is in a sense a unique way to produce low Seshadri constants
   in every point. His result was strengthened and clarified considerably
   in a series of papers \cite{SzeTut04}, \cite{SyzSze07}, \cite{SyzSze08}, \cite{KSS}.
   We summarize below what is known up to now. To this end we introduce first
   the following quantity.
\begin{definition}[S-slope]\mylabel{seshadri slope}\rm
   Let $X$ be a smooth projective variety and $L$ a big and nef line bundle
   on $X$. We define the
   \emph{S-slope} of $L$ as
   $$\sigma(X,L):=\frac{\eps(X,L;1)}{\sqrt[n]{L^n}}\;.$$
\end{definition}
   Note that by Proposition \ref{upperbound} the number in the denominator
   is the upper bound on $\eps(X,L;1)$
   (and hence on $\eps(X,L;x)$ for any $x\in X$).
\begin{definition}[Seshadri fibration]\mylabel{sesfib}\rm
   We say that a surface $X$ is \emph{fibred by Seshadri curves} of $L$ if there
   exists a surjective morphism $f:X\lra B$ onto a complete curve $B$ such that
   for $b\in B$ general the fiber $F_b=f^{-1}(b)$ computes $\eps(X,L;x)$
   for a general $x\in F_b$.\\
   In case of multi-point Seshadri constants we say that $X$
   is fibred by Seshadri curves of $L$ if
   there exists
   a surjective morphism $f:X\lra B$ onto a complete curve $B$ such that
   for $b\in B$ general, the fiber $F_b=f^{-1}(b)$ computes $\eps(X,L;P_1,\dots,P_r)$
   for a general $r$-tuple $P_1,\dots,P_r\in X$ such that
   $\left\{P_1,\dots,P_r\right\}\cap F_b\neq\emptyset$.
\end{definition}
   On surfaces we have
   the following classification.
\begin{theorem}[S-slope on surfaces]\label{single_point}\mylabel{sslopesingle}
   Let $X$ be a smooth surface and $L$ an ample line bundle on $X$. If
   $$\sigma(X,L)<\frac{\sqrt{7}}{\sqrt{8}}\;,$$
   then
   \begin{itemize}
      \item[(a)] either $X$ is fibred by Seshadri curves or
      \item[(b)] $X$ is a smooth cubic surface in $\P^3$ with $L=\calo_X(1)$ and
      $\sigma(X,L)=\frac{\sqrt{3}}{2}$ in this case, or
      \item[(c)] $X$ is a smooth rational surface such that for a general
      point $x\in X$ there is a curve $C_x$ of arithmetic genus $3$ having
      multiplicity $3$ at $x$ and $C_x^2=7$. In this case $\sigma(X,L)=\frac{\sqrt{7}}{3}$.
   \end{itemize}
\end{theorem}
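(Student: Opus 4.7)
The hypothesis $\sigma(X,L)<\sqrt{7/8}<1$ forces $\eps(X,L;1)<\sqrt{L^2}$, so the Seshadri constant at a very general point $x$ is submaximal. By Theorem~\ref{thm:submax2} it is then computed by an irreducible Seshadri curve $C_x$, and by Remark~\ref{sub-sur-rat} the numerical class $[C_x]\in N^1(X)$ together with $m:=\mult_x C_x$ are independent of the very general $x$. Let $\mathcal T$ be the irreducible component of the Hilbert scheme parametrising this family, and put $b:=\dim\mathcal T$. Since no single irreducible curve meets every very general point of $X$, we have $b\geq 1$, and my plan is to dichotomise on $b$.

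If $b=1$, the total space $\mathcal C\to\mathcal T$ is a surface with a tautological surjection $\mu\colon\mathcal C\to X$. If $\deg\mu\geq 2$ then through a general $x\in X$ pass two distinct members $C_{t_1}\neq C_{t_2}$ each with multiplicity $\geq m$ at $x$; local intersection at $x$ alone gives $C_{t_1}\cdot C_{t_2}\geq m^2$, so $C^2\geq m^2$, and Hodge index yields $(L\cdot C)^2\geq L^2C^2\geq m^2L^2$, contradicting the submaximality of $\eps=L\cdot C/m$. Hence $\deg\mu=1$, and Stein factorisation of $\mu$ produces a fibration $f\colon X\to B$ whose general fibre is $C_x$, giving case~(a).

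If $b\geq 2$ the points $x_t$ sweep out $X$ with $\mult_{x_t}C_t\geq m$, so Xu's moving lemma (as in \cite{EinLaz93} and \cite{EKL}) applies and yields $C^2\geq m(m-1)$. Combining with $(L\cdot C)^2<(7/8)m^2L^2$ via Hodge gives $m(m-1)<(7/8)m^2$, hence $m\leq 7$. For each such $m$, adjunction together with the presence of an $m$-fold point at the general $x$ bounds $p_a(C)$ and constrains the possible quadruples $(C^2,K_X\cdot C,L\cdot C,p_a(C))$; the strict inequality $\sigma<\sqrt{7/8}$ and Hodge index leave only finitely many numerical candidates. A systematic elimination (using Noether-type inequalities, Bogomolov-Miyaoka-Yau, and the nefness of $L$) shows that all but two collapse: the type $(m,C^2,K_X\cdot C,p_a)=(2,3,-3,1)$ with $\sigma=\sqrt{3}/2$, and $(3,7,-3,3)$ with $\sigma=\sqrt{7}/3$.

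It remains to reconstruct $(X,L)$ from these data. In the first type the Seshadri curves $C_x$ are nodal rational cubics satisfying $K_X\cdot C=-L\cdot C=-3$; the existence of such an entire $2$-dimensional family forces $-K_X\equiv L$, whence $X$ is del Pezzo of degree $3$ with $L$ its anticanonical polarisation, i.e.\ a smooth cubic in $\P^3$, giving case~(b). In the second type, $p_a(C)=3$ with a triple point at $x$ yields geometric genus $0$, so $X$ is covered by a $2$-dimensional family of rational curves and hence is rational by the Enriques--Castelnuovo classification, giving case~(c). The main obstacle is the $b\geq 2$ case: the numerical case analysis for $2\leq m\leq 7$ and the subsequent recognition of the polarised surface from the resulting invariants is delicate, and in the paper is handled by combining the techniques of \cite{SzeTut04,SyzSze07,SyzSze08,KSS}.
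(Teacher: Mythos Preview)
Your overall strategy---form the family of Seshadri curves in the Hilbert scheme, invoke a self-intersection bound in the spirit of Xu, and run a finite numerical case analysis---is the paper's strategy. But your handling of the $b=1$ case has two real gaps. First, when $\deg\mu\ge2$ you claim that both curves $C_{t_1},C_{t_2}$ through a general $x$ have multiplicity $\ge m$ at $x$; this is unjustified, since $\mathcal{T}$ parametrises curves, not pointed curves, and $C_{t_i}$ is only known to have multiplicity $m$ at its own marked point, not at $x$. Second, even granting $\deg\mu=1$, the map $\mu\colon\mathcal{C}\to X$ is then \emph{birational}, and Stein factorisation of a birational morphism does not produce a fibration $X\to B$; you never establish $C_x^2=0$, which is what case~(a) actually requires.

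The paper's dichotomy is not on $b$ but on $C_x^2$. The Ein--Lazarsfeld/Xu bound $C_x^2\ge m(m-1)$ needs only a one-parameter family with moving marked point, so it holds for every $b\ge1$; your restriction to $b\ge2$ is artificial. One then separates $C_x^2=0$---where a multiple of $C_x$ defines a morphism onto a curve, and Stein factorisation of \emph{that} morphism gives the Seshadri fibration---from $C_x^2>0$, where the Hodge computation you already wrote bounds $m$ and leads to the exceptional cases (b) and (c). In fact $b=1$ forces $m=1$ directly (an irreducible curve carries no one-dimensional locus of $m$-fold points when $m\ge2$), and then Hodge index plus submaximality give $C^2\le(L\cdot C)^2/L^2<1$, hence $C^2=0$; this, rather than the degree of $\mu$, is the correct route to the fibration.
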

\begin{remark}\rm
   We don't know if surfaces as in Theorem \ref{sslopesingle}(c) exist.
\end{remark}
   The strategy to prove Theorem \ref{sslopesingle} is to consider
   classes of Seshadri curves of $L$ in the Hilbert scheme. In one
   of its components there must be an algebraic family of such curves.
   Then one invokes a bound on the self-intersection of these
   curves in the spirit of \cite{Xu95}. This either leads to the case
   when $C_x^2=0$, hence a multiple of $C_x$ gives a morphism onto
   a curve and we can take the Stein factorization of this morphism, or
   gives restrictions on curves $C_x$ strong enough in order to
   characterize exceptional cases.

   Definition \ref{seshadri slope} generalizes easily to the multi-point case.
\begin{definition}[Multi-point S-slope]\mylabel{multi-s-slope}\rm
   Let $X$ be a smooth projective variety and $L$ a big and nef line bundle
   on $X$. We define the
   \emph{multi-point S-slope} of $L$ as
   $$\sigma(X,L;r):=\frac{\eps(X,L;r)}{\sqrt[n]{L^n/r}}\;.$$
\end{definition}
   The results presented in \cite{SyzSze07} and \cite{SyzSze08} may be summarized
   in the following multi-point counterpart of Theorem \ref{sslopesingle}.
\begin{theorem}\label{sslopemulti}
   Let $X$ be a smooth surface and $L$ an ample line bundle on $X$.
   Let $r\geq 2$ be an integer. If
   $$\sigma(X,L;r)<\sqrt{\frac{2r-1}{2r}}\;,$$
   then
   \begin{itemize}
      \item[(a)] either $X$ is fibred by Seshadri curves or
      \item[(b)] $X$ is a surface of minimal degree in $\P^r$ with $L=\calo_X(1)$ and
      $\sigma(X,L;r)=\sqrt{\frac{r-1}{r}}$ in this case.
   \end{itemize}
\end{theorem}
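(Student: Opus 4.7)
The plan is to imitate the strategy sketched after Theorem \ref{sslopesingle}, now tracking $r$ moving base points simultaneously in place of one. Since $\sigma(X,L;r) < \sqrt{(2r-1)/(2r)} < 1$, the multi-point Seshadri constant $\eps(X,L;r)$ is strictly submaximal, so for a very general $r$-tuple $(P_1,\dots,P_r)$ there is an irreducible Seshadri curve $C$ through the $P_i$ (the multi-point analogue of Remark \ref{sub-sur-rat} via the techniques of \cite{HR08}). Letting the tuple vary in an open subset of $X^r$ and passing to a component of the Hilbert scheme, one obtains a covering irreducible algebraic family $\{C_t\}_{t\in T}$ of Seshadri curves, with $\dim T\geq r$ in the generic regime where the base points vary independently.

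For a general member $C$ of this family, write $m_i=\mult_{P_i}C$, so that $L\cdot C = \eps(X,L;r)\sum_i m_i$. The Hodge index theorem together with the hypothesis yields
\[
   L^2\cdot C^2 \;\leq\; (L\cdot C)^2 \;<\; \frac{2r-1}{2r^2}\,L^2\Bigl(\sum_i m_i\Bigr)^2,
\]
so $C^2 < (2r-1)(\sum_i m_i)^2/(2r^2)$. Against this I would deploy the multi-point Xu-type self-intersection bound developed in \cite{SyzSze07, SyzSze08, KSS}, which for families of curves through $r$ independently moving base points takes the form $C^2 \geq \sum_i m_i^2 - \max_i m_i$. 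Combining the two inequalities with Cauchy--Schwarz $(\sum_i m_i)^2 \leq r\sum_i m_i^2$, and exploiting the $S_r$-symmetry of the family to reduce to the equimultiplicity case $m_1=\dots=m_r=m$, one obtains $rm^2 - m < (2r-1)m^2/2$, i.e., $m^2 < 2m$, forcing $m=1$.

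With $m_i=1$ for every $i$ we have $\sum_i m_i = r$, and the two inequalities collapse to $r-1 \leq C^2 < r-\tfrac{1}{2}$, hence $C^2 = r-1$; equality is essentially forced in Hodge index, so $C$ is numerically proportional to $L$. Two cases remain. If the family $\{C_t\}$ has strictly lower dimension than $r$ — so the Seshadri curves are actually governed by fewer parameters than naively expected — then applying Stein factorization to the induced fibering map produces the desired fibration by Seshadri curves of case (a). Otherwise $\dim T = r$, the linear system $|C|$ has dimension at least $r$ and separates $r$ general points; together with $C^2 = r-1$ this forces a birational morphism $X\to\P^r$ whose image has degree $r-1$, i.e.\ a surface of minimal degree, with $L=\calo_X(1)$. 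A direct computation then yields $\sigma(X,L;r) = \sqrt{(r-1)/r}$, which is case (b).

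The main obstacle is the central estimate in the second paragraph: establishing the multi-point Xu-type inequality with the sharp error term $-\max_i m_i$ for families of Seshadri curves through $r$ \emph{independently} varying base points, and justifying the $S_r$-symmetry step rigorously enough to conclude $m_i=1$ for every $i$ rather than merely on average. This is precisely the technical input supplied by \cite{SyzSze07, SyzSze08, KSS}. The subsequent dichotomy between fibration and minimal-degree embedding is then a relatively clean piece of Hilbert-scheme and linear-system bookkeeping.
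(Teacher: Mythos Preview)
The paper does not give a proof of this theorem; it states it as a summary of results from \cite{SyzSze07} and \cite{SyzSze08}. So the only benchmark available in the paper is the strategy sketch following Theorem~\ref{sslopesingle}, which you correctly take as your model. Your overall architecture---covering family of Seshadri curves, Hodge index upper bound on $C^2$, Xu-type lower bound on $C^2$, then squeeze---matches that sketch, and your numerics forcing $m=1$ and $C^2=r-1$ under the full-strength Xu inequality are correct.

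However, the argument has a genuine structural gap in how case~(a) arises. You apply the Xu-type bound $C^2\geq\sum m_i^2-\max_i m_i$ \emph{before} splitting into cases, but that bound presupposes $r$ independently moving base points, i.e.\ already $\dim T\geq r$. So when you afterwards say ``if $\dim T<r$ then Stein factorization gives a fibration'', you are in a case where your earlier derivation of $C^2=r-1$ was not valid in the first place---and more importantly, a family of dimension strictly between $1$ and $r$ does not produce any ``induced fibering map'' onto a curve. In the single-point sketch the fibration comes from $C_x^2=0$, not from a dimension drop in the parameter space. In the multi-point setting the fibration case typically enters through a different door altogether: a Seshadri curve for a general $r$-tuple need not pass through \emph{all} of $P_1,\dots,P_r$ (look at Definition~\ref{sesfib}: the fiber is only required to meet the tuple), and indeed a fiber of a map $X\to B$ meets at most one general point. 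Your opening assumption that $C$ passes through every $P_i$ therefore already excludes the fibration case rather than deriving it.

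A smaller but real error: ``equality is essentially forced in Hodge index'' is false. From $r-1\leq C^2\leq (L\cdot C)^2/L^2=\sigma^2 r<r-\tfrac12$ you get $C^2=r-1$, but $\sigma^2 r$ can sit anywhere in $[r-1,r-\tfrac12)$, so $C$ need not be numerically proportional to $L$. The identification $L=\calo_X(1)$ in case~(b) and the value $\sigma=\sqrt{(r-1)/r}$ require additional argument beyond what you have written.
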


\endgroup

%*****************************************************************************
%*****************************************************************************
\begingroup

\section{Algebraic manifestation of Seshadri constants}
   In this section we apply results on Seshadri constants to a problem
   of commutative algebra concerning comparisons of powers of a
   homogeneous ideal in a~polynomial ring with symbolic powers of the
   same ideal.

   To begin, let $R=k[x_0,\ldots,x_N]$ be a polynomial ring in $N+1$
   indeterminates $x_i$ over an algebraically closed field $k$ of
   arbitrary characteristic. We will often regard $R$ as the
   homogeneous coordinate ring $R=k[\P^N]$ of projective $N$-space
   over $k$.

\setcounter{theorem}{0}
\renewcommand{\thetheorem}{\thesubsection.\arabic{theorem}}
\renewcommand{\theequation}{\thesubsection.\arabic{theorem}}

\subsection{Symbolic powers, ordinary powers and the containment problem}

   Let $I\subseteq R$ be a \emph{homogeneous\/} ideal, meaning
   $I=\oplus_i I_i$, where the homogeneous component $I_i$ of $I$ of
   degree $i$ is the $k$-vector space span of all forms $F\in I$ of
   degree $i$.
\begin{definition}[Symbolic power]\mylabel{sympow}\rm
   Given an integer $m\ge1$, the $m$th \emph{symbolic
   power\/} $I^{(m)}$ of $I$ is the ideal
   $$I^{(m)}=\cap_{P\in \Ass(I)}(R\cap I^mR_P)\;.$$
Equivalently,
$$I^{(m)}=R\cap I^mR_U,$$
where $R_U$ is the localization with respect to the set
$U=R-\cup_{P\in\Ass(I)}P$.

\end{definition}
\begin{remark}[Homogeneous primary decomposition]\rm
   All associa\-ted pri\-mes of a~homogeneous ideal are
   themselves homogeneous, and the primary components of a homogeneous
   ideal, meaning the ideals in a primary decomposition, can always be
   taken to be homogeneous (see p. 212, \cite{refBHAbh}). Such a primary decomposition is said to be
   a \emph{homogeneous primary decomposition}; when we refer to
   a~primary decomposition of a homogeneous ideal, we will always mean a
   homogeneous primary decomposition. With this convention, given a
   primary decomposition $I^m=\cap_{P\in{\Ass}(I^m)} Q_P$, where for
   each associated prime $P$ of $I^m$, $Q_P$ denotes the primary
   component of $I^m$ corresponding to $P$, the symbolic power
   $I^{(m)}$ is just $\cap_{P\in S}Q_P$, where $S$ is the set of
   $P\in{\Ass}(I^m)$ such that $Q_P$ is contained in some associated
   prime of $I$.
\end{remark}

\begin{examples}[Some symbolic power examples]\label{sympowersexamples}\rm
Let $I\subseteq R$ be a homogeneous ideal.
By the definition it follows that $I^m\subseteq I^{(m)}$ for all $m\ge 1$,
and by \cite[Proposition 4.9]{BHrefAM} we have $I=I^{(1)}$, but it can happen
that $I^m\subsetneq I^{(m)}$ when $m>1$; see Example \ref{BHfatpts}.
However, by a result of Macaulay, if $I$ is a complete intersection, then $I^{(m)}=I^m$ for all $m\ge1$
(see the proof of Theorem 32 (2), p. 110, \cite{Mat70}).
If $I$ is a radical homogeneous ideal with associated primes
$P_1,\dots,P_j$, then $I=P_1\cap \cdots\cap P_j$ and
$I^{(m)}=P_1^{(m)}\cap \cdots\cap P_j^{(m)}$, where
$P_i^{(m)}$ is the smallest primary ideal containing $P_i^m$.
Thus for an ideal $I=\cap_i P_i$ of a finite set of points $p_1,\dots,p_j\in \P^N$,
where $P_i$ is the ideal generated by all forms vanishing at $p_i$, we have
$I^{(m)}=P_1^m\cap \cdots\cap P_j^m$.
\end{examples}
The problem we wish to address here is that of comparing powers of an ideal $I$
with symbolic powers of $I$. The question of when $I^{(m)}$ contains $I^r$
has an easy complete answer.

\begin{lemma}[Containment condition]\mylabel{concon}
   Let $0\ne I\subsetneq R$ be a homogeneous ideal. Then
   $I^r\subseteq I^{(m)}$ if and only if $r\ge m$.
\end{lemma}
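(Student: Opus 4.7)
The ``if'' direction is immediate. If $r\ge m$, then $I^r = I^{r-m}\cdot I^m \subseteq I^m$, and $I^m\subseteq I^{(m)}$ follows from Definition \ref{sympow} (for any $P\in \Ass(I)$ we have $I^m \subseteq R\cap I^m R_P$).

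For the ``only if'' direction I argue by contrapositive: assuming $0\le r<m$, I will show $I^r\not\subseteq I^{(m)}$. The case $r=0$ is handled by observing that $I^{(m)}$ is proper, since $R\cap I^m R_P \subseteq R\cap PR_P = P$ for any $P\in\Ass(I)$ implies $I^{(m)}\subseteq P\subsetneq R = I^0$. So I may assume $1\le r<m$. The key step is the identity $I^{(m)}R_P = I^m R_P$ for every $P\in\Ass(I)$: the inclusion $\subseteq$ follows by localizing the defining intersection (the $P$-term gives $I^{(m)}\subseteq R\cap I^m R_P$, hence $I^{(m)}R_P\subseteq I^m R_P$), and the reverse inclusion comes from $I^m\subseteq I^{(m)}$.

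Now fix $P\in\Ass(I)$ and set $J=IR_P$. Every associated prime contains $I$, so $J\subseteq PR_P$, i.e., $J$ is contained in the maximal ideal of the Noetherian local ring $R_P$. Assuming $I^r\subseteq I^{(m)}$ and localizing, I get $J^r\subseteq J^m$. Since $m-r\ge 1$, the factorization $J^m=J^{m-r}\cdot J^r\subseteq (PR_P)\cdot J^r$ yields $J^r\subseteq (PR_P)\cdot J^r$, so Nakayama's lemma applied to the finitely generated module $J^r$ forces $J^r=0$. Because $R_P$ is a domain (as a localization of the polynomial ring $R$), this gives $J=IR_P=0$. But the map $R\hookrightarrow R_P$ is injective (again because $R$ is a domain), contradicting $I\ne 0$.

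The one step that requires any care is the identity $I^{(m)}R_P=I^m R_P$ for $P\in\Ass(I)$; everything else reduces to a short Nakayama-plus-domain argument, so I do not anticipate a real obstacle.
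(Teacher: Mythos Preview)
Your proof is correct and follows essentially the same approach as the paper: both localize at an associated prime $P$ of $I$, use that $I^{(m)}R_P=I^mR_P$, and apply Nakayama's lemma to $I^rR_P$ to force $IR_P=0$, contradicting $I\ne 0$. Your version is in fact slightly more streamlined, since you only need the containment $J^r\subseteq J^m$ (rather than the paper's intermediate equality $I^{(r)}=I^{(m)}$) before invoking Nakayama, and you treat the edge case $r=0$ explicitly.
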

\proof
   Clearly, $r\ge m$ implies $I^r\subseteq I^m \subseteq I^{(m)}$.

   Conversely, say $r<m$ but $I^r\subseteq I^{(m)}$.
   Since  $I^r\subseteq I^{(m)}$, we have $I^{(r)}\subseteq I^{(m)}$,
   and since $r<m$ we have $I^m\subseteq I^r$, so
   $I^{(m)}\subseteq I^{(r)}$ and hence $I^{(r)}=I^{(m)}$. Thus there
   is an associated prime $P$ of $I$ such that $I^rR_P=I^mR_P\neq(1)$
   and so $I^rR_P=I^mR_P=(I^rR_P)(I^sR_P)$, where $s+r=m$. By
   Nakayama's lemma, this implies $I^rR_P=0$, contradicting $0\ne I$.
\endproof
\noindent The question, on the other hand, of when $I^r$ contains $I^{(m)}$
turns out to be very delicate. This is the main problem we will consider here.
\begin{problem}[Open Problem]\rm
Let $I\subseteq R$ be a homogeneous ideal.
Determine for which $r$ and $m$ we have $I^{(m)}\subseteq I^r$
\end{problem}
In order to make a connection of this problem to computing Seshadri constants
we will need the following definition.
Let $M=(x_0,\ldots,x_N)$ be the maximal homogeneous ideal of $R$.

\begin{definition}[$M$-adic order of an ideal]\label{madord}\rm
   Given a homogeneous ideal $0\ne I\subseteq R$, let
   $\alpha(I)$ be the $M$-adic order of $I$; i.e., the least $t$ such that $I$ contains a~nonzero
   homogeneous element of degree $t$; equivalently, $\alpha(I)$ is the
   least $t$ such that $I_t\ne 0$.
\end{definition}
   For any homogeneous ideal $0\ne I\subseteq R$, it is easy to see
   that $\alpha(I^m)=m\alpha(I)$, but for symbolic powers we have just
   $\alpha(I^{(m)})\le m\alpha(I)$; as Example \ref{BHfatpts}
   shows, this inequality can be strict. First a definition.
\begin{definition}[Fat point subscheme]\mylabel{fatptsub}\rm
   Given distinct points $p_1,\ldots,p_j\in \P^N$, let $I(p_i)$ be
   the maximal ideal of the point $p_i$. Given a
   $0$-cycle $Z=m_1p_1+\cdots+m_jp_j$ with positive integers $m_i$, let
   $I(Z)$ denote the ideal $\cap_i I(p_i)^{m_i}$. We also write
   $Z=m_1p_1+\cdots+m_jp_j$ to denote the subscheme defined by $I(Z)$.
   Such a~subscheme is called a \emph{fat point\/} subscheme.
\end{definition}
   Now we consider an easy example of a fat point subscheme of $\P^2$.
\begin{example}[The power and symbolic power can differ]\mylabel{BHfatpts}\rm
   Given
   $Z=p_1+\cdots+p_j$ and $m\ge1$, $mZ$ is the subscheme
   $mp_1+\cdots+mp_j$, and we have $I(mZ)=I(Z)^{(m)}$. The ideal
   $I(mZ)$ is generated by all forms that vanish to order at least $m$
   at each point $p_i$. If $N=2$ and $I=I(p_1+p_2+p_3)$, where
   $p_1=(1:0:0)$, $p_2=(0:1:0)$ and $p_3=(0:0:1)$, then $\alpha(I)=2$
   so $\alpha(I^2)=4$ but, since $x_0x_1x_2\in I^{(2)}$, we have
   $\alpha(I^{(2)})\le3$ (and in fact this is an equality), and thus
   $\alpha(I^{(2)})<2\alpha(I)=4$, hence
   $I^2\subsetneq I^{(2)}$.
\end{example}

\setcounter{theorem}{0}
\renewcommand{\thetheorem}{\thesubsection.\arabic{theorem}}
\renewcommand{\theequation}{\thesubsection.\arabic{theorem}}

\subsection{Measurement of growth and Seshadri constants}

   An interesting problem, pursued in \cite{BHrefAV} and
   \cite{BHrefCHHT}, is to determine how much bigger $I^{(m)}$ is than
   $I^m$. Whereas \cite{BHrefCHHT} uses local cohomology to obtain an
   asymptotic measure of $I^{(m)}/I^m$, \cite{BHrefAV} uses the
   regularity of $I$ to estimate how big $I^{(m)}/I^m$ is. An
   alternative approach is to use an asymptotic version of
   $\alpha$ \cite{BHrefBH}.
\begin{definition}[Asymptotic $M$-adic order]\mylabel{asmadord}\rm
   Given a
   homogeneous ideal $0\ne I\subseteq R$, then $\alpha(I^{(m)})$ is
   defined for all $m\ge 1$ and we define
   $$\gamma(I)=\lim_{m\to\infty}\frac{\alpha(I^{(m)})}{m}\;.
   $$
\end{definition}
   Because $\alpha$ is subadditive (i.e.,
$\alpha(I^{(m_1+m_2)})\le \alpha(I^{(m_1)})+\alpha(I^{(m_2)})$), this limit
   exists (see \cite[Lemma 2.3.1]{BHrefBH}, or
   \cite[Remark III.7]{BHrefHR}).
\begin{lemma}[Positivity of $\gamma$]
   Given a
   homogeneous ideal $0\ne I\subsetneq R$, then
   $\gamma(I)\ge1$.
\end{lemma}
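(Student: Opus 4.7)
The plan is to prove the stronger statement $\alpha(I^{(m)}) \geq m$ for every $m \geq 1$, which is equivalent to $\gamma(I) \geq 1$ because, by the subadditivity of $\alpha$ on symbolic powers already noted, Fekete's lemma gives $\gamma(I) = \inf_m \alpha(I^{(m)})/m$. Thus the asymptotic inequality becomes a uniform bound on a single symbolic power, and I fix $m \geq 1$ for the rest of the argument.

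My first reduction is to pass from $I$ to one of its associated primes. Since $I \subsetneq R$, there exists a homogeneous prime $P \in \Ass(I)$, and $I \subseteq P$ forces $I^m R_P \subseteq P^m R_P$. Consequently $I^{(m)} \subseteq R \cap I^m R_P \subseteq R \cap P^m R_P = P^{(m)}$, so it suffices to show $\alpha(P^{(m)}) \geq m$ for an arbitrary homogeneous prime $P \subsetneq R$.

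For the final and main step, I take a nonzero homogeneous $f \in P^{(m)}$ and choose a homogeneous $u \notin P$ with $uf \in P^m$ (such $u$ is obtained by extracting from any denominator a homogeneous component lying outside the homogeneous prime $P$). The key is then to transfer the localization at $P$ to the localization at a carefully chosen closed point of $\P^N$: because $u$ does not vanish identically on the irreducible variety $V(P) \subseteq \P^N$ and $k$ is algebraically closed, the open set $V(P) \setminus V(u)$ contains a closed point $q$. Letting $P_q$ denote the homogeneous prime of $q$, one has $P \subseteq P_q$ (since $q \in V(P)$) while $u \notin P_q$ (since $u(q) \neq 0$), so $uf \in P^m \subseteq P_q^m$ combined with $u \notin P_q$ gives $f \in P_q^{(m)}$. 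After a linear change of coordinates, $P_q = (x_1, \ldots, x_N)$ is generated by a regular sequence, so Macaulay's theorem (cited in Examples \ref{sympowersexamples}) yields $P_q^{(m)} = P_q^m \subseteq M^m$; since every nonzero homogeneous element of $M^m$ has degree at least $m$, this forces $\deg f \geq m$ and completes the proof.

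The main obstacle throughout is the bookkeeping of localizations: the definition of $P^{(m)}$ provides only a denominator outside $P$, whereas concluding a degree bound demands a denominator outside the strictly larger prime $P_q$. The geometric device of selecting the closed point $q$ in $V(P) \setminus V(u)$ is precisely what bridges this gap, and the fact that $P_q$ becomes a complete-intersection ideal after a linear change of coordinates is what lets the final degree estimate go through.
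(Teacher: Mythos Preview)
Your proof is correct and follows the same skeleton as the paper's: both reduce to an associated prime $P\in\Ass(I)$, use $I^{(m)}\subseteq P^{(m)}$, and then establish the key containment $P^{(m)}\subseteq M^m$ to conclude $\alpha(I^{(m)})\ge m$. The difference is in how that key containment is obtained. The paper simply invokes the Zariski--Nagata--type result of Eisenbud and Hochster (Corollary~1 of \cite{refBHEH}), which gives $P^{(m)}\subseteq M^m$ for any homogeneous prime $P\subseteq M$ in one stroke. You instead supply a self-contained argument: pick a closed point $q\in V(P)\setminus V(u)$ so that the homogeneous denominator $u$ survives in the larger localization at $P_q$, and then use that the ideal $P_q$ of a point is a complete intersection, whence $P_q^{(m)}=P_q^m\subseteq M^m$ by the Macaulay result already cited in Examples~\ref{sympowersexamples}. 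Your route is more elementary in that it relies only on the Nullstellensatz and Macaulay's theorem rather than the Eisenbud--Hochster paper; the paper's route is correspondingly shorter.

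One small omission: your closed-point step tacitly assumes $V(P)\neq\emptyset$, i.e., $P\neq M$. When $P=M$, the homogeneous element $u\notin M$ produced in your step~3 is necessarily a nonzero constant, so $uf\in M^m$ already gives $f\in M^m$ and the degree bound follows without ever choosing $q$.
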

\proof
   To see this, consider $M=(x_0,\ldots,x_N)$. Let $P\in\Ass(I)$.
Then $I^{(m)}\subseteq P^{(m)}$. But $P$ is homogeneous, so $P\subseteq M$, hence $P^{(m)}\subseteq M^{m}$
by Corollary 1 of \cite{refBHEH}. Thus $m=\alpha(M^m)\le
   \alpha(I^{(m)})$, hence $1\le \gamma(I)$.
\endproof

\begin{remark}[$\gamma$, the containment problem and Seshadri constants]\label{BHscrem}\rm We note that $0\ne I\subsetneq R$
   guarantees that $\alpha(I)$ is defined, and that $\gamma(I)$ is
   defined and nonzero. The quantity $\gamma(I)$ is useful not only for
   studying when $I^m\subsetneq I^{(m)}$ but, as we will see in Lemma \ref{BHlowerboundlemma},
   also for studying when
   $I^{(m)}\subseteq I^r$. We also will relate $\gamma(I)$ to Seshadri constants.

   First we see how $\alpha(I)/\gamma(I)$ gives an asymptotic indication of
   when $I^m\subsetneq I^{(m)}$ in case
   $0\ne I\subsetneq R$. Note by subadditivity we have for all $m$ that
   $$\gamma(I)=\lim_{t\to\infty}\frac{\alpha(I^{(tm)})}{tm}\leq
     \frac{\alpha(I^{(m)t})}{mt}= \frac{\alpha(I^{(m)})}{m}\leq\frac{\alpha(I^{m})}{m}=\alpha(I)\;.$$
   Thus, for example, $\alpha(I)/\gamma(I)>1$ if and only if
   $\alpha(I^m)>\alpha(I^{(m)})$ for some (equivalently, infinitely
   many) $m>1$, and hence $\alpha(I)/\gamma(I)>1$ implies
   $I^m\subsetneq I^{(m)}$ for some (equivalently, infinitely many)
   $m>1$.

   As pointed out in \cite{BHrefBH}, $\gamma(I)$ is in some cases
   related to a suitable Seshadri constant. In particular, if
   $Z=p_1+\cdots+p_j\subset {\bf P}^N$, then one defines the Seshadri
   constant (cf. Definition \ref{defhighdim})
   $$\eps(N,Z):=\eps_{N-1}(\P^N,\calo(1);Z)=\root{N-1}\of {\inf
     \left\{\frac{{\deg(H)}}{{\Sigma_{i=1}^j
     \mult_{p_i}H}}\right\}}\;,
   $$
   where the infimum is taken with
   respect to all hypersurfaces $H$ through at least one of the points
   $p_i$. It is clear from the definitions that
   $$\gamma(I(Z))\ge j\cdot (\varepsilon(N,Z))^{N-1}\;.
   $$
   If the points $p_i$ are generic, then
   equality holds (see \cite[Lemma 2.3.1]{BHrefBH}, or
   \cite[Remark III.7]{BHrefHR}; the idea of the proof is to use the fact that the
   points are generic to show that one can assume that $H$ has the same
   multiplicity at each point $p_i$).
\end{remark}

\setcounter{theorem}{0}
\renewcommand{\thetheorem}{\thesubsection.\arabic{theorem}}
\renewcommand{\theequation}{\thesubsection.\arabic{theorem}}

\subsection{Background for the containment problem}

   It is not so easy to determine for which $r$ we have
   $I^{(m)}\subseteq I^r$. It is this problem that is the motivation
   for \cite{BHrefBH}, which develops an asymptotic approach to this
   problem. If $I$ is nontrivial (i.e., $0\ne I\subsetneq R$), then
   the set $\{m/r: I^{(m)}\not\subseteq I^r\}$ is nonempty, and we define
   $\rho(I)=\sup\{m/r: I^{(m)}\not\subseteq I^r\}$; \emph{a priori\/}
   $\rho(I)$ can be infinite. When an upper bound does exist, we see
   that $I^{(m)}\subseteq I^r$ whenever $m/r>\rho(I)$.

   Swanson \cite{BHrefS} showed that an upper bound exists for many
   ideals $I$. This was the inspiration for the papers \cite{BHrefELS}
   and \cite{BHrefHH}, whose results imply that $\rho(I)\le N$ for any
   nontrivial homogeneous ideal $I\subset \P^N$. In fact, if we define
   $\codim(I)$ to be the maximum height among associated primes
   of $I$ other than $M$, then it follows from \cite{BHrefELS} and
   \cite{BHrefHH} that $\rho(I)\le \min\{N, \codim(I)\}$. (If $M$
   is an associated prime of $I$, as happens if $I$ is not saturated, then
   $I^m=I^{(m)}$ for all $m\ge 1$, since every homogeneous primary
   ideal in $R$ is contained in $M$.)

   This raises the question of whether the bound $\rho(I)\le N$ can be
   improved. Results of \cite{BHrefBH} show that this bound and the
   bounds $\rho(I)\le \hbox{codim}(I)$ are optimal, in the sense that
   $\sup\{\rho(I): 0\ne I\subsetneq R\hbox{ homogeneous}\}=N$, and,
   when $e\le N$,
\addtocounter{theorem}{1}
   \begin{equation}\label{rhoopt}
   \sup\{\rho(I): 0\ne I\subsetneq R\hbox{ homogeneous of }\hbox{codim}(I)=e\}=e\;.
   \end{equation}
   To justify this we introduce the following arrangements
   of linear subspaces.
\begin{notation}[Generic arrangements of linear subspaces]\rm
   Let $H_1,\ldots,H_s$ be $s > N$ generic hyperplanes in $\P^N$.
   Let $1\le e\le N$ and let $S\subset\{1,2,\ldots,s\}$ with $|S|=e$.
   We define the scheme
   $Z_S(N,s,e)$ to be $\cap_{i\in S}H_i$, so $Z_S(N,s,e)$ is a~linear
   subspace of ${\bf P}^N$ of codimension $e$.
We also let $Z(N,s,e)$ be the
   union of all $Z_S(N,s,e)$ with $|S|=e$.
\end{notation}
   The following result,  \cite[Lemma 2.3.2(b)]{BHrefBH}, as applied
   in Example \ref{BHstarconfigs}, justifies \eqnref{rhoopt}:

\begin{lemma}[Asymptotic noncontainment]\label{BHlowerboundlemma}
   Given a
   homogeneous ideal $0\ne I\subsetneq R$ and $m/r<\frac{\alpha(I)}{\gamma(I)}$, then
   $I^{(mt)}\not\subseteq I^{rt}$ for all $t\gg0$; in particular,
   $\frac{\alpha(I)}{\gamma(I)}\le\rho(I)$.
\end{lemma}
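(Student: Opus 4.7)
The plan is to argue by contradiction via the $M$-adic order $\alpha$, exploiting two basic facts: (i) $\alpha$ is multiplicative on ordinary powers, so $\alpha(I^{rt})=rt\cdot\alpha(I)$, and (ii) $\alpha$ is monotonic in the reverse direction of inclusion, i.e., if $J\subseteq K$ are nonzero homogeneous ideals then $\alpha(J)\ge\alpha(K)$ (since any nonzero homogeneous element of $J$ of minimal degree also lies in $K$). Both are elementary and were already noted in the excerpt.

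With these tools in place, I would suppose for contradiction that $I^{(mt)}\subseteq I^{rt}$ for arbitrarily large $t$. Applying (ii) and (i) yields
$$
\alpha(I^{(mt)})\;\ge\;\alpha(I^{rt})\;=\;rt\,\alpha(I),
$$
so that $\alpha(I^{(mt)})/(mt)\ge r\alpha(I)/m$ for those values of $t$. Now the hypothesis $m/r<\alpha(I)/\gamma(I)$ is equivalent to $\gamma(I)<r\alpha(I)/m$. Since the sequence $\alpha(I^{(n)})/n$ converges to $\gamma(I)$ by Definition~\ref{asmadord}, its subsequence $\alpha(I^{(mt)})/(mt)$ also tends to $\gamma(I)$; in particular, for all sufficiently large $t$,
$$
\frac{\alpha(I^{(mt)})}{mt}\;<\;\frac{r\alpha(I)}{m},
$$
contradicting the previous inequality. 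Hence $I^{(mt)}\not\subseteq I^{rt}$ for every $t\gg 0$, as claimed.

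For the ``in particular'' statement, fix any rational $m/r$ strictly less than $\alpha(I)/\gamma(I)$. By what was just proved, one can choose $t$ large enough so that $I^{(mt)}\not\subseteq I^{rt}$, and therefore $mt/rt=m/r$ lies in the set $\{m'/r':I^{(m')}\not\subseteq I^{r'}\}$ whose supremum is $\rho(I)$. Thus $\rho(I)\ge m/r$ for every such rational, and taking the supremum over all rationals $m/r<\alpha(I)/\gamma(I)$ yields $\rho(I)\ge\alpha(I)/\gamma(I)$.

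There is no real obstacle here; the argument is essentially a limiting comparison of two asymptotic slopes, $\alpha(I^{(n)})/n\to\gamma(I)$ versus $\alpha(I^n)/n=\alpha(I)$. The only point that requires a brief comment is the passage from the full sequence defining $\gamma(I)$ to the subsequence indexed by $n=mt$, which of course inherits the same limit. The strict inequality in the hypothesis is what provides the necessary room to convert the asymptotic comparison into a statement about infinitely many specific exponents.
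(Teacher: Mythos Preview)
Your argument is correct. The paper does not actually supply its own proof of this lemma; it merely cites \cite[Lemma~2.3.2(b)]{BHrefBH}. Your approach---showing that for $t\gg0$ one has $\alpha(I^{(mt)})<rt\,\alpha(I)=\alpha(I^{rt})$, and then invoking the obvious fact that a containment $J\subseteq K$ forces $\alpha(J)\ge\alpha(K)$---is exactly the intended one, and indeed the non-containment criterion you use is recorded later in the paper as Theorem~\ref{BHcontainmentcrits}(a).
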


\begin{example}[Sharp examples of \cite{BHrefBH}]\label{BHstarconfigs}\label{BHexamples}\rm
   We write $I(mZ(N,s,e))$ to denote
   $I(Z(N,s,e))^{(m)}$. Then $\alpha(I(mZ(N,s,e)))=ms/e$ if $e|m$ and
   by Lemma \ref{BHnumlem} $\alpha(I(Z(N,s,e)))=s-e+1$. Thus
   $\gamma(I(Z(N,s,e)))=s/e$ and $\rho(I(Z(N,s,e)))\ge e(s-e+1)/s$.
   Keeping in mind $\rho(I(Z(N,s,e)))\le e$ (which holds by \cite{BHrefHH} since
$\hbox{codim}(I(Z(N,s,e)))=e$), we now see $\lim_{s\to
   \infty}\rho(I(Z(N,s,e)))=e$, so the bounds of \cite{BHrefELS} and
   \cite{BHrefHH} are sharp.
\end{example}

\begin{remark}[A Seshadri constant computation]\rm
   Let $I=I(Z(N,s,N))$. It is interesting to note, by
   \cite[Theorem 2.4.3(a)]{BHrefBH}, that $\rho(I)=\alpha(I)/\gamma(I)$, and hence
   $\rho(I)=N(s-N+1)/s$. By an argument similar to that of
   \cite[Lemma 2.3.1]{BHrefBH}, discussed above in Remark \ref{BHscrem},
   we can express $\rho(I)$ in terms of the Seshadri constant $\eps(N,Z)$.
In particular, $\gamma(I)=|Z|\cdot\eps(N,Z)^{N-1}$ holds, and thus we obtain
   $$\eps(N,Z)=\root{N-1}\of{\frac{s}{N\binom{s}{N}}}\;.$$
\end{remark}

\setcounter{theorem}{0}
\renewcommand{\thetheorem}{\thesubsection.\arabic{theorem}}
\renewcommand{\theequation}{\thesubsection.\arabic{theorem}}

\subsection{Conjectural improvements}

   Even though the bound $\rho(I)\le N$ is optimal (in the sense that
   for no value $d$ smaller than $N$ will $\rho(I)\le d$ hold for all
   nontrivial homogeneous ideals $I$), we can try to do better. The
   bound $\rho(I)\le \hbox{codim}(I)$ can be rephrased as saying $I^{(m)}\subseteq
   I^r$ if $m>r\,\hbox{codim}(I)$. In fact the results of \cite{BHrefELS} and
   \cite{BHrefHH} imply the slightly stronger result that
   $I^{(m)}\subseteq I^r$ if $m\ge r\,\hbox{codim}(I)$. As a next step, we can ask for
   the largest {\it integer\/} $d_e$ such that $I^{(m)}\subseteq I^r$
   whenever $m \ge re-d_{e}$, where $e=\hbox{codim}(I)$.

Examples of Takagi and Yoshida \cite{BHrefTY} support the
   possibility that $I^{(m)}\subseteq I^r$
   holds for $m\ge Nr-1$ (i.e., perhaps it is true that $d_e\ge 1$).
   On the other hand, the obvious fact that $\alpha(I^{(m)})<\alpha(I^r)$ implies
  $I^{(m)}\not\subseteq I^r$ (see Theorem \ref{BHcontainmentcrits}(a) for a reference),
  applied with $m=re-e$ for $e>1$ and $s\gg0$ to $I(mZ(N,s,e))$
of  Example \ref{BHexamples}, shows that $d_e<e$.

For example, the fact that $I^{(2)}$ is not always
   contained in $I^2$, as we saw in Example \ref{BHfatpts}, shows that $d_2<2$
   (at least for $I\subseteq R=k[\P^2]$), and hence
   either $d_2=0$ or $d_2=1$. Proving $d_2=1$ for $R=k[\P^2]$ would provide an
   affirmative answer to an as-of-now still open unpublished question
   raised by Craig Huneke:

\begin{question}[Huneke]\label{BHCraigques}
   Let $I=I(Z)$ where $Z=p_1+\cdots+p_j$ for distinct points $p_i\in
   \P^2$. Then we know $I^{(4)}\subseteq I^2$, but is it also true
   that $I^{(3)}\subseteq I^2$?
\end{question}
The following conjectures are motivated by Huneke's question,
by the fact that $d_e<e$ as we saw above, and by
a number of suggestive supporting examples which we will recall below:

\begin{conjecture}[Harbourne]\label{BHPNconj}
   Let $I$ be a homogeneous ideal with $0\ne I\subsetneq R=k[\P^N]$.
   Then $I^{(m)}\subseteq I^r$ if $m\ge Nr-(N-1)$.
\end{conjecture}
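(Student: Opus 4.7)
The plan is to revisit the asymptotic multiplier ideal argument of Ein--Lazarsfeld--Smith \cite{BHrefELS} (and the parallel tight-closure argument of Hochster--Huneke \cite{BHrefHH}), which already yields the weaker containment $I^{(Nr)} \subseteq I^r$, and to try to save $N-1$ units in the symbolic exponent. Schematically, ELS combines two ingredients: (i) an initial containment $I^{(m)} \subseteq \mathcal{J}(\|I^m\|)$ of the symbolic power in an asymptotic multiplier ideal; and (ii) the iterated Skoda theorem $\mathcal{J}(\|I^{k+N}\|) \subseteq I \cdot \mathcal{J}(\|I^{k}\|)$, applied $r$ times, which accounts for $Nr$ of the exponent before landing at $\mathcal{J}(\|I^0\|) = R$. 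The conjectural saving of $N-1$ should come from the fact that the final Skoda round overshoots by nearly a full $N$ units that are never used in the iteration.

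\textbf{Execution.}
First, I would establish the conjecture on test classes where the symbolic/ordinary dictionary is transparent: squarefree monomial ideals (via Newton-polytope descriptions of $I^{(m)} = \bigcap_i P_i^m$), the star-configuration ideals $I(Z(N,s,e))$ of Example~\ref{BHstarconfigs} where both sides can be computed explicitly, and ideals of \emph{generic} reduced points in $\P^N$, where the identity $\gamma(I(Z)) = |Z| \cdot \eps(N,Z)^{N-1}$ from the remark preceding Lemma~\ref{BHlowerboundlemma} converts the question into one about Seshadri constants, accessible via Theorem~\ref{ekllower}. Second, I would attempt to upgrade ingredient (ii) to a sharpened inequality $\mathcal{J}(\|I^{k+N-1}\|) \subseteq I \cdot \mathcal{J}(\|I^{k}\|)$ valid for $k \ge 1$; or, alternatively, to upgrade (i) by one unit using that $M \notin \Ass(I)$ whenever $I$ is saturated, which supplies an extra unit of exceptional vanishing on a log resolution of $\mathrm{Supp}(R/I)$. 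Either upgrade, iterated $r$ times, delivers $I^{(Nr-(N-1))} \subseteq I^r$.

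\textbf{Main obstacle.}
The crux is justifying one of these two upgrades. Classical Skoda requires the exponent to strictly exceed the number of local generators of $I$, which at an embedded associated prime can be the full $N$, so extracting a copy of $I$ out of only $N-1$ additional powers is precisely at the edge of what the method can deliver. I expect that neither upgrade holds for every homogeneous ideal in full generality, so the realistic plan is to prove the sharpened Skoda inequality under a structural hypothesis (say $I$ radical with $\mathrm{Supp}(R/I)$ smooth, or $I$ having the expected number of generators) and then remove the hypothesis by a degeneration or specialization argument. The test cases in the first step should dictate both the right general hypothesis and whether the bound $Nr-(N-1)$ is genuinely tight in every codimension; in particular the case $N=2$, $r=2$ (Huneke's Question~\ref{BHCraigques}) is the natural base of a potential induction on $r$.
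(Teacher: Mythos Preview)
The statement you are attempting to prove is a \emph{Conjecture} in the paper, and the paper does \emph{not} provide a proof. What the paper offers is supporting evidence: the containment is verified for radical ideals of points when $r$ is a power of the characteristic (Example~\ref{CraigProof}), for arbitrary monomial ideals (Example~\ref{MonomialIdealProof}), for the star configurations $I(Z(N,s,N))$ (Example~\ref{BHPNstarconfig}), and for ideals of sufficiently many generic points in $\P^N$ (Examples~\ref{BHgenP2ex} and~\ref{BHgenptsinPN}). Your first step, establishing the conjecture on test classes, largely coincides with this evidence; in particular your proposed use of the identity $\gamma(I(Z))=|Z|\cdot\eps(N,Z)^{N-1}$ together with Seshadri bounds is exactly the mechanism behind Examples~\ref{BHgenP2ex} and~\ref{BHgenptsinPN}.

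Your second step, however, cannot succeed. The hoped-for sharpening of Skoda, or the extra unit of vanishing from saturation, is not a mere technicality at the edge of the method: the conjecture itself is now known to be \emph{false} in general. Dumnicki, Szemberg and Tutaj-Gasi\'nska (2013) produced a radical ideal $I$ of twelve points in $\P^2$ (the triple points of the dual Hesse arrangement) with $I^{(3)}\not\subseteq I^2$, giving a negative answer to Huneke's Question~\ref{BHCraigques} and disproving Conjecture~\ref{BHPNconj} already in the base case $N=r=2$. Thus any argument that purports to handle all homogeneous ideals must contain an error, and the specialization/degeneration scheme you outline at the end would, if it worked, contradict this counterexample. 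The correct takeaway is that the conjecture holds for the structured classes in your first step (and the paper's examples) but fails in general; the genuinely open problem is to delineate the class of ideals for which $I^{(Nr-N+1)}\subseteq I^r$ does hold.
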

Since $Nr-(N-1)\geq er-(e-1)$ for any positive integers $e\leq N$,
the previous conjecture is a consequence of the following more precise
version of the conjecture:

\begin{conjecture}[Harbourne]\label{BHPNcodimconj}
    Let $I$ be a homogeneous ideal with
    $0\ne I\subsetneq R=k[\P^N]$ and $\codim(I)=e$.
    Then $I^{(m)}\subseteq I^r$ if $m\ge er-(e-1)$.
\end{conjecture}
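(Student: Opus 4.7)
The plan is to refine the Ein-Lazarsfeld-Smith/Hochster-Huneke theorem, which already yields $I^{(m)}\subseteq I^r$ whenever $m\ge er$, by shaving off the $(e-1)$ slack that the target inequality $m\ge er-(e-1)$ would afford. First I would reduce to a single primary component: since symbolic powers distribute over intersections of homogeneous primary components, and since the containment is automatic for primary components of the irrelevant ideal $M$, it suffices to establish $P^{(er-(e-1))}\subseteq P^r$ for each associated prime $P$ of $I$ of height $\le e$.

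Next I would work in positive characteristic $p$ and sharpen the Hochster-Huneke tight-closure argument. Their proof produces a test element $c$ satisfying $c\cdot I^{(erq)}\subseteq I^{[rq]}$ for every Frobenius power $q=p^s$, after which $c$ is absorbed in the tight-closure limit. The target is to push this to $c\cdot I^{(erq-(e-1)q)}\subseteq I^{[rq]}$, exploiting a Skoda-type estimate: in a regular local ring of dimension $e$, an $e$-generated reduction of an ideal $J$ satisfies $\overline{J^{n}}\subseteq J^{n-e+1}$ for $n\ge e$. Applied locally at each associated prime and then passed to the Frobenius limit $q\to\infty$, this would yield the desired containment. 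In characteristic zero the parallel strategy would proceed through asymptotic multiplier ideals, strengthening the subadditivity argument of Ein-Lazarsfeld-Smith with a Nadel-type vanishing on a log resolution of $I$ in order to replace the clean ``$m\ge er$'' of their proof with the refined ``$m\ge er-(e-1)$''.

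The main obstacle, I expect, is precisely gaining this last $(e-1)$ uniformly in the ideal. The smallest open instance---Huneke's Question~\ref{BHCraigques}, whether $I^{(3)}\subseteq I^2$ for an ideal of distinct points in $\P^2$, where $e=r=2$---already defeats the Skoda approach, since there is no recursion on $r$ or $e$ to exploit and the Brian\c{c}on-Skoda saving is exactly matched against the deficit one wishes to erase. A successful proof likely requires either a genuinely new test-element construction in characteristic $p$, or a vanishing theorem finer than Nadel in characteristic zero; that neither is yet available is precisely why the conjecture remains open at its smallest nontrivial case. For partial evidence one could first try to verify the conjecture in the Seshadri-theoretic framework of \S7.2: by Lemma~\ref{BHlowerboundlemma}, a sufficiently sharp lower bound on $\gamma(I)/\alpha(I)$ for generic arrangements forces $\rho(I)\le e-(e-1)/r$, which matches the conjecture numerically in the examples of~\ref{BHexamples}.
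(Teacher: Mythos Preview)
The statement you are attempting to prove is a \emph{conjecture} in the paper, not a theorem; the paper offers no proof, only supporting evidence (Huneke's characteristic-$p$ argument in Example~\ref{CraigProof}, monomial ideals in Example~\ref{MonomialIdealProof}, the schemes $Z(N,s,N)$ in Example~\ref{BHPNstarconfig}, and generic points in Examples~\ref{BHgenP2ex} and~\ref{BHgenptsinPN}). So there is no paper proof to compare against, and your proposal is not a proof either---as you yourself concede in the third paragraph, the smallest open case (Huneke's Question~\ref{BHCraigques}) already defeats the Skoda approach, and neither the sharper test element nor the finer vanishing theorem you describe is known to exist.

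Beyond this, your opening reduction is wrong. You claim it suffices to show $P^{(er-(e-1))}\subseteq P^r$ for each associated prime $P$ of $I$, on the grounds that symbolic powers distribute over intersections. But while $I^{(m)}=\bigcap_i P_i^{(m)}$ does hold for a radical ideal $I=\bigcap_i P_i$, the conclusion you need is containment in $I^r=(\bigcap_i P_i)^r$, and $\bigcap_i P_i^r$ is in general strictly larger than $(\bigcap_i P_i)^r$. So even granting $P_i^{(m)}\subseteq P_i^r$ for every $i$, you only obtain $I^{(m)}\subseteq\bigcap_i P_i^r$, which does not give $I^{(m)}\subseteq I^r$. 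This is precisely why the arguments in the paper that do work (Examples~\ref{CraigProof} and~\ref{MonomialIdealProof}) pass instead through Frobenius or bracket powers $I^{[r]}$, for which intersection \emph{does} commute, and then use $I^{[r]}\subseteq I^r$.
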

We conclude by recalling evidence in support of these conjectures.

\begin{example}[Examples of Huneke]\label{CraigProof}\rm
After receiving communication of these conjectures,
Huneke re-examined the methods of
\cite{BHrefHH} and noticed that they
implied that Question \ref{BHCraigques} has an affirmative answer in
characteristic 2. More generally, Conjecture \ref{BHPNconj}
is true if $r=p^t$ for $t>0$, where $p={\rm char}(k)>0$ and $I$ is the radical
ideal defining a set of points $p_1,\dots,p_j\in \P^N$.
Huneke's argument uses the fact that in characteristic $p$ taking a Frobenius power $J^{[r]}$
of an ideal $J$ (defined as the ideal $J^{[r]}$ generated by the $r$th powers of elements of $J$)
commutes with intersection. (To see this, note that $J^{[r]}=J\otimes_RS$, where
$\phi^t:R\to R=S$ is the $t$th power of the Frobenius homomorphism.
Tensoring $0\to J_1\cap J_2\to J_1\oplus J_2\to J_1+J_2\to 0$ by $S$ over $R$
gives a short exact sequence. This is because of flatness of Frobenius; see, for example,
 \cite[Lemma 13.1.3, p. 247]{BHrefHS} and \cite{BHrefKunz}.
Comparing the resulting short exact sequence with
$0\to J_1^{[r]}\cap J_2^{[r]}\to J_1^{[r]}\oplus J_2^{[r]}\to J_1^{[r]}+J_2^{[r]}\to 0$ gives
the result.) It also uses the observation that
a large enough power of any ideal $J$ is contained in a given Frobenius power of $J$.
More precisely, if $J$ is generated by $h$ elements, then
$J^m\subseteq J^{[r]}$ as long as $m\ge rh-h+1$.
This is because $J^m$ is generated by monomials
in the $h$ generators, but in every monomial involving a product of at least
$rh-h+1$ of the generators there occurs a factor consisting of
one of the generators raised to the power $r$.

\eatit{(We explain in more detail the commutativity of Frobenius powers
with intersections. Given an ideal $J\subseteq R$,
where ${\rm char}(k)=p>0$, define the Frobenius power $J^{[p^t]}$ to be the ideal generated by
$\phi^t(J)$, where $\phi:R\to R=S$ is the Frobenius map, defined by
$\phi(a)=a^p$. Note $J^{[p^t]}\cong J\otimes_{\phi^t}S$ in $S$ since $\phi$ is flat.
See \cite[Lemma 13.1.3, p. 247]{BHrefHS}, but note that the proof applies to $R$
even though $R$ is not local in this case; see also \cite{BHrefKunz}.
Given ideals $J_1,J_2\subseteq R$, we clearly have an exact sequence
$0\to J_1\cap J_2\to J_1\oplus J_2\to J_1+J_2\to 0$ and hence
$0\to J_1^{[p^t]}\cap J_2^{[p^t]}\to J_1^{[p^t]}\oplus J_2^{[p^t]}\to J_1^{[p^t]}+J_2^{[p^t]}\to 0$.
Tensoring the former by $S$ over $R$ gives the exact sequence
$0\to (J_1\cap J_2)^{[p^t]}\to J_1^{[p^t]}\oplus J_2^{[p^t]}\to J_1^{[p^t]}+J_2^{[p^t]}\to 0$,
and hence $(J_1\cap J_2)^{[p^t]}=J_1^{[p^t]}\cap J_2^{[p^t]}$.)}

In particular, since ideals of points in $\P^N$ are generated by $N$ elements,
following the notation of Example \ref{sympowersexamples} (and keeping in mind that
$r$ must be a power of $p$ here) we have
$$I^{(rN-N+1)}=\cap_i P_i^{rN-N+1}\subseteq
\cap_i P_i^{[r]} = (\cap_i P_i)^{[r]} = I^{[r]}\subseteq I^r.$$
Huneke's argument also applies more generally to show that
Conjecture \ref{BHPNcodimconj} is true for any radical ideal $I$
when $r$ is a power of the characteristic, using the fact that
Frobenius powers commute with colons (see \cite[Proof of part (6) of Theorem 13.1.2, p. 247]{BHrefHS})
and using the fact that $PR_P$ is generated by $h$ elements, where $h$ is the height of the prime $P$.
\end{example}

\begin{example}[Monomial ideals]\label{MonomialIdealProof}\rm
As another example, we now show that Conjecture \ref{BHPNcodimconj} holds for
any monomial ideal $I\subset R$ in any characteristic. We sketch the proof, leaving
basic facts about monomial ideals as exercises.

Consider a monomial ideal $I$; let $P_1,\dots,P_s$ be the
associated primes. These primes are necessarily
monomial ideals and hence are generated by subsets of the variables.
Moreover, $I$ has a primary decomposition $I=\cap_{ij}Q_{ij}$
where the $P_i$-primary component of $I$
is $\cap_jQ_{ij}$ and each $Q_{ij}$ is generated by positive powers of
the variables which generate $P_i$. Let $e$ be the maximum of the heights of $P_i$
and let $m\ge er-r+1$. By definition we then have
$I^{(m)}= \cap_i(I^mR_{P_i}\cap R)$, but
$\cap_i(I^mR_{P_i}\cap R)\subseteq \cap_i((\cap_jQ_{ij})^mR_{P_i}\cap R)$
since
$$I^mR_{P_i}=(\cap_{\{t:P_t\subseteq P_i\}}(\cap_jQ_{tj}))^mR_{P_i}.$$
Clearly, $(\cap_jQ_{ij})^mR_{P_i}\subseteq \cap_jQ_{ij}^mR_{P_i}$
but $Q_{ij}^m$ is $P_i$-primary (hence $Q_{ij}^mR_{P_i}\cap R=Q_{ij}^m$),
so we have
$$\cap_i((\cap_jQ_{ij})^mR_{P_i}\cap R)\subseteq\cap_{ij}Q_{ij}^m.$$
Now, each $Q_{ij}$ is generated by at most $e$ elements, so
$Q_{ij}^m\subseteq Q_{ij}^{[r]}$, where
$J^{[r]}$ is defined for any monomial ideal $J$ to be the ideal generated
by the $r$th powers of the monomials contained in $J$;
thus $\cap_{ij}Q_{ij}^m\subseteq \cap_{ij}\,Q_{ij}^{[r]}$.
Finally, we note that if $J_1$ and $J_2$ are monomial ideals, then
$(J_1\cap J_2)^{[r]}=J_1^{[r]}\cap J_2^{[r]}$ (since $(J_1\cap J_2)^{[r]}$
is generated by the $r$th powers of the least common multiples of the generators of
$J_1$ and $J_2$, while $J_1^{[r]}\cap J_2^{[r]}$ is generated by the least common
multiples of the $r$th powers of the generators of
$J_1$ and $J_2$, and taking $r$th powers commutes with taking
least common multiples).
So we have $\cap_{ij}\,Q_{ij}^{[r]}=(\cap_{ij}Q_{ij})^{[r]}= I^{[r]}\subseteq I^r$,
and we conclude that $I^{(m)}\subseteq I^r$.

\eatit{We now give the argument in more detail.
Define $I^{[r]}$ to be the ideal generated by the $r$th powers of monomial elements in $I$.
The associated primes of $I$ are monomial ideals. A monomial ideal $P$ is prime
if and only if $P$ is generated by a subset of the variables $x_i$, where $R=k[\P^N]=k[x_0,\dots,x_N]$.
Given a monomial prime $P$, a monomial ideal $Q$ is $P$-primary if and only if
$P=\sqrt{Q}$ and $Q$ is generated by monomials in the variables which generate $P$.
(This implies that $Q^m$ is $P$-primary for all $m\ge1$ if $Q$ is $P$-primary.)
Moreover, a monomial ideal $Q$ is primary for a
monomial prime $P$ if and only if $Q$ is an intersection of finitely many monomial ideals
each of which is generated by positive powers of the variables which generate $P$.
(To see this, first note that any such intersection is $P$-primary. Conversely, assume $Q$
is $P$-primary. If $z=fg\in Q$ where $f$ and $g$ are monomials and no variable in $P$
divides $g$, then the fact that $Q$ is $P$-primary implies that $f\in Q$. Thus if
$z$ is a monomial not in $Q$ and if we write $z=fg$ where
$f=x_{i_1}^{m_{i_1}}\cdots x_{i_j}^{m_{i_j}}$ (where $m_{i_j}\ge0$ and
the variables $x_{i_j}$ generate $P$) and $g$ is a monomial
divisible by no variable in $P$, then the ideal
$Q_f=(x_{i_1}^{1+m_{i_1}},\dots, x_{i_j}^{1+m_{i_j}})$ is $P$-primary and
contains $Q$ but excludes $z$. Since there are only finitely many monomials
of the form $x_{i_1}^{m_{i_1}}\cdots x_{i_j}^{m_{i_j}}$ which are not in $Q$
(due to the fact that $Q$ contains some power of $P$), $Q$ is the intersection of finitely many
$P$-primary ideals, each of which is generated by positive powers of the
variables which generate $P$.)
In particular, $Q$ is an intersection of finitely many monomial ideals
each of which has $h$ generators, where $h$ is the height of $P$.}
\end{example}
The schemes $Z(N,s,N)\subset \P^N$ give additional examples for which Conjecture \ref{BHPNconj} is true.
   These schemes are of particular interest, since, as we saw above, they are
   asymptotically extremal for $\rho$, and thus one might expect
  if the conjecture were false that one of these schemes would provide a
   counterexample.
   In order to see why Conjecture \ref{BHPNconj} is true for symbolic powers of $I(Z(N,s,N))$, we need the
   following theorem, for which we recall the
   notion of the \emph{regularity\/} of an ideal. We need it
   only in a special case:
\begin{quote}
   If $I$ defines a 0-dimensional subscheme of
   projective space, the regularity $\reg(I)$ of $I$ equals the
   least $t$ such that $(R/I)_t$ and $(R/I)_{t-1}$ have the same
   $k$-vector space dimension.
\end{quote}
   As an example, if $I=I(p_1+\cdots+p_j)$
   for distinct generic points $p_i$, then (since the points impose
   independent conditions on forms of degree $i$ as long as $\dim R_i\ge j$) we have
   $\reg(I)=t+1$, where $t$ is the least
   degree such that $\dim(R_t) \ge j$.

\begin{theorem}[Noncontainment and Containment Criteria]\label{BHcontainmentcrits}
   Let $0\ne I \subseteq R=k[\P^N]$ be a homogeneous ideal.
   \begin{itemize}
      \item[(a)] Non-containment Criterion: If $\alpha(I^{(m)})< \alpha(I^r)$, then $I^{(m)}\not\subseteq I^r$.
      \item[(b)] Containment Criterion: Assume $\codim(I)=N$.
         If $r\reg(I)\le \alpha(I^{(m)})$, then $I^{(m)}\subseteq I^r$.
   \end{itemize}
\end{theorem}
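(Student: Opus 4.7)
The plan is to handle the two parts separately. Part~(a) is immediate by contrapositive: if $I^{(m)}\subseteq I^r$, then any nonzero homogeneous element of $I^{(m)}$ of minimal degree $\alpha(I^{(m)})$ also lies in $I^r$, and hence has degree at least $\alpha(I^r)$, yielding $\alpha(I^{(m)})\ge\alpha(I^r)$ against the hypothesis.

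For part~(b), my strategy is to bracket $I^{(m)}$ between $I^r$ and the $M$-saturation $(I^r)^{\mathrm{sat}}$ and then use the regularity hypothesis to conclude that these agree in the relevant degrees. First I would observe that $m\ge r$: combining $\alpha(I^{(m)})\le\alpha(I^m)=m\alpha(I)\le m\reg(I)$ with the assumption $r\reg(I)\le\alpha(I^{(m)})$ forces $r\le m$. Next, since $\codim(I)=N$ the radical of $I$ defines a zero-dimensional subscheme of $\P^N$, and a standard result on associated primes of powers of such ideals gives $\Ass(I^r)\subseteq\Ass(I)\cup\{M\}$. Removing the (possibly present) $M$-primary component of $I^r$ therefore leaves precisely the symbolic power $I^{(r)}$, i.e.\ $(I^r)^{\mathrm{sat}}=I^{(r)}$. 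Combined with the monotonicity $I^{(m)}\subseteq I^{(r)}$, this gives the sandwich $I^r\subseteq(I^r)^{\mathrm{sat}}\supseteq I^{(m)}$.

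The final step is to convert saturation-membership into genuine membership in $I^r$. For this I would invoke two facts: (i)~for any homogeneous ideal $J\subseteq R$, the saturation $J^{\mathrm{sat}}$ and $J$ coincide in every degree $d\ge\reg(J)$, which follows from $J^{\mathrm{sat}}/J\cong H^0_M(R/J)$ and the bound on local cohomology degrees encoded in the Castelnuovo--Mumford regularity; and (ii)~for codimension-$N$ ideals one has the regularity bound $\reg(I^r)\le r\reg(I)$, which can be verified from the Hilbert-function description of regularity recalled just before the theorem. Granting these, any nonzero homogeneous $F\in I^{(m)}$ satisfies $\deg F\ge\alpha(I^{(m)})\ge r\reg(I)\ge\reg(I^r)$ together with $F\in(I^r)^{\mathrm{sat}}$, so $F\in I^r$, completing part~(b).

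The main obstacle is the regularity inequality $\reg(I^r)\le r\reg(I)$. For arbitrary homogeneous ideals this can fail, but in the codimension-$N$ (zero-dimensional subscheme) case the short exact sequences $0\to I^k\to I^{k-1}\to I^{k-1}/I^k\to 0$, together with the Cohen--Macaulayness of $R/I$, force the Hilbert function of $R/I^r$ to reach its stable value by degree $r\reg(I)$; an induction on $r$ then delivers the claimed bound.
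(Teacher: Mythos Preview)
Your argument for part~(a) is correct and is exactly the obvious one. For part~(b), the paper itself gives no proof but simply cites \cite[Lemma~2.3.4]{BHrefBH}; your outline is in fact the standard argument behind that lemma, so the approaches coincide. The chain $I^{(m)}\subseteq I^{(r)}=(I^r)^{\mathrm{sat}}$ (using that all associated primes of $I$ other than $M$ have height $N$, so the only possible embedded prime of $I^r$ is $M$) together with the fact that $J$ and $J^{\mathrm{sat}}$ agree in degrees $\ge\reg(J)$ is exactly right.

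The one genuine soft spot is your justification of $\reg(I^r)\le r\,\reg(I)$. This inequality is true in the situation at hand (it is the theorem of Geramita--Gimigliano--Pitteloud and of Chandler for ideals with $\dim R/I\le 1$), and it is precisely what the Bocci--Harbourne proof invokes. But your sketched induction via the exact sequences $0\to I^k\to I^{k-1}\to I^{k-1}/I^k\to 0$ and ``Cohen--Macaulayness of $R/I$'' does not constitute a proof: controlling $\reg(I^{k-1}/I^k)$ is the entire difficulty, and Cohen--Macaulayness of $R/I$ alone (which, incidentally, requires $I$ to be saturated) does not hand you the needed bound on these graded pieces. You should either cite the Chandler/GGP result directly or reproduce their argument, which is more delicate than a one-line induction. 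A minor additional point: under the paper's literal definition, $\codim(I)=N$ only says the \emph{maximum} height of an associated prime other than $M$ is $N$, not that all of them have height $N$; your reduction to ``the radical of $I$ defines a zero-dimensional subscheme'' tacitly assumes the latter, which is the intended hypothesis in the applications but is worth stating explicitly.
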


\begin{proof}
   (a) This is  \cite[Lemma 2.3.2(a)]{BHrefBH}.

   (b) This is  \cite[Lemma 2.3.4]{BHrefBH}.
\end{proof}

In order to verify
   that Question \ref{BHCraigques} has an affirmative answer
   for the case $I=I(Z(2,s,2))$, and that
   $I^{(m)}\subseteq I^r$ holds whenever $m\ge Nr-N+1$ for $I=I(Z(N,s,N))$, we will apply
   Theorem \ref{BHcontainmentcrits}. To do so, we need the following
   numerical results.

\begin{lemma}[Some numerics]\label{BHnumlem} Let $I= I(Z(N,s,e))\subset R=k[\P^N]$.
   \begin{itemize}
      \item[(a)] Then $\alpha(I)=s-e+1$; if $e=N$, then $\alpha(I)=\reg(I)=s-N+1$.
      \item[(b)] If $e|m$, then $\alpha(I^{(m)})=ms/e$.
      \item[(c)] Let $m=iN+j$, where $i\ge 0$ and $0<j\le N$, and let $I=I(Z(N,s,N))$
   where $s>N\ge 1$. Then $\alpha(I^{(m)})=(i+1)s-N+j$.
   \end{itemize}
\end{lemma}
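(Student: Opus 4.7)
My plan is to exhibit explicit forms realizing the upper bounds, and to match them with lower bounds obtained by restriction to generic linear subspaces combined with Bezout on specially chosen lines; the three parts are treated in order (a), (b), (c), with (b) reducing to a special case of (c) once that is available. In (a), the bound $\alpha(I)\le s-e+1$ is witnessed by $L_1L_2\cdots L_{s-e+1}$, which lies in every prime $(L_i:i\in S)=I(Z_S)$ by pigeonhole ($|S|+(s-e+1)=s+1>s$); for the matching lower bound I first reduce to $e=N$ by restricting to a generic $e$-plane $\Pi\cong\P^e\subset\P^N$ (so that $Z(N,s,e)\cap\Pi=Z(e,s,e)$ and restriction preserves degree), and in the case $e=N$ I use the line $\ell=H_{i_1}\cap\cdots\cap H_{i_{N-1}}$, which meets $Z(N,s,N)$ transversally in the $s-N+1$ points $\ell\cap H_k$ for $k\notin\{i_1,\ldots,i_{N-1}\}$: when $F|_\ell\not\equiv 0$, Bezout on $\ell$ forces $\deg F\ge s-N+1$, and the residual case is concluded by induction on $s$ after extracting a factor from $(L_{i_1},\ldots,L_{i_{N-1}})$. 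The regularity identity $\reg I=s-N+1$ in the $e=N$ case is read off from the Eagon--Northcott-type minimal free resolution of the ideal of a star configuration of points. For (b), the upper bound is witnessed by $(L_1\cdots L_s)^{m/e}$, of degree $sm/e$ and vanishing to order exactly $e\cdot(m/e)=m$ along each $Z_S$ since precisely $e$ of the $L_i$ cut out $Z_S$; the lower bound again restricts to a generic $\Pi\cong\P^e$, where $I^{(m)}$ restricts into $I(Z(e,s,e))^{(m)}$ without decreasing degree, and part (c) applied with the role of $N$ played by $e$ and $j=e$ gives $\alpha(I^{(m)})\ge sm/e$.

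For part (c), the upper bound is witnessed by
$$
F_m\ :=\ (L_1L_2\cdots L_s)^i\cdot(L_1L_2\cdots L_{s-N+j}),\qquad m=iN+j,\ 0<j\le N,
$$
of degree $is+(s-N+j)=(i+1)s-N+j$. At any $p_S$ with $|S|=N$, the first factor contributes vanishing order $iN$, and the second contributes at least $j$, because the complement of $\{1,\ldots,s-N+j\}$ in $\{1,\ldots,s\}$ has only $N-j$ elements, forcing $|S\cap\{1,\ldots,s-N+j\}|\ge|S|-(N-j)=j$. For the lower bound I would use the primary decomposition $I^{(m)}=\bigcap_{|S|=N}(L_i:i\in S)^m$ -- valid because each prime $(L_i:i\in S)$ is generated by linear forms and hence coincides with its symbolic powers -- to argue that $I^{(m)}$ is generated by the $L_i$-monomials $L_1^{a_1}\cdots L_s^{a_s}$ with $a_k\in\Z_{\ge0}$ subject to the covering conditions $\sum_{k\in S}a_k\ge m$ for every $|S|=N$. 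A direct pigeonhole/symmetrization then identifies the minimum of $\sum_k a_k$ over these constraints as the balanced vector with $N-j$ coordinates equal to $i$ and $s-N+j$ coordinates equal to $i+1$, of total $(i+1)s-N+j$.

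The main obstacle, and the step I expect to require the most care, is justifying that $I^{(m)}$ is in fact generated by these $L_i$-monomials (or at least that $\alpha(I^{(m)})$ is computed by one): arbitrary elements of $\bigcap_{|S|=N}(L_i:i\in S)^m$ need not a priori be monomial combinations of the $L_i$. I plan to handle this either through a Gr\"obner basis argument with respect to a term order adapted to the coordinate change that brings the $L_i$ to standard variables, or via the explicit minimal free resolution of $I$ as a star configuration ideal, lifted to control the symbolic powers. Once this combinatorial reduction is in hand, the resulting integer linear program is elementary and pins down the claimed lower bound $(i+1)s-N+j$.
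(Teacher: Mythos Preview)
The paper does not actually prove this lemma: it cites \cite{BHrefBH} for (a) and (b) and an arXiv preprint for (c), noting only that the case $N=2$ of (c) ``follows easily by using induction and B\'ezout's theorem.'' So your proposal goes well beyond what the paper itself offers. Your upper bounds in all three parts are correct and clean, and the reduction of (b) to (c) via restriction to a generic $e$-plane is fine.

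Two steps need repair. In the lower bound of (a), when $F|_\ell\equiv 0$ you correctly obtain $F\in(L_{i_1},\dots,L_{i_{N-1}})$, but ``extracting a factor'' is not justified: membership in this ideal does not force any single $L_{i_k}$ to divide $F$, so there is nothing to peel off and no obvious way to decrease $s$. A cleaner induction goes hyperplane by hyperplane: if $L_s\mid F$ then $F/L_s\in I(Z(N,s-1,N))$ (since $L_s$ is a unit at every $p_S$ with $s\notin S$) and you recurse on $s$; otherwise $0\ne F|_{H_s}\in I(Z(N-1,s-1,N-1))$ and you recurse on $N$.

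The more serious issue is in (c). Your proposed Gr\"obner argument cannot work as stated: since $s>N+1$, no change of coordinates in $R=k[x_0,\dots,x_N]$ turns all the $L_i$ into variables, so there is no term order on $R$ for which the $L$-monomials become honest monomials. The statement that $I^{(m)}$ is generated by $L$-monomials is true for star configurations, but its proof genuinely uses the genericity of the hyperplanes (e.g.\ via lifting from $k[y_1,\dots,y_s]$ and showing $\bigcap_S\big((y_i:i\in S)^m+K\big)=\big(\bigcap_S(y_i:i\in S)^m\big)+K$ for a generic linear kernel $K$); it is not a routine computation. You can sidestep this entirely by the B\'ezout-and-peel approach the paper alludes to. For $N=2$: if $\deg F<m(s-1)$ then B\'ezout with each $H_k$ forces $L_1\cdots L_s\mid F$, and $F/(L_1\cdots L_s)\in I^{(m-2)}$, giving $\deg F\ge s+\alpha(I^{(m-2)})$ and closing the induction on $m$; if $\deg F\ge m(s-1)$ the bound $(i+1)s-2+j$ follows directly. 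The same scheme, peeling off $L_1\cdots L_s$ to drop $m$ by $N$, extends to general $N$ and avoids the monomial-generation question altogether.
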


\begin{proof}
   (a) This holds by  \cite[Lemma 2.4.2]{BHrefBH}.

   (b) This holds by  \cite[Lemma 2.4.1]{BHrefBH}.

   (c) See Lemma 8.4.5 and Proposition 8.5.3 of version 1 of
   ArKiv0810.0728 for detailed proofs. (We note that the case $N=2$
   follows easily by using induction and B\'ezout's theorem.)
\end{proof}

\begin{example}[Additional supporting evidence]\label{BHPNstarconfig}\rm
Let $I=I(Z(N,s,N))$.
By Lemma \ref{BHnumlem}(a), $\reg(I)=s-N+1$ and
by Lemma \ref{BHnumlem}(c),
   $\alpha(I^{(m)})=(i+1)s-N+j$, where $m=iN+j$, $i\ge 0$ and $0<j\le
   N$. Thus, if $m=rN-(N-1)=(r-1)N+1$, then $\alpha(I^{(m)})=rs-N+1\geq
r(s-N+1)=r\reg(I)$, and hence
   $I^{(Nr-(N-1))}\subseteq I^r$ by Theorem
   \ref{BHcontainmentcrits}(b).
   Thus Conjecture \ref{BHPNconj} holds for $I=I(Z(N,s,N))$.
   Moreover, when $r=N=2$, we have $I^{(3)}\subseteq I^2$, which shows
   that Question \ref{BHCraigques} has an affirmative answer
   for $I=I(Z(2,s,2))$.
\end{example}

\noindent In our remaining two examples, concerning generic points in projective space,
Seshadri constants play a key role.

\begin{example}[Generic points in $\P^2$]\label{BHgenP2ex}\rm
By  \cite[Theorem 4.1]{BHrefBH}, Huneke's question again has an
affirmative answer if $I$ is the ideal of generic points $p_1,\dots,p_j\in\P^2$.
More generally, by \cite[Remark 4.3]{BHrefBH} we have $\rho(I)<3/2$ when
$I$ is the ideal of a finite set of generic points in $\P^2$. It follows that
$I^{(m)}\subseteq I^r$ whenever $m/r\ge 3/2$. Since $m\ge 2r-1$ implies
that either $m/r\ge 3/2$ or $r=1$, Conjecture \ref{BHPNconj}
is true in the case $N=2$ and $I$ is the ideal of generic points in the plane.

The proof that $\rho(I)<3/2$ depends on using estimates for multipoint Seshadri
constants to handle the case that $j$ is large. The few remaining cases
of small $j$ are then handled {\it ad hoc}. We now describe this argument
for large $j$ in more detail.
   Let $I=I(Z)$, where $Z=p_1+\cdots+p_j$ for distinct generic points
   $p_i\in \P^2$. By  \cite[Corollary 2.3.5]{BHrefBH} we have
   $\rho(I)\le \reg(I)/\gamma(I)$. If $j\gg0$, we wish to show
   that $I^{(m)}\subseteq I^r$ for all $m\ge 2r-1$. The proof depends
   on estimating $\eps(2,Z)$ and $\reg(I)$, and then using
   $\gamma(I)=j\cdot\eps(2,Z)$ from Remark \ref{BHscrem} and $\rho(I)\le
   \reg(I)/\gamma(I)$.

   To estimate $\reg(I)$, given that
   $\reg(I)=t+1$ where $t$ is the least degree such that
   $\dim(R_t) \ge j$, use the fact that
   $\dim(R_t)=\binom{t+2}{2}$. It is now not hard to check that
   $\reg(I)\le \sqrt{2j+(1/4)}+(1/2)$ for $j\gg0$. We also have
   $j\cdot\eps(2,Z)\ge \sqrt{j-1}$ for $j\ge 10$ (for characteristic
   0, see \cite{BHrefX}; see the proof of  \cite[Theorem 4.2]{BHrefBH} in
   general).

   Thus for $j\gg0$ we have
   $$\rho(I)\le\reg(I)/\gamma(I) \le (\sqrt{2j+(1/4)}+(1/2))/\sqrt{j-1}\;;
   $$
   for large $j$ this is close to $\sqrt{2}$ and thus less than $3/2$. But
   $m\ge 2r-1$ implies $m/r\ge 3/2>\rho(I)$ for all $r>1$. Thus
   $I^{(m)}\subseteq I^r$ for $r>1$. If $r=1$, then we also have
   $I^{(m)}\subseteq I^{(1)}=I=I^r$.
\end{example}
   Finally, we show that $I^{(Nr-(N-1))}\subseteq I^r$ holds for
   $j\gg0$ if $I=I(Z)$, where $Z=p_1+\cdots+p_j$ for distinct generic
   points $p_i\in \P^N$. The argument is modeled on that used in
   Example \ref{BHgenP2ex}.

\begin{example}[Generic points in $\P^N$]\label{BHgenptsinPN}\rm
Let $I=I(Z)$, where $Z=p_1+\cdots+p_j$ for distinct generic points
$p_i\in {\bf P}^N$. To show $I^{(Nr-(N-1))}\subseteq I^r$, since the
case $r=1$ is clear, it is enough to consider $r\ge2$. As in Example
\ref{BHgenP2ex} $\rho(I)\le
   \reg(I)/\gamma(I)$, so it suffices to show
$\reg(I)/(j(\varepsilon(N,Z))^{N-1})<(rN-(N-1))/r$ for
$j\gg0$, and since $(rN-(N-1))/r$ is least for $r=2$, it is enough
to verify this for $r=2$. To estimate $\reg(I)$, use the facts
that $\hbox{dim}(R_t)=\binom{t+N}{N}$ and $\reg(I)=t+1$, where
$t$ is the least nonnegative integer such that
$j\le \binom{t+N}{N}$. Since $j=(\root{N}\of{N!j})^N/(N!)\le
(x+N)\cdots(x+1)/(N!)$ for $x=\root{N}\of{N!j}-1$,
we see $t\leq \lceil \root{N}\of{N!j}-1 \rceil\leq \root{N}\of{N!j}$
and hence
$\reg(I)\le \root{N}\of{N!j}+1$. Next, for $j\gg0$, we have
$$\frac{j-1}{j}\frac{1}{\root{N}\of{j-1}}=\frac{\root{N}\of{(j-1)^{N-1}}}{j}\leq\eps_{1}(\P^N,\calo(1);Z)$$
by Theorem 1.1 \cite{BHrefK} and
$$\eps_{1}(\P^N,\calo(1);Z)\le \varepsilon(N,Z)$$
by Proposition \ref{SCrltn} (although Proposition \ref{SCrltn}
is stated only for the case $j=1$ of a single point, the proof (see Proposition 5.1.9 \cite{PAG}) carries over for any
$j$). Thus
$$\frac{j-1}{j}\frac{1}{\root{N}\of{j-1}}\le \varepsilon(N,Z)$$
and hence
$$\Big(\frac{j-1}{j}\Big)^{N-2}\root{N}\of{j-1}=
\Big(\frac{j-1}{j}\Big)^{N-1}\frac{j}{{\root{N}\of{j-1}}^{N-1}}\le
j(\varepsilon(N,Z))^{N-1}$$ so
$$\frac{\reg(I)}{j(\varepsilon(N,Z))^{N-1}}\le \frac{\root{N}\of{N!j}+1}{\Big(\frac{j-1}{j}\Big)^{N-2}\root{N}\of{j-1}}$$
for $j\gg0$. But
$$\lim_{j\to\infty}\frac{\root{N}\of{N!j}+1}{\Big(\frac{j-1}{j}\Big)^{N-2}\root{N}\of{j-1}}=\root{N}\of{N!}$$
so
$$\frac{\reg(I)}{j(\varepsilon(N,Z))^{N-1}}<\frac{N+1}{2}$$
follows for $j\gg0$ if we have
$$\root{N}\of{N!}<\frac{N+1}{2},$$
and this is equivalent to $2^NN!<(N+1)^N$. This last is true for
$N=2$, and if it is true for some $N\ge2$, then it holds for $N+1$
(and hence for all $N\ge2$ by induction) if
$$2(N+1)\le (N+2)((N+2)/(N+1))^N,$$
since then
$$2^{N+1}(N+1)!=2(N+1)2^NN!<(N+2)((N+2)/(N+1))^N(N+1)^{N}=(N+2)^{N+1}.$$
But taking $n=N+1$, we can rewrite $2(N+1)\le (N+2)((N+2)/(N+1))^N$
as $2\le
((N+2)/(N+1))^{N+1}=(1+\frac{1}{n})^n=1^n+\binom{n}{1}\frac{1}{n}+\binom{n}{2}\frac{1}{n^2}+\cdots$,
which is obvious.
\end{example}

\endgroup

%*****************************************************************************

%*****************************************************************************

\bibliographystyle{amsalpha}

\end{document}